 \newtheorem{Proposition}{Proposition}
\begin{document}


\title{A mathematical model of CAR-T cell therapy in combination with chemotherapy for malignant gliomas} 
\author{Dmitry Sinelshchikov}
\email{dmitry.sinelshchikov@ehu.eus.}
\affiliation{Instituto Biofisika (UPV/EHU, CSIC), University of the Basque Country, Leioa, 48940, Spain}
 \affiliation{Ikerbasque, Basque Foundation for Science, Bilbao 48009, Spain}
\author{Juan Belmonte-Beitia}%
 \email{juan.belmonte@uclm.es.}
 \author{Matteo Italia}
 \email{matteo.italia@uclm.es.}
\affiliation{
Mathematical Oncology Laboratory (MOLAB), Departament of Mathematics, Instituto de Matem\'atica Aplicada a la Ciencia y la Ingenier\'ia,\\ Escuela T\'ecnica Superior de Ingenier\'ia Industrial,
Universidad de Castilla-La Mancha.\\
Laboratorio de Oncolog\'ia Matem\'atica, Instituto de Investigaci\'on Sanitaria de Castilla-La Mancha (IDISCAM),\\
Ciudad Real 13071, Spain.
}%


\begin{abstract}
{
Malignant gliomas (MG) are among the most aggressive primary brain tumors, characterized by a high degree of resistance to therapy and poor prognosis. In this work, we develop a mathematical model to investigate the dynamics of MG under the combined effects of chemotherapy and chimeric antigen receptor (CAR-T) cell therapy. The proposed model is a five-dimensional dynamical system incorporating impulsive inputs that correspond to the clinical administration of chemotherapy and immunotherapy. We demonstrate the non-negativity of solutions for non-negative initial conditions, ensuring the biological relevance of the model.
\\
We show that if we apply both therapies only once, the trajectories are attracted to an invariant surface corresponding to the tumor carrying capacity.
Conversely, under constant administration of both treatments, we identify parameter ranges in which tumor eradication is achievable.
Furthermore, we numerically study various treatment combinations to determine optimal protocols at the population level.
To this end, we generate a cohort of $10^4$ virtual patients with model parameters sampled uniformly within clinically relevant ranges and carry out \textit{in silico} trials. Our findings indicate that tumor growth rate, chemotherapy efficacy, and tumor-induced immunosuppression are the key determinants of survival outcomes.
\\
We believe that our results provide new theoretical insights into treatment optimization and offer a framework for refining the design of clinical trials for MG therapies.}


\end{abstract}

\pacs{}

\maketitle 

\begin{quotation}
Malignant gliomas (MG), the most common primary brain tumors, are characterized by high invasiveness and resistance to therapy, with a median survival of less than 15 months despite standard multimodal treatments like the Stupp protocol. Novel Chimeric Antigen Receptor (CAR)-T cell therapies have shown revolutionary potential in oncology but face significant challenges in solid tumors like MG, including antigen heterogeneity, immunosuppressive tumor microenvironment, and blood-brain barrier limitations. Recent advances, such as CARs targeting multiple antigens, aim to overcome these obstacles.

Chemotherapy, such as temozolomide (TMZ), has opposite effects on CAR-T cells: while it directly kills CAR-T cells, it can also enhance their efficacy. Chemotherapy can upregulate tumor antigen expression, improving CAR-T cell targeting, and lymphodepleting regimens can create a more favorable immune microenvironment to support CAR-T cell activity. Preclinical studies and ongoing clinical trials suggest that this combination may provide synergistic benefits in MG treatment. To explore this, we develop a mathematical model to analyze the dynamics of combined therapy and identify optimal treatment strategies using analytical and numerical methods.

Our mathematical framework, under constant treatment for both therapies, identifies feasible critical thresholds for TMZ and CAR-T cell doses that could lead to tumor eradication. {Due to} patient toxicological constraints, we then simulate several clinically feasible therapeutic protocols with {a} {finite number of} administrations. We first study the applications of TMZ and CAR-T cells as monotherapy. Our model highlights the critical role of tumor immunosuppression, which can limit the efficiency of CAR-T cells. In general, our results are in agreement with clinical data, demonstrating the robustness and predictive value of our model.

We also
explore various combinations of TMZ and CAR-T cell treatments to identify the most effective therapeutic strategy.  We compare outcomes across protocols and investigate the impact of the parameters on the survival of virtual patients. Our results show the key roles of tumor proliferation, TMZ killing efficacy, and tumor immunosuppression as survival prognostic biomarkers.
Notably, an alternating schedule of CAR-T cell therapy and TMZ yields the highest overall survival in \emph{in silico} trials, with a median survival of nearly 650 days.
Our results provide theoretical insights to guide the design of clinical trials for MG therapies.
\end{quotation}

\section{Introduction}

Gliomas are the most common type of primary brain tumor, with glioblastomas (GBM) representing the most aggressive and therapeutically challenging subtype due to their highly infiltrative nature, which prevents complete eradication by current treatments \cite{gliomas2024}. Patients with gliomas typically succumb to complications arising from tumor progression. The current standard of care for aggressive MG such as GBM, known as the Stupp protocol \cite{stupp_protocl2005}, combines surgical resection with chemoradiotherapy using temozolomide (TMZ) as chemotherapy. Despite this multimodal approach, the prognosis remains poor, with a median overall survival of less than 15 months \cite{stupp_protocl2005}.

To address the fatal prognosis associated with gliomas, new therapies and combinations of innovative and traditional approaches are under investigation. Among these, Chimeric Antigen Receptor (CAR)-T cell therapy has emerged as one of the most promising and revolutionary cancer treatments in recent decades. For recent reviews on the applications of CAR-T therapy to gliomas, see \cite{cart_review_MG,review_cart_diffuse_midline_glioma,montoya2024roadmapCART,neurooncoReview2024}.

CAR-T cells are genetically modified T cells, derived either autologously (from the patient) or allogeneically (from a donor). Their extracellular domain is engineered to recognize specific tumor-associated antigens, while the intracellular domain contains signaling elements that trigger T-cell activation. Upon CAR engagement with the associated antigen, primary T-cell activation occurs, leading to cytokine release, cytolytic degranulation, target cell death, and T-cell proliferation \cite{Feins}.

CAR-T cells targeting CD19+ cells have shown remarkable success in treating B-cell malignancies, particularly in patients with acute lymphoblastic leukemia \cite{Maude,Miliotou}. Similarly, positive outcomes have been achieved in multiple myeloma \cite{Dagostino}, diffuse large B-cell lymphoma \cite{Chavez}, and refractory acute myeloid leukemia using CD33-specific CAR-T cells \cite{Wang}. These encouraging results lead to extensive research on the application of CAR-T therapies to solid tumors \cite{Moon}, with ongoing clinical trials investigating their potential in glioblastomas, as well as gastrointestinal, genitourinary, breast, and lung cancers, among others \cite{Bagley}.
Despite this progress, CAR-T therapy for solid tumors
 continues to face significant challenges.
The primary challenge lies in identifying tumor-specific antigens that are expressed exclusively in cancer cells to minimize on-target off-tumor toxicity \cite{Watanabe,kringel2023chimeric}.
Another critical consideration is ensuring that the antigens selected for therapy are humanized to prevent the development of neutralizing antibodies against CAR-T cells \cite{Hege}. Beyond antigen targeting, several additional challenges remain, including the persistence and expansion of CAR-T cells, their ability to effectively infiltrate and traffic
within
tumors, and the impact of tumor-driven immune resistance mechanisms, all of which can significantly
compromise CAR-T cell efficacy \cite{Ma,kringel2023chimeric}.

For these reasons, it is crucial to develop strategies to improve and optimize the effectiveness of CAR-T cell therapy \cite{Hong}.
This task is particularly challenging in the context of brain tumors, which are highly heterogeneous: tumor cells do not express the same antigens uniformly \cite{Bodnar2023amcs}.
Such heterogeneity limits the efficacy of CAR-T therapies, as not all tumor cells may be targeted. To address this limitation,
novel dual-specific tandem CAR-T cells have shown promising \textit{in vitro} efficacy in overcoming antigen specificity challenges \cite{tandemCART}. These CAR-T cells are engineered to simultaneously target two well-characterized glioblastoma antigens, EGFRvIII and IL-13R$\alpha$2, which are frequently expressed in GBM cells but completely absent in normal brain tissue. Moreover, trivalent CAR-T cells capable of targeting three antigens simultaneously offer promising solutions to this issue, potentially covering a broader spectrum of GBM cells \cite{trivalentCART}.

A promising approach involves combining CAR-T therapy with chemotherapy.
This strategy could leverage chemotherapy to eliminate tumor cells that do not express the antigens required for CAR-T cell recognition and attack. Chemotherapy has been shown to enhance the expression of tumor antigens in solid tumors, thereby amplifying the immune response. This effect has been demonstrated in GBM models using NKG2D CAR-T cells, where chemotherapy improved CAR-T cell efficacy by increasing target antigen availability \cite{chemo_increase_antigen}. Subsequently, CAR-T cells could be administered to target residual antigen-expressing tumor cells, providing a more comprehensive and effective treatment approach.
In addition, a key problem with chemotherapy is the development of drug resistance, often leading to treatment failure. In this context, combining
CAR-T cell therapy with chemotherapy could help circumvent this problem, as CAR-T cells would target and eliminate chemotherapy-resistant cells. Furthermore, gliomas present significant barriers to effective therapy, including immunosuppressive tumor microenvironments and the restrictive blood-brain barrier \cite{kringel2023chimeric}. Lymphodepleting chemotherapy, which
reduces
the host's immune cell population to create space for CAR-T cells and other therapeutic agents, is under investigation as a means to overcome these challenges \cite{lymphodepleting_chemio}. By weakening immune suppression and facilitating  CAR-T cell trafficking to the tumor site, this approach has the potential to improve therapeutic efficacy.

Although preclinical studies have shown the potential of such combination approaches in animal models \cite{suryadevara2018temozolomide,combined_CART_TMZ},
their effectiveness in clinical practice is
currently under investigation in an ongoing trial (NCT04165941) \cite{TMZ_CART_trial}.
At this stage, mathematical models can help describe, understand, and predict the effects of this combined therapy on tumor progression \cite{Altrock,italia2025silico}.
\textit{In silico} trials use virtual simulations to model such experiments, offering a powerful tool in mathematical oncology \cite{gevertz2024assessing, wang2024virtual}. These computational approaches support clinical trials by reducing the risk of failure through mechanistic and predictive simulations \cite{brown2022derisking}.

Mathematical approaches are always playing a more crucial role in medicine, providing powerful tools to unravel the complexities of diseases. By formulating models and analyzing data, researchers have gained deeper insights into disease mechanisms \cite{italia2022calibrated}, optimal \cite{italia2022optimal} and personalized \cite{italia2023mathematical} treatments, and effective healthcare management \cite{italia2023model}. In oncology,
mathematical models, which serve to describe, quantify and predict complex behaviors, have significant potential to optimize administration protocols, deepen understanding of therapeutic dynamics, and help design clinical trials \cite{Altrock}.
These models can unravel intricate systems, such as those involving interactions between immune systems, tumor cells, healthy cells, and treatments, and have been widely used for this purpose in recent years \cite{Jose1,italia2022ecc,Jose2}.
Recent mathematical modeling studies have investigated various aspects of CAR-T cell therapies \cite{Sahoo, kimmel21,barros20,Marciniak,Ode1,Victor,SSerrano,sabir2025mathematical}.

In this paper, we employ an ordinary differential equation mathematical model to study the response of MG to the combination of CAR-T therapy and chemotherapy. As chemotherapeutic agent, we take temozolamide (TMZ), which is the standard chemotherapeutic agent for this kind of tumors \cite{friedman2000temozolomide,stupp_protocl2005}. We make a theoretical study of the model, showing the existence of steady states, stability and positivity of solutions, and calculating invariant surfaces of the mathematical model. We consider two kinds of treatments, constant and with a finite number of applications, and we study the insights of these treatments from the mathematical and medical points of view, performing numerical simulations.

\begin{figure}[h!]
    \centering
    \includegraphics[width=0.95\linewidth]{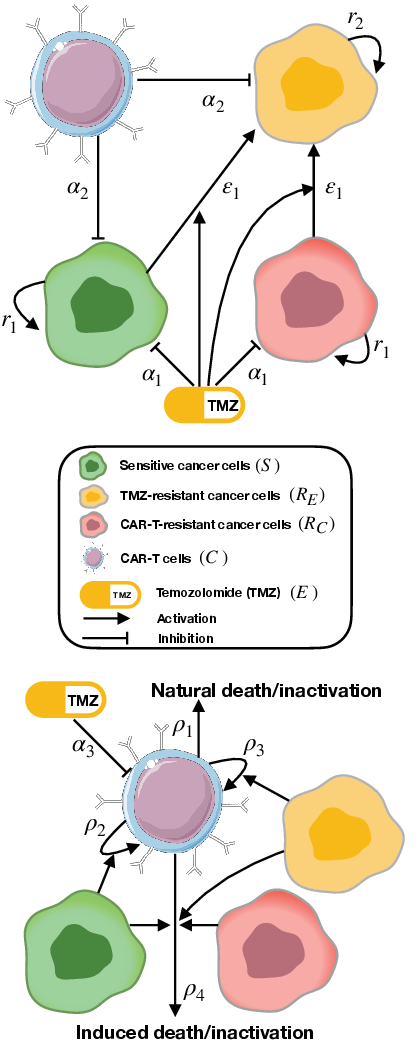}

    \caption{Graphical representation of the model variables and their interactions, focusing on the cancer cells dynamics and how they are affected by the treatments (top), and on the CAR-T cells dynamics and how it is regulated by cancer cells and TMZ (bottom).}
    \label{fig:model}
\end{figure}

The paper is organized as follows. In Section~\ref{sec:model}, we introduce the biological background and present the mathematical model, explaining its variables and parameters.
We also provide a brief background on the mathematical analysis that will be used throughout the manuscript, along with the definition of a virtual patient and \emph{in silico} trials to give the reader context.
Next, Section~\ref{sec:result} presents the results in two parts. The first part details the theoretical results obtained from the mathematical analysis carried out, using the qualitative theory of differential equations and analyzing the asymptotic dynamics of our model. In the second part, we present numerical results for the applications of TMZ monotherapy, CAR-T cell monotherapy, and finally, the combined treatment of CAR-T and TMZ. Lastly, Section~\ref{sec:conclusions} discusses the work, its results, and the insights gained.

\section{Materials and methods}\label{sec:model}

In this Section, we first present the biological knowledge underlying the mathematical model in Section \ref{sec:bio_back}, followed by a detailed description of the model equations, parameters, and simulation settings in Section \ref{subsec:model}. Next, Section \ref{sec:math_back} outlines the mathematical background {required for the subsequent} analysis. Finally, in Section \ref{sec:vp_insilico_trials}, we introduce the frameworks for virtual patients and \textit{in silico} trials.

\subsection{Biological background}\label{sec:bio_back}

In this study, we develop a mathematical model of MG cells, categorizing them as either sensitive or resistant to TMZ and CAR-T cell therapy. We construct an {ODE-based} model to describe the growth of MG and its response to combined TMZ and CAR-T cell therapies. The model is based on a compartmental framework that captures the temporal evolution of several homogeneous tumor cell subpopulations and their responses to TMZ and CAR-T. Specifically, it includes subpopulations of sensitive ($S$), resistant to CAR-T cell therapy but sensitive to TMZ ($R_C$), and resistant to TMZ but sensitive to CAR-T ($R_E$) cells. For simplicity, spatial dependencies and stochastic effects are not considered in this model. In the following, we address the biological background on which the model is based.

We assume that sensitive cells ($S$) have a proliferative phenotype
and are sensitive to both chemotherapy \cite{chemodamage} and CAR-T therapy \cite{Feins}. Note that CAR-T-sensitive cells express the antigen(s) recognized by CAR-T cells, enabling targeted destruction, while resistant cells lack this expression and evade immune-mediated killing \cite{Bodnar2023amcs}. Some CAR-T cell antigens are related to MG growth and TMZ resistance \cite{tandemCART,trivalentCART} and cells that do not express these antigens could grow slower and be more sensitive to TMZ. However, for the sake of simplicity, we assume that CAR-T-resistant cells ($R_C$) exhibit the same proliferative phenotype and TMZ sensitivity as sensitive cells ($S$).

TMZ resistance is known to be induced in TMZ sensitive cells ($S$ and $R_C$) upon exposure to TMZ \cite{rabe2020identification}. Indeed, the model incorporates sensitivity and resistance to TMZ, the standard chemotherapeutic agent for gliomas \cite{friedman2000temozolomide,stupp_protocl2005}. Over time, a subset of tumor cells may remain sensitive and responsive to treatment, while others acquire resistance, reflecting the complex and dynamic nature of tumor heterogeneity under dual therapy \cite{rabe2020identification}. Different studies report TMZ-resistant cells proliferate at a slower, similar, and faster rate than sensitive cells \cite{campos2014aberrant,stepanenko2016temozolomide,yuan2018abt,gupta2014discordant,dai2018scd1,Delobel_PLOS}. Thus, we assume that TMZ-resistant cells ($R_E$) have a proliferative phenotype that may differ from TMZ-sensitive cells. We assume that TMZ-resistant cells express the antigen(s) recognized by CAR-T \cite{chemo_increase_antigen}, thus, these cells are sensitive (only) to CAR-T therapy and we do not include populations resistant to both TMZ and CAR-T therapy.

We also consider the dynamics of the applied treatments. TMZ decays exponentially between consecutive administrations following first-order pharmacokinetics \cite{agarwala2000temozolomide}, it acts only on TMZ-sensitive tumor cells and can induce resistance in these cells \cite{rabe2020identification}. Instead, CAR-T cells compete with tumor cells, become naturally inactivated, and attack only the tumor population in which tumor antigen(s) is expressed \cite{Brown2,Fonkoua}.

\subsection{The model}\label{subsec:model}

Here, we present the mathematical equations underlying the model, derived from biological assumptions. We also define the model parameters, initial and final conditions used in the numerical simulations, and describe how TMZ and CAR-T therapies are implemented within the simulation framework.

\subsubsection{Model equations}

According to what was previously mentioned, it follows that the variables involved in the model are:

\begin{itemize}
\item $S$ is the subpopulation of tumor cells sensitive to both CAR-T cell and TMZ treatment (unit: cells);
\item $R_C$ is the subpopulation of tumor cells resistant to the CAR-T treatment but sensitive to TMZ (unit: cells);
\item $R_E$ is the subpopulation of tumor cells resistant to TMZ but sensitive to CAR-T (unit: cells);
\item $C$ is the compartment of CAR-T cells (unit: cells);
\item $E$ is the normalized concentration of the drug, i.e., the efficacy of the TMZ (unit: {dimensionless});
\item $t$ represents the time (unit: days).
\end{itemize}

Figure~\ref{fig:model} provides a graphical representation of the model, illustrating the interactions among variables. The top panel focuses on the dynamics of cancer cells and the effects of treatments, while the bottom panel depicts the dynamics of CAR-T cells.

Then, the model is formulated as follows.
\begin{widetext}
\begin{eqnarray}
\label{eq:new_model}
\frac{dS}{dt}&=&r_1 S \left(1-\frac{S+R_{C}+R_{E}}{K}\right)-\alpha_1 E S -\varepsilon_{1} E S-\alpha_2 CS, \label{eqS} \\
\frac{dR_C}{dt}&=&r_1R_C\left(1-\frac{S+R_{C}+R_{E}}{K}\right)-\alpha_1E R_C-\varepsilon_{1} E R_C, \label{eqRC}  \\
\frac{dR_E}{dt}&=&r_2R_E\left(1-\frac{S+R_{C}+R_{E}}{K}\right)-\alpha_2CR_E+\varepsilon_1 (S+R_C)E, \label{eqRE} \\
\frac{dC}{dt}&=&-\rho_1C+\frac{\rho_2SC}{g_1+S} +\frac{\rho_3R_EC}{g_2+R_E}-\rho_{4}\frac{(S+R_{C}+R_{E})C}{g_{3}+C}-\alpha_3EC \label{eqC},\\
\frac{dE}{dt}&=&-\mu E \label{eqE}.
\end{eqnarray}
\end{widetext}

Equation \eqref{eqS} describes the dynamics of the sensitive tumor cell subpopulation ($S$). In the absence of treatments, these cells grow following a logistic model characterized by a {proliferation rate $r_1$} and a carrying capacity $K$ \cite{Gerlee,Budia}. TMZ kills sensitive cells at a rate $\alpha_1$, while simultaneously inducing resistance at a rate $\varepsilon_1$ \cite{Delobel_PLOS}. {This implies that, at a rate $\varepsilon_1$, sensitive tumor cells ($S$) interacting with TMZ ($E$) acquire resistance to the drug. Consequently, the term $\varepsilon_1 ES$ is removed from the first equation and incorporated into Equation \eqref{eqRE}, which governs the compartment of TMZ-resistant cells. While an intermediate transient step in the development of TMZ resistance was considered in \cite{Delobel_PLOS}, we adopt a simplified approach using the terms described above.}
Additionally, CAR-T cells target and eliminate sensitive cells at a rate $\alpha_2$ \cite{Ode2}, further modulating population dynamics.

\begin{table*}[]
\caption{Descriptions, values, and sources of the reference parameters for system  \eqref{eqS}--\eqref{eqE}.}
 \label{tab:paramenters}
\begin{center}
\begin{tabular}{@{}clllc@{}}
Parameter & Description & Reference value & Unit & References \\
\hline
$\rho_1$ & activated CAR-T cell mean lifetime in the tumor site & 1/7-1/30 & day$^{-1}$ & [\onlinecite{Ghorashian}]\\
${\rho}_2$ & mitotic stimulation of CAR-T cells by sensitive tumor cells & 0.2-0.9 & day${^{-1}}$ & [\onlinecite{Ode2}]\\
${\rho}_3$ & mitotic stimulation of CAR-T cells by {TMZ-}resistant tumor cells & 0.2-0.9 & day${^{-1}}$ &[\onlinecite{Ode2}]\\
${\rho}_4$ & tumor inactivation rate & 0.01-0.2 & day${^{-1}}$ &[\onlinecite{santurio2022mathematical}]\\
$r_1$ & sensitive tumor growth rate & 0.001-0.025 & day${^{-1}}$ & [\onlinecite{Ode2}]\\
$r_2$ & TMZ-resistant tumor growth rate & 0.0005-0.05 & day${^{-1}}$ & assumed\\
$g_1$ & T cells
for half-maximal CAR-T cell proliferation {due to $S$} & $1\times 10^{10}$ & cell & [\onlinecite{Ode2}]\\
{$g_2$} & T cells
for half-maximal CAR-T cell proliferation {due to $R_E$}& $1\times 10^{10}$ & cell & [\onlinecite{Ode2}]\\
{$g_3$} & CAR-T concentration for half-maximal tumor inactivation &  $2\times 10^9$ & cell & [\onlinecite{Ode2}]\\
$\alpha_1$ & Chemotherapy killing efficiency against tumor & 0.1-1 & day${^{-1}}$ & [\onlinecite{Delobel_PLOS}]\\
${\alpha}_2$ & CAR-T cells killing efficiency against tumor & $2.5\times 10^{-10}$ &
day${^{-1}}$cell${^{-1}}$ & [\onlinecite{Ode2}]\\
${\alpha}_3$ & Chemotherapy killing efficiency  against CAR-T & 0.1-1 & day${^{-1}}$ & assumed\\
${\epsilon}_1$ & transition rate from sensitive to TMZ-resistant cancer cells & $0.1-0.6$ & day${^{-1}}$ & [\onlinecite{Delobel_PLOS}]\\
${K}$ & tumor carrying capacity& $5\cdot 10^{12}$ & cell & [\onlinecite{Forys}] \\
$v$ & CAR-T cells dosage & $10^{7}-10^9$ & cells  & [\onlinecite{goff2019pilot}]\\
{$E_0$ }& TMZ normalized dosage & $0-1$ & -  & [\onlinecite{Delobel_PLOS}]\\
$\mu$ & TMZ clearance rate & $8.32$   &  day${^{-1}}$ & [\onlinecite{Delobel_PLOS}] \\
$\delta_{1}$ & initial fraction of resistant to TMZ cells & $0.0001 - 0.1$   &  - & assumed \\
$\delta_{2}$ & initial fraction of resistant to CAR-T cells & $0.1 - 0.5$   &  - & [\onlinecite{o2017single}]  \\
{$L_{1}$} & number of applied TMZ cycles & 10   &  - & [\onlinecite{stupp_protocl2005}] \\
{$L_{2}$ }& number of CAR-T cell injections & 2   &  - & assumed  \\
\end{tabular}
\end{center}
\end{table*}

Equation \eqref{eqRC} models the dynamics of CAR-T-resistant cells ($R_C$), following the same structure as Equation \eqref{eqS}, with the key difference that these tumor cells do not express the antigen(s) and are therefore unaffected by CAR-T cells (the term involving $\alpha_2$ is absent).
Note that the same biological resistance mechanism described in Equation \eqref{eqS} is also present here. Specifically, a fraction of $R_C$ cells in contact with TMZ acquires resistance at the same rate $\varepsilon_1$. Consequently, the term $\varepsilon_1 ER_C$ appears both here and in Equation \eqref{eqRE}.

Equation \eqref{eqRE} describes the dynamics of tumor cells resistant to the efficacy ($E$) of TMZ ($R_E$). In the absence of treatment, these cells ($R_E$) follow logistic growth, similar to sensitive cells ($S$), with the same carrying capacity $K$ and a proliferation rate $r_2$, which may differ from that of sensitive cells ($r_1$). CAR-T cells also target and eliminate these resistant cells at the rate $\alpha_2$, as $R_E$ cells express the same target antigen(s) as $S$ cells.
The final term represents the conversion of TMZ-sensitive cells ($S$ and $R_C$) into TMZ-resistant cells ($R_E$) due to TMZ ($E$), governed by the rate $\varepsilon_1$.

Equation \eqref{eqC} models the dynamics of CAR-T cells. The first term represents the natural death or inactivation of activated CAR-T cells, occurring at a rate $\rho_{1}$. The second and third terms describe the proliferation of CAR-T cells upon encountering CAR-T sensitive tumor cells ($S$ and $R_E$), with rate constants $\rho_2$ and $\rho_3$ and saturation levels $g_1$ and $g_2$, respectively. The fourth term accounts for CAR-T cell inactivation due to interactions with tumor cells and their microenvironment, with a rate $\rho_4$ per tumor cell and a saturation threshold of $g_3$ CAR-T cells. Finally, the last term captures the destruction of CAR-T cells by TMZ at a rate $\alpha_{3}$. Similar equations describing CAR-T cell dynamics have been employed in \cite{Ode2,Bodnar2023amcs,Forys}.

Lastly, Equation \eqref{eqE} is a first-order kinetics describing the normalized concentration of the drug as in \cite{Derippe22}, i.e., the efficacy of the drug ($E$), which decays exponentially with a constant rate $\mu$.

\subsubsection{Parameter values}

We have carried out a comprehensive review of the literature and experimental data to obtain biologically relevant values for the model parameters. Table \ref{tab:paramenters} summarizes the parameters, detailing their biological meanings, assigned values, and corresponding sources. When no source is available, we assume some ranges based on biological and rational reasoning explained, as explained below.

When MG cells become resistant to TMZ, their cancer growth rate ($r_2$) can vary depending on multiple factors, including the underlying resistance mechanisms and the tumor microenvironment. In this study, we assume that TMZ resistance is achieved through energy-dependent mechanisms, leading to a reduced growth rate, as suggested by several studies (e.g., references \cite{campos2014aberrant,stepanenko2016temozolomide,yuan2018abt,dai2018scd1}). Specifically, we set $r_2 = r_1 / 2$ for each virtual patient throughout the manuscript. However, previous reports have indicated that TMZ-resistant cells may exhibit growth rates similar to or higher than those of TMZ-sensitive cells \cite{gupta2014discordant,stepanenko2016temozolomide,dai2018scd1,Delobel_PLOS}. To address these scenarios, we relax this assumption and analyze the implications in Sections ~\ref{sec:res_TMZ_only} and ~\ref{sec:res_fast_grow}.

To the best of our knowledge, no experiments have directly evaluated TMZ's cytotoxic efficacy against CAR-T cells in MG patients. However, since CAR-T cells share the characteristics of actively proliferating cells with MG and TMZ kills actively proliferating cells, we assume that the killing efficacy of TMZ against CAR-T cells ($\alpha_3$) is comparable to its efficacy against tumor cells ($\alpha_1$). For simplicity, we adopt $\alpha_3 = \alpha_1$ for each virtual patient throughout the manuscript.

Different mechanisms are responsible for the intrinsic (inherent ability of MG to resist TMZ) and acquired (develops during or after treatment) TMZ resistance in MG \cite{tomar2021elucidating}. The primary mechanism of intrinsic resistance is the expression of the O6-Methylguanine-DNA Methyltransferase (MGMT) gene, which encodes a DNA repair enzyme. In fact,
high levels of MGMT expression are associated with poor response to TMZ \cite{tomar2021elucidating}. However, the initial fraction of TMZ-resistant cells in MG ({$\delta_1$}) has not been clearly characterized. Here, we assume $\delta_1$ in $[10^{-4},0.1]$ to always account for intrinsic resistance, while maintaining at the same time a predominance of sensitive cells before treatment.

\subsubsection{Initial and final conditions}

During computations, we initially assume that there are $T^{(0)}$ tumor cells in total, $S^{(0)}+R_{C}^{(0)}+R_{E}^{(0)}=T^{(0)}$. We also set the fractions of resistant cells $R_{E}^{(0)}$ and $R_{C}^{(0)}$, which we denote $\delta_{1}$ and $\delta_{2}$, respectively.  Initial conditions for CAR-T cells ($C$) and for efficacy of TMZ ($E$) are zero in our simulations. Thus, initial conditions for our model are given by
\begin{eqnarray*}
    S(0)&=&S^{(0)}=T^{(0)}(1-\delta_{1}-\delta_{2}),\\R_E(0)&=&R_{E}^{(0)}={\delta_{1}}T^{(0)}, \\ R_C(0)&=&R_{C}^{(0)}={\delta_{2}}T^{(0)}, \\
C(0)&=&0,\quad \quad E(0)=0.
\end{eqnarray*}

Note that the initial conditions for CAR-T cells ($C$) and TMZ ($E$) can be positive when describing situations where treatments were previously applied.

In our numerical simulations, the final condition corresponds to patient death, which occurs when the tumor reaches the fatal volume of $K/5$, equivalent to $10^{12}$ cells. Although there is no exact number for the number of tumor cells in a glioma that leads to death, $10^{12}$ could correspond approximately to 100 grams or a volume of $300cm^3$, which has been reported as a fatal tumor volume \cite{Delobel_PLOS}.

\subsubsection{Modeling treatment applications}

The administration of CAR-T cells and TMZ follows discrete treatment protocols, with specific dosing schedules that significantly influence system dynamics. These treatments are modeled as instantaneous events due to the nature of CAR-T injections and the rapid pharmacokinetics of TMZ, where peak plasma concentrations are typically reached within 30–90 minutes post-administration \cite{neuropharmacokinetics}, a timeframe much shorter than tumor growth dynamics. The application of treatments introduces discontinuities in the treatment state variables \cite{Delobel_PLOS,italia2022optimal,italia2023mathematical}. Specifically, the time integration of the system is halted precisely at the moment of treatment administration. The affected variables (CAR-T cell concentration $C$ and TMZ efficacy $E$) are then updated to reflect the treatment dosage, and integration resumes with the modified initial conditions. This approach captures the inherently discontinuous nature of therapeutic interventions within a continuous ODE framework.

Let $T_{1}$ denote a time point when TMZ is administered, and $T_{2}$ a time point when CAR-T cells are injected.

Then
\begin{gather}
E(T_{1}+0)=E(T_{1}-0)+E_{0}, \label{doses_eq1}\\
C(T_{2}+0)=C(T_{2}-0)+v,
\label{doses_eq2}
\end{gather}
where $E_{0}$ denotes the normalized amount of administered TMZ ($0\leq E_{0}\leq1$), and $v$ represents the number of CAR-T cells injected. In equation \eqref{doses_eq1} and \eqref{doses_eq2} and throughout the manuscript, the expressions $\mp 0$ denote standard one-sided limits, which biologically represent the time immediately before and just after the application of therapy (TMZ or CAR-T).

For the rest of the variables, we assume continuity at these time points:
\begin{gather}
    {S}(T_{1,2}+0)= {S}(T_{1,2}-0), \nonumber \\
     {R_{C}}(T_{1,2}+0)= {R_{C}}(T_{1,2}-0), \\
     {R_{E}}(T_{1,2}+0)= {R_{E}}(T_{1,2}-0). \nonumber
\end{gather}

Note that, in clinical practice, the number of treatment applications is finite. Accordingly, in our model simulations, the number of external pulses corresponding to either TMZ or CAR-T administration is also finite. We denote by $L_1$ the number of TMZ cycles, where a standard cycle consists of five consecutive days of drug administration followed by a 23-day rest period \cite{stupp_protocl2005}. Similarly, we denote by $L_2$ the number of injections of CAR-T cells, so that the total number of CAR-T cells administered to a patient is $L_2 v$. Therefore, these external forces are applied a finite number of times rather than periodically, resulting in a five-dimensional dynamical system.

\subsection{Mathematical analysis background}\label{sec:math_back}

We now introduce a series of definitions and propositions that form the foundation for the mathematical analysis presented in Section \ref{sec:res_math_anal}.

Consider a dynamical system
\begin{equation}
\label{eq:gen_dyn_sys}
    \dot{x}=f(x),
\end{equation}
where $x\in\mathbb{R}^{n}$, $f(x)$ is a smooth vector function of $x$ that is defined in some domain $\Omega\subseteq \mathbb{R}^{n}$, and $n\in\mathbb{N}$. Since in \eqref{eqS}--\eqref{eqE} all parameters and variables are positive, its right-hand side is a smooth function in $\mathbb{R}^{5}$.

One can associate to \eqref{eq:gen_dyn_sys} the vector field $\mathcal{X}=f_{i}(x)\partial_{x_{i}}$, where $f_{i}(x)$, $i=1,\dots,n$, are components of $f$, and summation over repeated indexes is assumed.

The zero set $H(x)=0$ of a smooth function $H(x)$ defines an invariant surface of dynamical system \eqref{eq:gen_dyn_sys}, and its associated vector field $\mathcal{X}$ if there exists continuous function $k(x)$, called the cofactor, such that
\begin{equation}
\label{eq:Inv_surf_def}
    \mathcal{X}H=kH.
\end{equation}

In addition, we introduce several definitions and known results.

We denote by $\overline{\mathbb{R}}_{+}^{n}$ the nonnegative orthant of $\mathbb{R}^{n}$, i.e., the set $\overline{\mathbb{R}}_{+}^{n} = \{ x_i \geq 0, \ i=1,\dots,n \}$.

Let $\Omega$ be the open domain of definition of \eqref{eq:gen_dyn_sys}, and assume that $\overline{\mathbb{R}}_{+}^{n}\subset \Omega$.
Then, $f$ is said to be essentially nonnegative if $f_{i}(x) \geq 0$ for $i=1,\dots,n$ and $x \in \overline{\mathbb{R}}_{+}^{n}$ such that $x_{i} = 0$ (see, \cite{Bernstein1999,Haddad2005}).

The following statement guarantees the non-negativity of solutions to \eqref{eq:gen_dyn_sys} if its right-hand side is essentially non-negative (see Proposition 4.1 from \cite{Haddad2005} ):

\begin{Proposition}\label{Prop2}
    Suppose that $\overline{\mathbb{R}}_{+}^{n}\in\Omega$. Then $\overline{\mathbb{R}}_{+}^{n}$ is an invariant set of \eqref{eq:gen_dyn_sys} if and only if $f$ is essentially non-negative.
\end{Proposition}

\subsection{Virtual Patients and \textit{In Silico} Trials}\label{sec:vp_insilico_trials}

In oncology research, rapidly testing and refining therapeutic strategies is crucial for improving patient care \citep{mercieca2018importance}. However, experimental studies and clinical trials are often time-consuming and resource-intensive, which can hinder the pace of innovation. Mechanistic modeling, particularly through ordinary differential equations (ODEs), provides a more efficient alternative \citep{alfonso2020translational}. ODE-based mechanistic models are computationally efficient, enabling the simulation of numerous treatment scenarios in a fraction of the time required by other approaches, such as complex agent-based models. These models are especially useful for studying tumor dynamics and treatment evolution, as they can generate large virtual cohorts that replicate the variability seen in real-world populations \citep{dormand2018numerical}. Virtual clinical trials, or \textit{in silico} trials, have become a valuable tool in mathematical oncology \citep{gevertz2024assessing}. They help reduce the risk of clinical trial failures and inform the design of focused and high-likelihood experimental studies \citep{brown2022derisking}. By simulating various treatment regimens, researchers can systematically evaluate their effects and identify the most effective strategies for specific patient subgroups, accelerating discovery while reducing costs and addressing ethical concerns \citep{brown2022derisking}.

Section \ref{sec:model} presents our mathematical model to describe MG cancer growth and its evolution in response to TMZ and CAR-T therapy. The model parameters (Table \ref{tab:paramenters}) represent tumor characteristics, including demographic processes, response to TMZ, and response to CAR-T. By assigning specific parameter values, we can generate a virtual patient (VP)--a digital representation of a real patient. When parameter distributions are unknown, as in this study, they are typically assumed to follow random distributions \citep{ayala2021optimal}. Therefore, a VP corresponds to a specific combination of the model parameters sampled from the specified ranges in Table~\ref{tab:paramenters}.
Repeating this process $N$ times generates a virtual cohort of $N$ VPs. By simulating the responses of these VPs to specific protocols, an \textit{in silico} trial is conducted.

In Section \ref{sec:res_numerics}, we present the results of the \textit{in silico} trials in various figures using Kaplan-Meier (KM) curves and risk tables produced with MathSurv \cite{creed2020matsurv} in MATLAB\copyright\ 2023b. KM curves are a statistical tool commonly used in medical research and clinical trials to estimate and visualize the survival function from time-to-event data. These curves show the proportion of participants surviving over time and decrease at each time a death event occurs. The method also accounts for censored data, where the exact event time is unknown due to participants being lost to follow-up or the study ending before the event occurs. KM curves are used to compare survival between different groups, such as no treatment versus treatment or treatment 1 versus treatment 2, often employing statistical tests like the log-rank test. This tool enables researchers to visually (via KM curves) and numerically (via p-values of the log-rank test) compare survival differences between patient groups under different treatment strategies, providing valuable insights into optimal treatments.

Throughout the text, we use the term median virtual patient (MVP) to refer to a simulated individual whose characteristics--such as tumor parameters, treatment responses, or disease progression--represent the median point of the statistical distribution within a virtual population. Specifically, the MVP is defined by the parametrization using the median values of the distributions listed in Table \ref{tab:paramenters}. Note that for the parameter $\rho_{4}$ we used the upper bound 0.1, because the median survival time drops sharply if we increase this boundary further (see Fig. \ref{fig:f3a_d}).

We use  MATLAB\copyright\ 2023b to perform numerical calculations and visualize the results. To numerically find solutions to the Cauchy problems considered in this work, we use the ODE78 solver.

\section{Results}\label{sec:result}

Here, we present the results obtained with our mathematical model. First, Section \ref{sec:res_math_anal} presents the mathematical analysis of the system of equations \eqref{eqS}--\eqref{eqE},considering both treatments as constants. Next, in Section~\ref{sec:res_numerics}, we simulate \textit{in silico} trials using our mathematical model and present the results for the following scenarios: TMZ monotherapy (Section~\ref{sec:res_TMZ_only}), CAR-T cell monotherapy (Section~\ref{sec:res_only_CART}), combined TMZ and CAR-T therapy (Section~\ref{sec:res_combined}), and treatment responses in the presence of fast-growing TMZ-resistant cells (Section \ref{sec:res_fast_grow}).

\subsection{Mathematical Analysis}\label{sec:res_math_anal}

In this subsection, we conduct a theoretical study of our model \eqref{eqS}--\eqref{eqE}, initially considering treatment with a single dose of CAR-T and TMZ at the beginning of therapy and then considering constant doses for both treatments.

 Note that, since our model variables track the number of cancer cells, CAR-T cells and TMZ normalized concentration, by biologically relevant solutions of the system \eqref{eqS}--\eqref{eqE}, we refer to solutions--particularly equilibrium states--with all non-negative components. Moreover, we define a parameter value of the system \eqref{eqS}--\eqref{eqE} as biologically relevant if it falls within the corresponding range presented in Table \ref{tab:paramenters}.

 We begin this section by proving the existence of several invariant surfaces:

\begin{Proposition}
The system \eqref{eqS}--\eqref{eqE} has four invariant surfaces $H_{1}=S$, $H_{2}=R_{C}$, $H_{3}=R_{E}$ and $H_{4}=E$.
\label{p:p1}
\end{Proposition}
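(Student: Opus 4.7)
The plan is to show that each zero set $\{H_i = 0\}$ is forward-invariant under \eqref{eq:new_model}. The standard route is to compute $\dot{H}_i$ along trajectories and exhibit it as $\dot{H}_i = K_i(x)\,H_i$ for some cofactor $K_i$; then any solution starting at $H_i = 0$ coincides with the constant-zero solution by Picard uniqueness, so the surface is invariant.

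First I would dispose of the three easy cases. Reading off \eqref{eqS}, \eqref{eqRC} and \eqref{eqE}, I can factor
\begin{equation*}
\dot{S} = S\Bigl[r_1\Bigl(1-\tfrac{S+R_C+R_E}{K}\Bigr)-(\alpha_1+\varepsilon_1)E-\alpha_2 C\Bigr],
\end{equation*}
and similarly $\dot{R}_C = R_C\,K_2(x)$ and $\dot{E} = -\mu E$. In each of these three cases the cofactor form is manifest, so $\{S=0\}$, $\{R_C=0\}$ and $\{E=0\}$ are each invariant by the uniqueness argument above.

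The case $H_3 = R_E$ is the one that actually requires a thought. A direct computation from \eqref{eqRE} gives
\begin{equation*}
\dot{R}_E = R_E\Bigl[r_2\Bigl(1-\tfrac{S+R_C+R_E}{K}\Bigr)-\alpha_2 C\Bigr] + \varepsilon_1(S+R_C)E,
\end{equation*}
and the source term $\varepsilon_1(S+R_C)E$ is not divisible by $R_E$, so $\{R_E = 0\}$ is not forward-invariant on its own. The natural remedy is to combine it with one of the invariants already established: on the invariant set $\{E=0\}$ (or on $\{S=0\}\cap\{R_C=0\}$) the source term vanishes identically, the equation reduces to $\dot{R}_E = R_E\,K_3(x)$, and the uniqueness argument now gives invariance. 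This is how I would justify listing $H_3 = R_E$ alongside the other three surfaces.

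The main obstacle, therefore, is not the routine computation but making precise the sense in which $R_E$ counts as an invariant surface. I would present it as a Darboux-polynomial style argument with cofactors for $S$, $R_C$, $E$, and then treat $R_E$ as an invariant of the subsystem obtained by restricting to the already-invariant hypersurface $\{E = 0\}$, where the decoupling needed for the cofactor form is restored.
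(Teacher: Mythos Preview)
Your approach is the same as the paper's: verify the Darboux cofactor identity $\mathcal{X}H_i = \lambda_i H_i$ for each coordinate function. The paper's proof simply asserts that all four of $S$, $R_C$, $R_E$, $E$ satisfy this with cofactor equal to the corresponding right-hand side divided by the variable, and stops there. You have in fact been more careful than the paper: the source term $\varepsilon_1(S+R_C)E$ in \eqref{eqRE} is \emph{not} divisible by $R_E$, so $H_3=R_E$ does not satisfy the cofactor identity on the full phase space, and the paper's proof glosses over this. Your remedy---treating $\{R_E=0\}$ as invariant only within the already-invariant hypersurface $\{E=0\}$ (or $\{S=R_C=0\}$)---is the honest way to make the claim precise, and is exactly what is needed later in the paper when these invariant sets are used at $E=0$. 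So your argument is correct and, on the $R_E$ point, sharper than the original.
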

\begin{proof}
It follows directly from the definition of an invariant surface \eqref{eq:Inv_surf_def} that the functions $H_{1}=S$, $H_{2}=R_{C}$, $H_{3}=R_{E}$, and $H_{4}=E$ are invariant surfaces of {the systems} \eqref{eqS}--\eqref{eqE}, with the {respective} cofactors that are the corresponding components of the {system’s} vector field. This completes the proof.
\end{proof}

Now we proceed with the proof of non-negativeness of solutions of {the system} \eqref{eqS}--\eqref{eqE}.

Thus, using Proposition \ref{Prop2}, it is straightforward to prove the following statement:
\begin{Proposition}
If $\epsilon_{1}$ is non-negative, then the solutions of the system \eqref{eqS}--\eqref{eqE} remain non-negative for all \( t \geq 0 \), provided the initial conditions are non-negative.
\end{Proposition}
\begin{proof}
First, we demonstrate that the right-hand side of the system \eqref{eqS}--\eqref{eqE}, denoted by $\mathbf{f}(S, R_C, R_E, C, E)$, is essentially non-negative, i.e., $f_i(S, R_C, R_E, C, E) \geq 0$, when one of the variables equals zero and the others are in $\overline{\mathbb{R}}_{+}^{5}$. Specifically, we have:
\begin{align}
    & f_{1}(0, R_C, R_E, C, E) = f_{2}(S, 0, R_E, C, E) = 0, \nonumber \\
    & f_{3}(S, R_C, 0, C, E) = \epsilon_{1} E (S + R_C), \\
    & f_{4}(S, R_C, R_E, 0, E) = v, \quad f_{5}(S, R_C, R_E, C, 0) = 0. \nonumber
\end{align}
Therefore, if the conditions of this proposition are satisfied, the right-hand side of \eqref{eqS}--\eqref{eqE} is essentially non-negative. Hence, for any non-negative initial conditions, the solutions of the system \eqref{eqS}--\eqref{eqE} remain non-negative for all \( t \geq 0 \). This completes the proof.
\end{proof}

Now we consider equilibrium states of the model \eqref{eqS}--\eqref{eqE}. The following statement holds:
\begin{Proposition}
If $r_{1}\neq r_{2}$, then all isolated biologically relevant equilibrium states of \eqref{eqS}--\eqref{eqE} are unstable.
\end{Proposition}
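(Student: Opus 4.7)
The plan is first to enumerate all equilibria and then to exhibit a positive Jacobian eigenvalue at each isolated one. Since $\dot E = -\mu E$ with $\mu > 0$, every fixed point satisfies $E^{*} = 0$. Writing $L^{*} := 1 - (S^{*}+R_C^{*}+R_E^{*})/K$, the equation $\dot R_C = 0$ becomes $r_1 R_C^{*} L^{*} = 0$, so either $R_C^{*} = 0$ or $L^{*} = 0$. In the case $L^{*} = 0$ with $C^{*} = 0$, the remaining three equations reduce to $0 = 0$, producing a two-parameter continuum of equilibria on the plane $\{S+R_C+R_E = K,\ C = 0,\ E = 0\}$; none of these are isolated, so they fall outside the scope of the proposition. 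In the case $L^{*} = 0$ with $C^{*} \neq 0$, the first and third equations force $S^{*} = R_E^{*} = 0$ and hence $R_C^{*} = K$, but then $\dot C = 0$ collapses to $-C^{*}[\rho_1 + \rho_4 K/(g_3+C^{*})] = 0$, which has no positive root. So every isolated equilibrium must satisfy $R_C^{*} = 0$ and $L^{*} \neq 0$.

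Under these constraints the first and third equations simplify to $S^{*}(r_1 L^{*} - \alpha_2 C^{*}) = 0$ and $R_E^{*}(r_2 L^{*} - \alpha_2 C^{*}) = 0$. This is where the hypothesis $r_1 \neq r_2$ enters: if both $S^{*}$ and $R_E^{*}$ were strictly positive one would need $r_1 L^{*} = r_2 L^{*}$, which together with $r_1 \neq r_2$ would force $L^{*} = 0$, contradicting what was just established. Hence the biologically relevant isolated fixed points split into three classes: the origin; an $S$-branch $(S^{*}, 0, 0, C^{*}, 0)$ with $r_1 L^{*} = \alpha_2 C^{*} > 0$; and an $R_E$-branch $(0, 0, R_E^{*}, C^{*}, 0)$ with $r_2 L^{*} = \alpha_2 C^{*} > 0$. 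On both nontrivial branches $L^{*} > 0$, because $C^{*} > 0$ and $r_i > 0$; at the origin trivially $L^{*} = 1$.

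For instability I would exploit that $R_C^{*} = 0$ and $E^{*} = 0$ at every such equilibrium. Each partial derivative of $\dot R_C = r_1 R_C(1 - (S+R_C+R_E)/K) - (\alpha_1 + \varepsilon_1) E R_C$ with respect to a variable other than $R_C$ carries a factor of $R_C$ or $E$ and therefore vanishes at the fixed point, while $\partial_{R_C}\dot R_C = r_1 L^{*}$. Thus the $R_C$-row of the Jacobian collapses to $(0,\ r_1 L^{*},\ 0,\ 0,\ 0)$, so $r_1 L^{*}$ is a real eigenvalue of the linearisation (with $e_{R_C}$ as left eigenvector). Since $L^{*} > 0$ in every case, this eigenvalue is strictly positive, and Lyapunov's indirect method yields instability.

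The main obstacle is the case analysis, and in particular recognising that the role of $r_1 \neq r_2$ is precisely to preclude a fourth class of isolated equilibria with both $S^{*}, R_E^{*} > 0$, and that the $L^{*} = 0$ equilibria either form a non-isolated two-parameter family or are infeasible. Once the structure of the isolated fixed points is pinned down, instability is essentially immediate: the $R_C$ axis is linearly invariant at any such equilibrium and carries positive net growth rate $r_1 L^{*}$ whenever the total tumour burden is below carrying capacity and $E^{*} = 0$, which is necessarily the situation at any isolated equilibrium of the system.
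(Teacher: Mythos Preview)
Your argument is correct, and the overall strategy---classify the isolated equilibria, then exhibit a positive eigenvalue at each---matches the paper's. But your execution is tighter and more transparent than the paper's in two respects. First, you make explicit where the hypothesis $r_1 \neq r_2$ is used: it rules out a potential branch with both $S^{*}>0$ and $R_E^{*}>0$ (the paper silently omits this case). Second, you unify the instability step: rather than inspecting each family separately, you observe that at every isolated equilibrium $R_C^{*}=0$ and $E^{*}=0$, so the $R_C$-row of the Jacobian is $(0,\,r_1 L^{*},\,0,\,0,\,0)$ and $r_1 L^{*}>0$ is an eigenvalue. The paper, by contrast, treats the origin, the $(0,0,R_E,C,0)$ points, and the $(S,0,0,C,0)$ points as three separate computations and only asserts that a positive eigenvalue exists ``for all $R_E<K$'' or ``if $S<K$'' without naming it; your eigenvalue $r_1 L^{*}=r_1(1-(S^{*}+R_E^{*})/K)$ is exactly what lies behind those conditions. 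What the paper's case-by-case treatment buys is an explicit list of the equilibria (including the biologically irrelevant $Q_5$ with $C<0$), while your approach buys a single uniform instability mechanism.
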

\begin{proof}
Note that any fixed point of \eqref{eqS}--\eqref{eqE} will have the last component equal to zero, i.e., $E=0$. If, in addition $C=0$, then there is an invariant surface $K-S-R_{C}-R_{E}=0$. Its stability will be considered below, and we do not consider points that belong to it here.

The origin $O=(0,0,0,0,0)$ is an unstable degenerated node, as the corresponding eigenvalues of the Jacobi matrix are $(r_{1},r_{1},r_{2},-\rho_{1},-\lambda)$.

Suppose that $S=R_{C}=0$ and $R_{E}\neq0$. In this case, we obtain that $C_{1,2}=r_{2}(K-R_{E}^{(1,2)})/(\alpha_{2}K)$ and $R$ is a solution of a quadratic equation (recall that $K-S-R_{C}-R_{E}\neq0$). Thus, we have two equilibrium states of the form $Q_{1,2}=(0,0,R_{E}^{(1,2)},R_{C}^{(1,2)},0)$. The Jacobi matrix at $Q_{1,2}$ has a positive eigenvalue {for} all $R_{E}<K$.

Assume that $R_{C}=R_{E}=0$ and $C\neq0$. In this case, we find that $C^{(3,4)}=r_{1}(K-S^{(3,4)})/(K\alpha_{2})$ and $S^{(3,4)}$ are solutions of a quadratic equation. If we evaluate the Jacobi matrix at the equilibrium points $Q_{3,4}=(S^{(3,4)},0,0,C^{(3,4)},0)$, we find that one of the eigenvalues is positive if $S^{(3,4)}<K$.

If $S=R_{E}=0$ and $C\neq0$, we find a fixed point $Q_{5}=(0,K,0,-g_{3}-K\rho_{4}/\rho_{1},0)$. However, upon inspection, we find that \( C < 0 \), meaning that \( Q_{5} \) is not biologically relevant in the context of cancer dynamics. Furthermore, we can show that $Q_{5}$ is unstable for positive values of the parameters. This completes the proof.
\end{proof}

Now we consider the stability of the invariant hyperplane $K-S_{1}-S_{2}-R=0$ of \eqref{eqS}--\eqref{eqE} at $C=E=0$.

\begin{Proposition}
If $C=E=0$, the system \eqref{eqS}--\eqref{eqE} has an invariant surface $P_{1}=K-S-R_{C}-R_{E}$, which is locally stable.
\end{Proposition}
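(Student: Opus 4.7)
The plan is to restrict the system to the invariant subspace $\{C=0,E=0\}$, reduce the claim to an ODE for $P_1$ alone, and show directly that $P_1$ decays exponentially to zero along trajectories starting close to the surface. Setting $C=E=0$ in \eqref{eq:new_model}, the last two equations are trivially satisfied (the subspace is invariant, as follows already from Proposition \ref{p:p1} applied to $H_4=E$ and from $\dot C|_{C=0}=0$), and the first three reduce to the purely logistic block
\begin{equation*}
\dot S=r_1 S\,\bigl(1-T/K\bigr),\quad \dot R_C=r_1 R_C\,\bigl(1-T/K\bigr),\quad \dot R_E=r_2 R_E\,\bigl(1-T/K\bigr),
\end{equation*}
where $T:=S+R_C+R_E$.

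Next, I would compute the Lie derivative of $P_1=K-S-R_C-R_E$ along the reduced field. Summing the three equations above gives
\begin{equation*}
\dot P_1=-\dot S-\dot R_C-\dot R_E=-\bigl(1-T/K\bigr)\bigl(r_1 S+r_1 R_C+r_2 R_E\bigr)=-\lambda(S,R_C,R_E)\,P_1,
\end{equation*}
with cofactor $\lambda=(r_1(S+R_C)+r_2 R_E)/K$. Invariance of the surface $P_1=0$ is immediate, since $\dot P_1$ vanishes whenever $P_1$ does; this also supplies the cofactor required by the definition used in Proposition \ref{p:p1}.

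For local stability I would exploit the fact that $\lambda$ is strictly positive on the surface away from the origin. Fix a point $(S_*,R_C^*,R_E^*)$ on $P_1=0$, so $S_*+R_C^*+R_E^*=K>0$, which forces $\lambda_*:=\lambda(S_*,R_C^*,R_E^*)>0$ (using $r_1,r_2>0$). By continuity, there is a neighborhood $U$ of this point on which $\lambda\geq \lambda_*/2>0$. Any trajectory starting in $U$ satisfies
\begin{equation*}
|P_1(t)|=|P_1(0)|\,\exp\!\left(-\int_0^t \lambda(S(s),R_C(s),R_E(s))\,ds\right)\leq |P_1(0)|\,e^{-\lambda_* t/2},
\end{equation*}
so $P_1$ is monotone in modulus and does not leave $U$ for small enough $|P_1(0)|$; the uniform lower bound on $\lambda$ is thus self-consistent, and one obtains exponential convergence to the surface. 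Equivalently, the Lyapunov function $V=P_1^2$ satisfies $\dot V=-2\lambda V\leq -\lambda_* V$ on $U$, which gives local asymptotic (in fact exponential) stability of $\{P_1=0\}$.

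The main obstacle is purely bookkeeping: one must verify that the lower bound $\lambda\geq \lambda_*/2$ is maintained along the trajectory, not merely at the initial point. This is where the monotone decay of $|P_1|$ is essential — because $P_1$ cannot grow, trajectories cannot drift far from the surface along the normal direction, and the tangential motion is controlled by the logistic block on a compact piece of the simplex where $\lambda$ is continuous and positive. No genuine difficulty arises, and the argument terminates at this step.
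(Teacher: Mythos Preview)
Your argument is correct and takes a genuinely different, more elementary route than the paper. The paper proceeds by computing the Jacobian eigenvalues of the full five-dimensional system on the plane, then exploits the two first integrals $I_1=S/R_C$ and $I_2=R_E/R_C^{\,r_2/r_1}$ of the reduced three-dimensional logistic block to collapse the dynamics to a single scalar equation for $R_C$, which is then linearized at its fixed point. You instead observe directly that $\dot P_1=-\lambda P_1$ with $\lambda=(r_1(S+R_C)+r_2R_E)/K$, and use $V=P_1^2$ as a Lyapunov function. This bypasses both the first-integral reduction and any linearization, and gives exponential (not merely asymptotic) attraction to the surface for free.

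Two remarks. First, your self-consistency step can be made cleaner without invoking a neighborhood $U$ at all: since $T=K-P_1$ is monotone toward $K$ (increasing when $P_1>0$, decreasing when $P_1<0$), one has $T(t)\geq\min(T(0),K)$, hence $\lambda(t)\geq \min(r_1,r_2)\,T(t)/K\geq \min(r_1,r_2)\min(T(0),K)/K$, a uniform lower bound valid globally in the nonnegative orthant away from the origin. This removes the circularity you flag. Second, the paper's proof also verifies that the eigenvalues in the transverse $(C,E)$ directions are negative, i.e.\ it establishes attraction of the two-dimensional surface $\{P_1=0,\,C=E=0\}$ inside the full five-dimensional phase space, whereas your argument stays within the invariant subspace $\{C=E=0\}$. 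Given the proposition's hypothesis ``If $C=E=0$'', your scope matches the statement as written; but be aware that the paper is implicitly proving something slightly stronger.
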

\begin{proof}
It is straightforward to observe that for {\eqref{eqS}--\eqref{eqE}} with $C=E=0$, any point {on} the plane
\begin{equation}
    P_{1}=K-S-R_{C}-R_{E}=0,
\end{equation}
is a fixed point.

If we compute the eigenvalues of the Jacobi matrix for \eqref{eqS}--\eqref{eqE} at $C=E=P_{1}=0$, we find that there are two zero eigenvalues and three negative eigenvalues for the parameter values of Table \ref{tab:paramenters}. Therefore, there exist stable and central manifolds near each point of $P_{1}$.

Let us demonstrate that the invariant plane $P_{1}$ is stable. Computing the eigenvalues of the Jacobi matrix for the subsystem of \eqref{eqS}--\eqref{eqE} that consists of the last two equations, at $C=E=P_{1}=0$, we obtain
\begin{align}
\lambda_{1} &= -\lambda < 0, \nonumber\\
\lambda_{2} &= -\rho_{1} - \frac{\rho_{4}K}{g_{3}} + \frac{\rho_{2}S_{1}}{g_{1} + S_{1}} + \frac{\rho_{2}S_{1}}{g_{1} + S_{1}} \leq \nonumber \\
& -\rho_{1} - \frac{\rho_{4}K}{g_{3}} + \rho_{2} + \rho_{3} < 0.
\end{align}

Now we consider the dynamics in the subspace $C=E=0$. Substituting $C=E=0$ into \eqref{eqS}--\eqref{eqE} ,we obtain a three-dimensional dynamical system
\begin{eqnarray}
\label{eq:new_model_reduction}
&S_{t}= r_{1}S_{1}\left(1-\frac{S+R_{C}+R_{E}}{K}\right), \nonumber \\
&R_{C,t}=r_{1}S_{2}\left(1-\frac{S+R_{C}+R_{E}}{K}\right), \\
&R_{E,t}=r_{2}R\left(1-\frac{S+R_{C}+R_{E}}{K}\right) \nonumber
\end{eqnarray}
which is completely integrable since it has two first integrals
\begin{equation}
    I_{1}=\frac{S}{R_{C}}, \quad I_{2}=\frac{R_{E}}{R_{C}^{r_{2}/r_{1}}}.
\end{equation}
We can reduce \eqref{eq:new_model_reduction} to
\begin{equation}
\label{eq:new_model_reduction_1}
  R_{C,t}=r_{1}R_{C}\left(1-\frac{(c_{1}+1)R_{C}+c_{2}R_{C}^{r_{2}/r_{1}}}{K}\right),
\end{equation}
where $c_{1}=S_{0}/R_{C}^{(0)}$, $c_{2}=R_{E}^{(0)}/((R_{C}^{(0)})^{r_{2}/r_{1}})$, and $S(0)=S_{0}$, $R_{C}(0)=R_{C}^{(0)}$, $R_{E}(0)=R_{E}^{(0)}$.

Consequently, the stability of a fixed point $(S_{0},R_{C}^{(0)},R_{E}^{(0)})$ satisfying $K-S_{0}-R_{C}^{(0)}-R_{E}^{(0)}=0$ is reduced to studying the stability of the fixed point $R_{C}=R_{C}^{(0)}=K-S_{0}-R_{E}^{(0)}$. Differentiating the right-hand side of \eqref{eq:new_model_reduction_1} and substituting $R_{C}=R_{C}^{(0)}$ into the result, we obtain an expression that is always negative for $r_{1}>0$, $r_{2}>0$, and $K-S_{0}-R_{C}^{(0)}-R_{E}^{(0)}=0$.  Therefore, the points that belong to the invariant plane $P_{1}$ are stable. This completes the proof.
\end{proof}

If $r_{2}=r_{1}$, there is an additional invariant line $P_{2}$ at $E=R_{C}=0$, which is defined by the following two relations:
\begin{gather}
    r_{1}\left(1-\frac{S+R_{E}}{K}\right)-\alpha_{2}C=0, \\
    -\rho_{1}+\frac{\rho_{2}S}{g_{1}+S}+\frac{\rho_{3}R_{E}}{g_{2}+R_{E}}-\frac{\rho_{4}(S+R_{E})}{g_{3}+C}=0.
\end{gather}
If we evaluate the Jacobi matrix along this line, there always exists a positive eigenvalue; hence, this invariant line is unstable.

Summarizing the above results, we conclude that in system \eqref{eqS}--\eqref{eqE} only the surface $P_{1}$, which exists at $C=E=0$, is attractive. Therefore, only with an initial dose of both CAR-T cells and TMZ, the tumor cells will proliferate and the tumor size will eventually reach the carrying capacity, i.e., the trajectories of \eqref{eqS}--\eqref{eqE} will lie on the surface $P_{1}$.

Let us demonstrate that, if we apply both treatments continuously, there is the possibility of reaching complete tumor eradication, which means that $S,\,S_{C},\,S_{R}\rightarrow0$ as $t\rightarrow\infty$. In this case, model \eqref{eqS}--\eqref{eqE} has the form
\begin{widetext}
\begin{eqnarray}
\label{eq:new_model1}
\frac{dS}{dt}&=&r_1 S \left(1-\frac{S+R_{C}+R_{E}}{K}\right)-\alpha_1 E S -\varepsilon_{1} E S-\alpha_2 CS, \label{eqS1} \\
\frac{dR_C}{dt}&=&r_1R_C\left(1-\frac{S+R_{C}+R_{E}}{K}\right)-\alpha_1E R_C-\varepsilon_{1} E R_C, \label{eqRC1}  \\
\frac{dR_E}{dt}&=&r_2R_E\left(1-\frac{S+R_{C}+R_{E}}{K}\right)-\alpha_2CR_E+\varepsilon_1 (S+R_C)E, \label{eqRE1} \\
\frac{dC}{dt}&=& \bar{v}-\rho_1C+\frac{\rho_2SC}{g_1+S} +\frac{\rho_3R_EC}{g_2+R_E}-\rho_{4}\frac{(S+R_{C}+R_{E})C}{g_{3}+C}-\alpha_3EC \label{eqC1},\\
\frac{dE}{dt}&=&\bar{E_0}-\mu E \label{eqE1},
\end{eqnarray}
\end{widetext}
where $\bar{v}$ is the dose of CAR-T cells per day ($\mbox{cells}\cdot \mbox{day}^{-1}$), and $\bar{{E_0}}$ ($\mbox{day}^{-1}$) is the normalized amount of TMZ given each day.

The following statement holds:
\begin{Proposition}
\label{p:p5}
System {\eqref{eqS1}-\eqref{eqE1}} has a stable equilibrium state of the form $(0,0,0,C^{*},E^{*})$ for the ranges of the parameters given in Table \ref{tab:paramenters} if and only if $\bar{E_0}(\alpha_{1}+\epsilon_{1})>r_{1}\mu$ and $\bar{v}>\frac{r_{2}(\bar{E_0}\alpha_{3}+\mu\rho_{1})}{\mu \alpha_{2}}$.
\end{Proposition}
\begin{proof}
{{It is straightforward to demonstrate that system \eqref{eqS1}-\eqref{eqE1} has an equilibrium state of the form $(0,0,0,C^{*},E^{*})$ if and only if
$C^{*}=\frac{\mu \bar{v}}{\alpha_{3}\bar{E_{0}}+\rho_{1}\mu}$ and $E^{*}=\frac{\bar{E_0}}{\mu}$.}}

{{Thus, we analyze the stability of the equilibrium state $Q_{6}=\left(0,0,0,\frac{\mu \bar{v}}{\alpha_{3} \bar{E_{0}}+\rho_{1}\mu},\frac{\bar{E_0}}{\mu}\right)$. The eigenvalues of the Jacobi matrix evalueted at $Q_{6}$ are given by:}}
\begin{eqnarray}
\label{eq:ct_ev}
 \lambda_{1}=&-\mu, & \lambda_{2}=-\frac{\alpha_{3}\bar{E_0}+\rho_{1}\mu}{\mu}, \nonumber \\
 \lambda_{3}=&r_{1}-\frac{(\alpha_{1}+\epsilon_{1})\bar{E_0}}{\mu}, &
 \lambda_{4}=\lambda_{3}-\frac{\alpha_{2}\bar{v}\mu}{\bar{E_0}\alpha_{3}+\mu\rho_{1}},\\
\lambda_{5}=&r_{2}-\frac{\alpha_{2}\bar{v}\mu}{\bar{E_0}\alpha_{3}+\mu\rho_{1}}. \nonumber
\end{eqnarray}
Note that all eigenvalues are real and, hence, we require that $Q_{6}$ is a stable non-degenerate node, i.e., all eigenvalues are negative.

One can see that the first two eigenvalues are always negative for biologically relevant values of the parameters. {The last eigenvalue can be considered as a constraint on the dose of CAR-T cells needed to destroy the population resistant to TMZ. The condition on the parameter $\bar{v}$ is
\begin{equation}
\label{eq:V_constr}
    \bar{v}>\bar{v_{c}}=\frac{r_{2}(\bar{E_0}\alpha_{3}+\mu\rho_{1})}{\mu \alpha_{2}},
\end{equation}
where $V_{c}$ denotes the critical dose of CAR-T cells.

If we take into account the ranges of the parameters given in Table \ref{tab:paramenters}, the maximal value of $\bar{v_{c}}$ is approximately $5.3\cdot10^{7}$ $\mbox{cells}\cdot \mbox{day}^{-1}$, which is within the biologically relevant range.}

From \eqref{eq:ct_ev}, it follows that if $\lambda_{3}<0$, then $\lambda_{4}<0$. Thus, the second condition for the stability of $Q_{6}$ is
\begin{equation}
\label{eq:a1_constr}
\bar{E_0} (\alpha_{1}+\epsilon_{1})>r_{1}\mu.
\end{equation}
This can be considered as a constraint on the efficiency of the TMZ against the sensitive population. It follows from \eqref{eq:a1_constr} that therapeutic success is favored by administering the maximum tolerable dose of chemotherapy, i.e., set $\bar{E_0}=1$. If $\bar{E_0}=1$, then the critical value of $\alpha_{1}+\epsilon_{1}$ is approximately 0.21 for the maximal value of $r_{1}$ from Table \ref{tab:paramenters}. This completes the proof.
\end{proof}

We observe that Proposition~\ref{p:p5} delineates conditions on CAR-T cell and TMZ administration that ensure stability of the tumor-free equilibrium. Consequently, by applying both therapies in the impulsive manner outlined earlier, the trajectory governed by the system in \eqref{eqS1}-\eqref{eqE1} is directed towards this equilibrium.

Let us remark that in our simulations, we employ CAR-T dosages that satisfy the condition in \eqref{eq:V_constr}. Additionally, we assume the administration of the maximum chemotherapy dosage, guaranteeing that condition \eqref{eq:a1_constr} holds over a broad range of values for $\alpha_{1}$ and $\epsilon_{1}$.

\subsection{Numerical computations} \label{sec:res_numerics}

In this subsection, we explore the applications of the model \eqref{eqS}--\eqref{eqE} in the context of \textit{in silico} trials. Specifically, we first investigate the effects of TMZ alone in Section \ref{sec:res_TMZ_only}, followed by an examination of CAR-T cell therapy as a standalone treatment in Section \ref{sec:res_only_CART}. Finally, we analyze the combined application of both therapies in Section \ref{sec:res_combined}.

We consider applications of different treatments by \textit{in silico} trials. Note that each virtual patient is characterized by a specific combination of the model parameters, with all parameters listed in Table \ref{tab:paramenters}, except for those held constant, being sampled uniformly from their respective ranges. To ensure a fair comparison across different treatment protocols, we apply treatments to the same virtual populations. Moreover, since fluctuations in the mean and median values of a distribution decay as $1/\sqrt{N}$, we fix the virtual population size to $N = 10000$ virtual patients in the following.

Throughout the remainder of the manuscript, we denote by $T_s$ the time required for a tumor to reach a critical fatal size of $10^{12}$ cells, thus representing the survival time.
Each treatment protocol is represented by an acronym that reflects the sequence and type of administered therapies. For instance, the 1C5T1C5T protocol involves one dose of CAR-T cells followed by five cycles of TMZ, another CAR-T cell dose, and five additional TMZ cycles. Similarly, the 2C protocol corresponds to the administration of two CAR-T cell injections without any chemotherapy. Survival times under different protocols are indicated by an upper index attached to $T_s$, specifying the type and number of treatments applied. For instance, $T_s^{NT}$ denotes the survival time without any treatment, $T_s^{10T}$ corresponds to ten cycles of TMZ monotherapy, and $T_s^{2C}$ represents the survival time following two CAR-T cell injections. For combination therapies, the notation reflects the full treatment sequence. Median survival times are indicated with a tilde. For instance, $\widetilde{T}_s^{5T2C5T}$ denotes the median survival time under a protocol consisting of five TMZ cycles, two CAR-T cell doses, and five additional TMZ cycles.

\subsubsection{Applications of TMZ}\label{sec:res_TMZ_only}

In this subsection, we focus exclusively on the application of TMZ as monotherapy, setting the CAR-T cell administration rate to zero ($v = 0$). The TMZ dosing schedule follows the adjuvant phase of the standard Stupp protocol \cite{stupp_protocl2005}, wherein each treatment cycle spans 28 days: TMZ is administered daily during the first five consecutive days, followed by a 23-day rest period. We assume that we administer the maximum possible dose of TMZ.
Thus, $E_0$ is set to impulsively equal 1 at the beginning of the administration days, specifically during the first $5$ days of each TMZ cycle.

\begin{figure}[ht!]
    \centering    \includegraphics[width=0.99\linewidth]{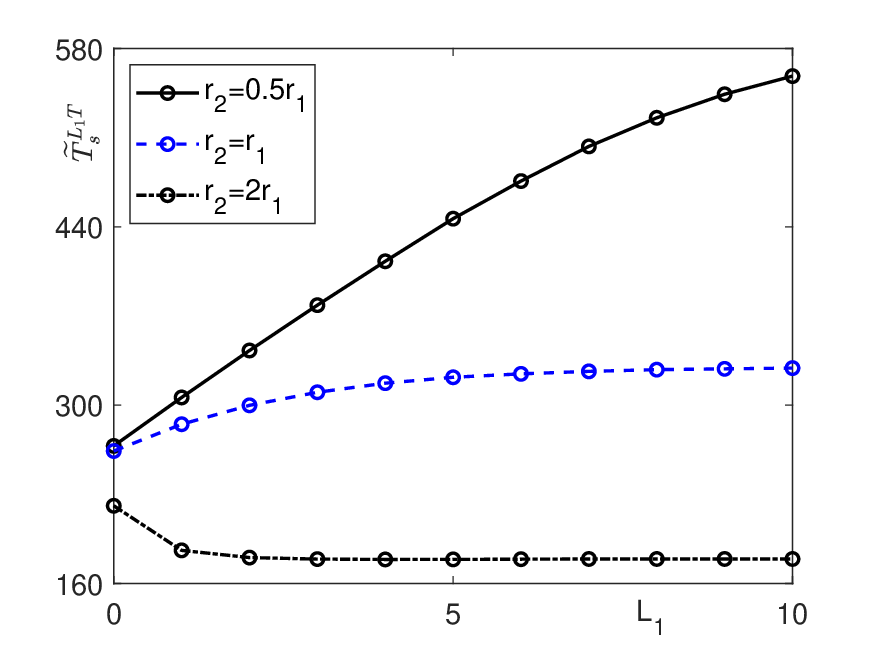}
    \caption{The dependence of the median survival time for treatment with TMZ on the number of TMZ cycles for different relationships between the sensitive ($r_{1}$) and TMZ-resistant ($r_{2}$) proliferation rates.}
    \label{fig:f6}
\end{figure}

We first investigate the dependence of the median survival time, $\widetilde{T}_{s}^{L_1T}$, on the number of TMZ cycles ($L_1$), as shown in Fig. \ref{fig:f6}. When the growth rate of TMZ-resistant cells ($r_2$) is less then or equal to that of sensitive cells ($r_1$),  increasing the number of TMZ cycles enhances survival.  Conversely, when TMZ-resistant cells grow faster than sensitive cells ($r_2 > r_1$), it is more beneficial to avoid TMZ treatment altogether. These trends are consistent with clinical evidence indicating that approximately 50\% of glioblastoma patients do not respond to TMZ treatment \cite{lee2016temozolomide}. Notably, in the case where $r_2 = 2r_1$, the median survival time remains almost constant regardless of the number of TMZ cycles. Thus, combining TMZ with CAR-T cells could be synergistically beneficial for the $r_2 = 2r_1$ case.
Motivated by these findings, we assume the maximum number of TMZ cycles ($L_1 = 10$) in subsequent analyses.

\begin{figure}[ht!]
    \centering
    \includegraphics[width=0.99\linewidth]{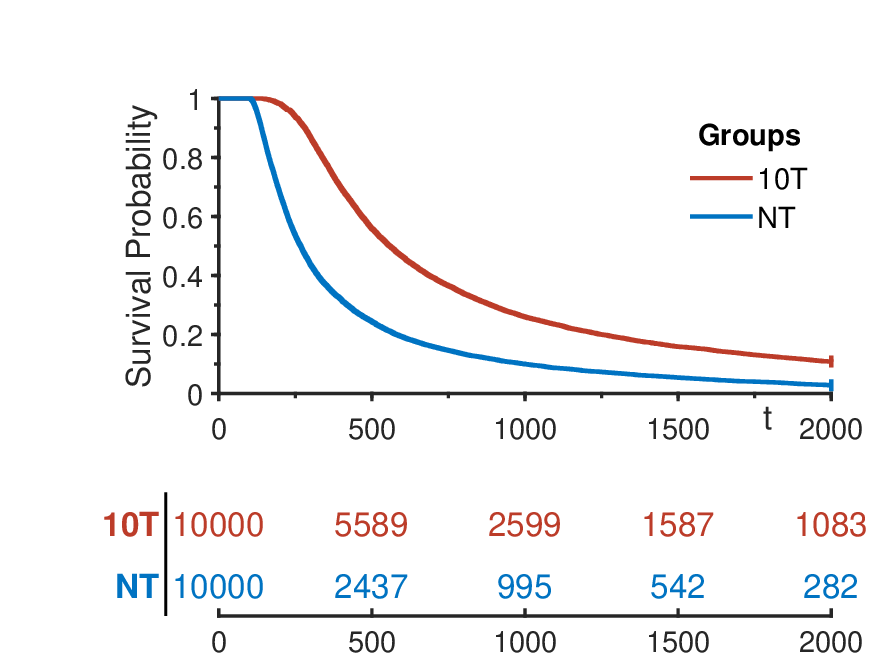}
    \caption{Kaplan–Meier curves and risk table comparing {10 TMZ cycles} (10T) and no treatment control (NT), for our \textit{in silico} trial with 10000 virtual patients.
    }
    \label{fig:f5}
\end{figure}

\begin{figure}[ht!]
    \centering
    \includegraphics[width=0.95\linewidth]{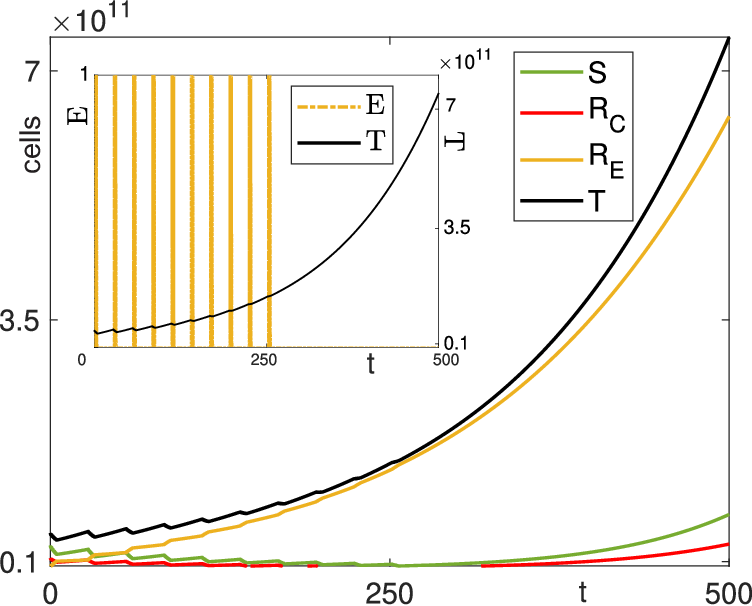}
    \caption{The dynamics of different tumor compartments and TMZ concentration for the treatment with only TMZ for the MVP.}
    \label{fig:f5a}
\end{figure}

Focusing again on the case with $r_2=0.5 r_1$,
Fig. \ref{fig:f5} shows the KM curves for the \textit{in silico} trial with 10000 virtual patients, and Fig. \ref{fig:f5a} the dynamics of the MVP. The application of 10 TMZ has a considerable impact on the sensitive part of the tumor, while the TMZ-resistant part contributes mostly to tumor growth.
The resulting median survival time under this protocol, $\widetilde{T}{s}^{10T}$, is 558 days--more than twice that observed under no treatment (NT), $\widetilde{T}{s}^{NT} = 268$ days--representing a survival benefit of 108.21\%.

\begin{table}[!h]
\caption{The Pearson correlation coefficient ($r$) and p-value between $T_{s}^{10T}$ and model parameters.}
\begin{tabular}{@{}ccc@{}}
Parameter & r & p-value \\ \hline
$r_{1}$ & -0.70 & 0.00 \\
$T_{0}$ & -0.13 & 0.00 \\
$\alpha_{1}$ & 0.11 & 0.00\\
\label{t:t3}
\end{tabular}
\end{table}

We analyze the correlation between the \textit{in silico} survival time $T_s^{10T}$ and the model parameters, as shown in Table~\ref{t:t3}, which includes only statistically significant parameters (p-value $< 0.05$) with correlation coefficients $r \geq 0.1$. This criterion is consistently applied to all subsequent Tables presenting correlation analyses throughout the manuscript.

The tumor growth rate of the sensitive population ($r_1$) emerges as the most influential factor. In addition, the initial tumor size ($T_0$) and TMZ killing efficacy ($\alpha_1$) exhibit notable correlations with survival outcomes. Other parameters display minimal influence.

\begin{figure}[!h]
    \centering
    \includegraphics[width=0.5\textwidth]{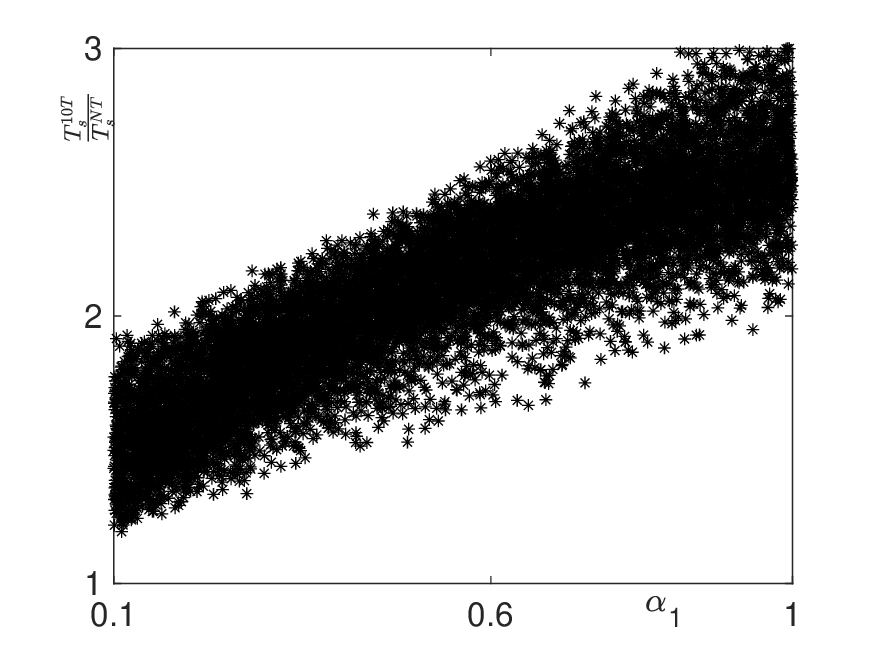}
    \caption{Correlation between the improvement of survival time with 10 TMZ cycles (10T) in comparison with no treatment (NT) $T_{s}^{10T}/T_{s}^{NT}$ and TMZ efficiency $\alpha_{1}$.}
    \label{fig:f7}
\end{figure}

We next examine correlations between the improvement in survival times with 10 TMZ cycles compared to the untreated case (defined as the ratio $T_{s}^{10T}/T_{s}^{NT}$) of the same virtual patients (same model parametrization in the 2 cases) and the model parameters.
The results, summarized in Table~\ref{t:t4}, highlight a different trend: the TMZ killing efficacy against sensitive tumor cells ($\alpha_1$) emerges as the most influential parameter. Additionally, the initial fraction of TMZ-resistant cells ($\delta_1$) shows a modest, yet non-negligible, effect.

\begin{table}[!h]
\caption{The Pearson correlation coefficient ($r$) and corresponding p-value between $T_{s}^{10T}/T_{s}^{NT}$ and model parameters.
}
\begin{tabular}{@{}ccc@{}}
Parameter & r & p-value \\ \hline
$\alpha_{1}$ & 0.88 & 0.00 \\
$T_{0}$ & 0.33 & 0.00 \\
$r_{1}$ & -0.18 & 0.00 \\
\label{t:t4}
\end{tabular}
\end{table}

The correlation between $T_{s}^{10T}/T_{s}^{NT}$ and $\alpha_{1}$ is shown in Fig.\ref{fig:f7}.


\subsubsection{Applications of CAR-T cells}\label{sec:res_only_CART}

\begin{figure}[h!]
    \centering
    \includegraphics[width=0.95\linewidth]{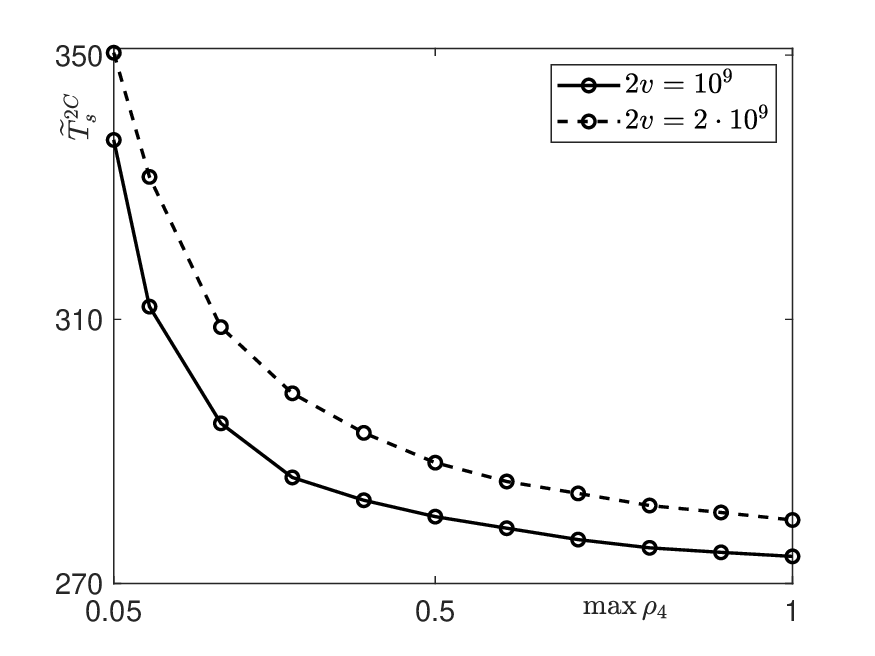}
    \caption{The dependence of the the median survival time $\widetilde{T}_{s}^{2C}$ on  $\max \rho_{4}$ for $2v=10^{9}$ (solid line) and $2v=2\cdot 10^{9}$ (dashed line).}
    \label{fig:f3a_d}
\end{figure}

In this Section, we apply only CAR-T cell monotherapy to the set of virtual patients described previously, thus $E=E_{0}=0$. Recall that one of the main challenges for effective CAR-T cell treatment is the immunosuppressive nature of the tumor microenvironment \cite{kringel2023chimeric}. For the success of CAR-T therapy, the immune properties of the tumor, such as the inactivation rate $\rho_{4}$, are important. To assess the impact of immune suppression on therapy outcomes, we analyze how variations in $\rho_{4}$ affect the median survival time. We consider the same distribution of virtual patients with different maximal values of $\rho_{4}$ from $0.05$ to $1$ (the other parameters remain the same). Note that we theoretically increase the range of $\rho_{4}$ in order to better illustrate the sharp drop in the median survival time.
Figure~\ref{fig:f3a_d} shows the dependence of the median survival time, $\widetilde{T}_{s}^{2C}$, on the upper bound of $\rho_{4}$. The results reveal a rapid drop in survival time as $\rho_{4}$ increases, approaching the untreated baseline value of 268 days. These findings suggest that CAR-T monotherapy is largely ineffective in highly immunosuppressive tumor environments. Accordingly, for all subsequent analyses in this study, we constrain the maximum value of $\rho_{4}$ to 0.1.

Now we investigate the dependence of the median survival $\widetilde{T}_{s}^{C}$ on different CAR-T cell treatment strategies, varying both the number of CAR-T cells injected ($v$) and the number of injections ($L_2$). Figure \ref{fig:f3} shows the result of the analysis.

\begin{figure}[h!]
    \centering
    \includegraphics[width=0.95\linewidth]{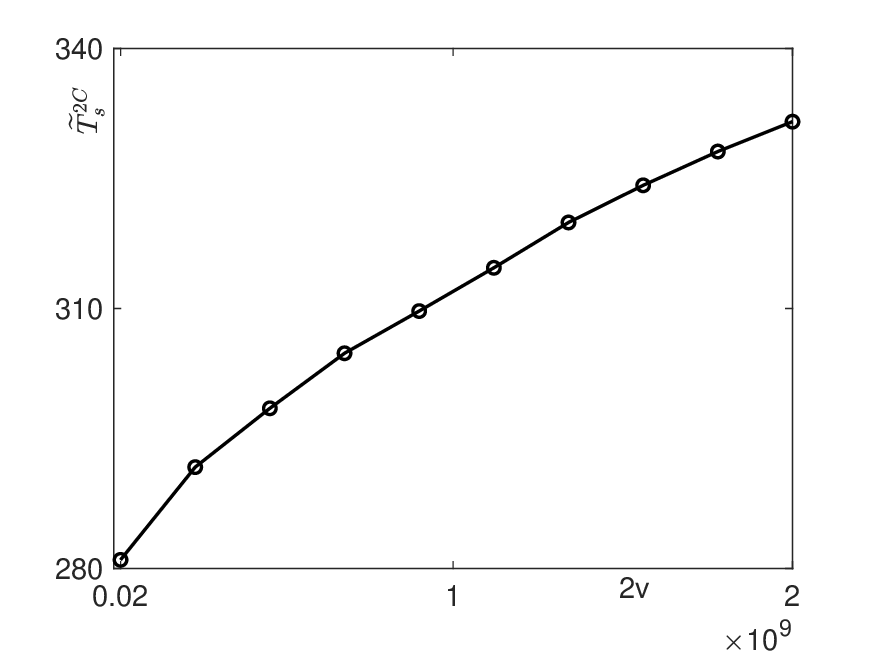}
    \includegraphics[width=0.95\linewidth]{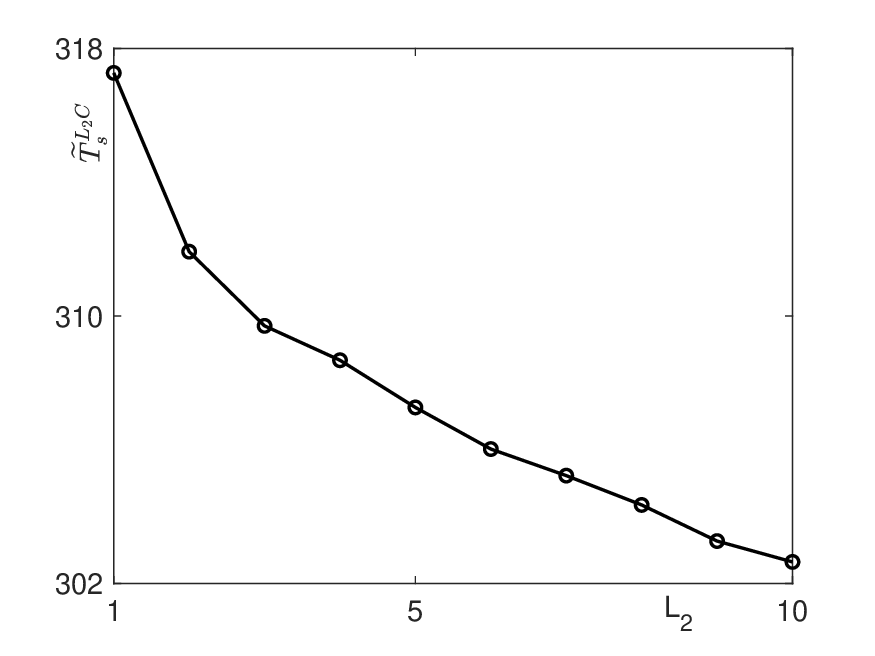}
    \caption{The dependence of $\widetilde{T}_{s}^{2C}$ on the total dosage of CAR-T cells $2v$ ($L_{2}=2$) on the amount of used CAR-T cells (upper panel). The dependence of $\widetilde{T}_{s}^{L_{2}C}$ on the number of CAR-T applications $L_{2}$ with total dose $L_{2}v=10^{9}$ (lower panel). }
    \label{fig:f3}
\end{figure}

We study the dependence of median survival times for CAR-T treatments $\widetilde{T}_{s}^{2C}$ on the total dose of CAR-T cells (see the upper panel of Fig. \ref{fig:f3}). Here, we fix the number of injections of CAR-T cells to 2 ($L_2=2$). We find that the more total CAR-T cells are given, the longer the median survival times.

 In the bottom panel of Fig. \ref{fig:f3}, we explore how distributing a fixed total dose of $10^{9}$ CAR-T cells ($v L_2 = 10^{9}$) across multiple applications (i.e., varying $L_2$) influences the median survival time.
The results show that the number of CAR-T applications has a slight impact on median survival $\widetilde{T}_{s}^{L_2C}$, the optimal approach being the administration of all CAR-T cells in a single large dose ($L_2=1$). However, given the clinical relevance of mitigating potential side effects and the relatively small difference in survival outcomes, we recommend dividing the available CAR-T cells into two applications ($L_2=2$) as a more balanced therapeutic strategy.

For patient toxicological and manufacturing constraints, we consider two different total dosages of CAR-T that we apply, $L_{2}v=10^{9}$ and $L_{2}v=2\cdot10^{9}$, and we distribute CAR-T cells in two applications, that is, $L_{2}=2$. Thus, in the first case, CAR-T cells are distributed in two equal doses of $0.5\cdot 10^9$ cells, while in the second case, CAR-T cells are distributed in two equal doses of $1\cdot 10^9$ cells.

\begin{figure}[h!]
    \centering
    \includegraphics[height=6cm]{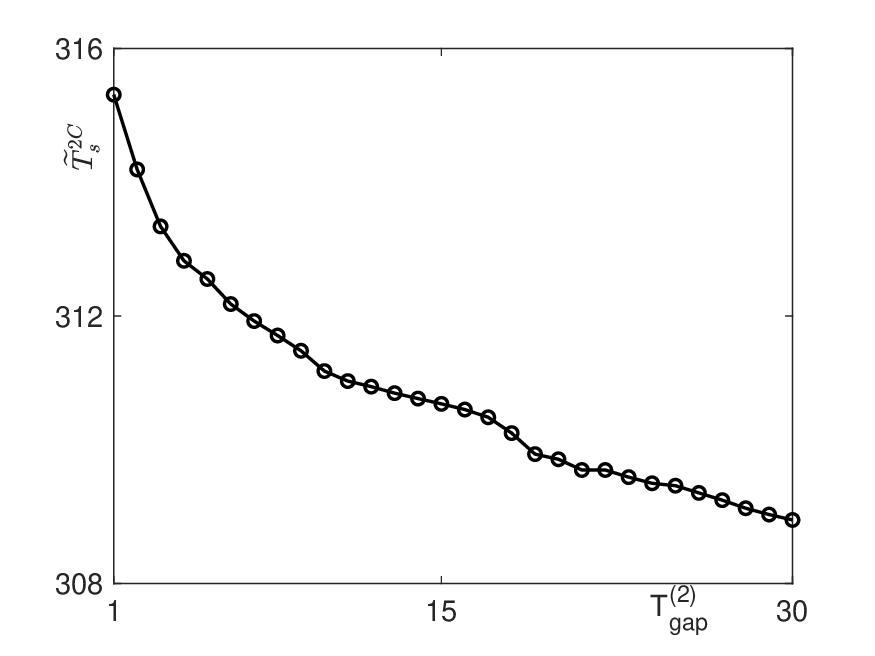}
    \caption{The dependence of the median survival time $\widetilde{T}_{s}^{2C}$ on the time gap between applications $T^{(2)}_{\text{gap}}$ for $L_{2}=2$ and $L_{2}v=10^{9}$ cells.}
    \label{fig:f3a}
\end{figure}

Fig.~\ref{fig:f3a} illustrates the dependence of the median survival time on the interval between two CAR-T cell administrations. The results show a slight decrease in survival outcomes as the time gap between injections increases. Despite this modest negative trend, introducing a delay between CAR-T doses can be clinically advantageous for reducing potential treatment-related toxicity.  Considering that the typical lifespan of CAR-T cells in the human body is approximately 7 days, we distribute CAR-T cells in two injections with a 7-day interval between them.

\begin{figure}[ht!]
    \centering
    \includegraphics[height=6cm]{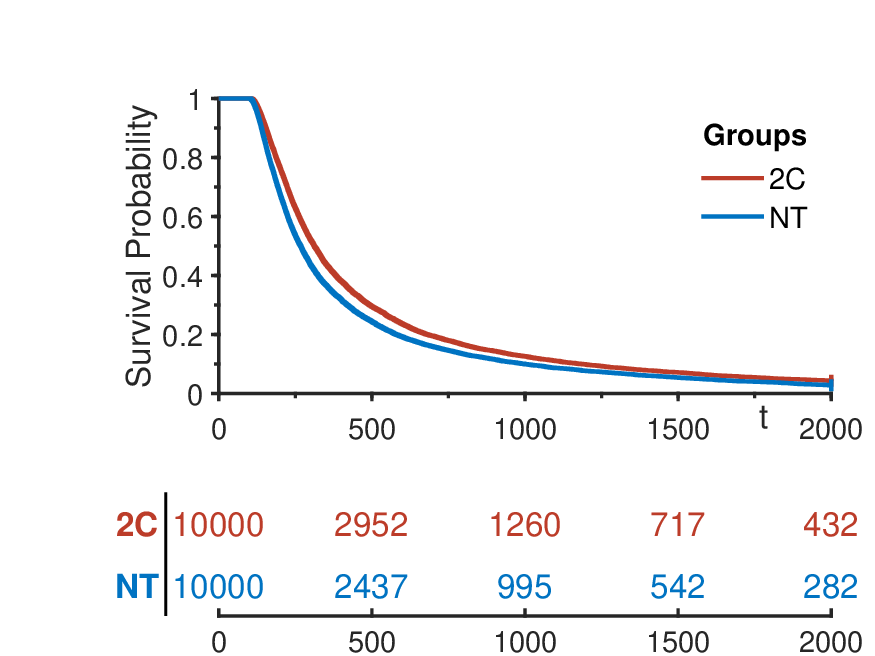}
    \includegraphics[height=6cm]{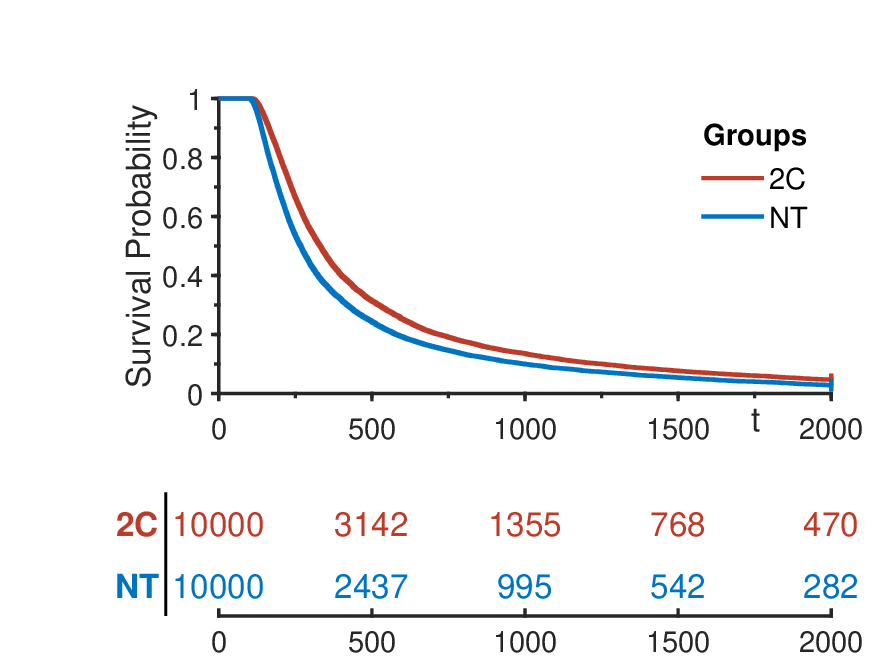}
    \caption{KM curves and risk tables comparing CAR-T cell treatment (2C) with no treatment control (NT) for $L_{2}v = 10^{9}$ (top panel) and $L_{2}v = 2 \cdot 10^{9}$ (bottom panel).
    }
    \label{fig:f1}
\end{figure}

\begin{figure}[ht!]
    \centering
    \includegraphics[width=0.95\linewidth]{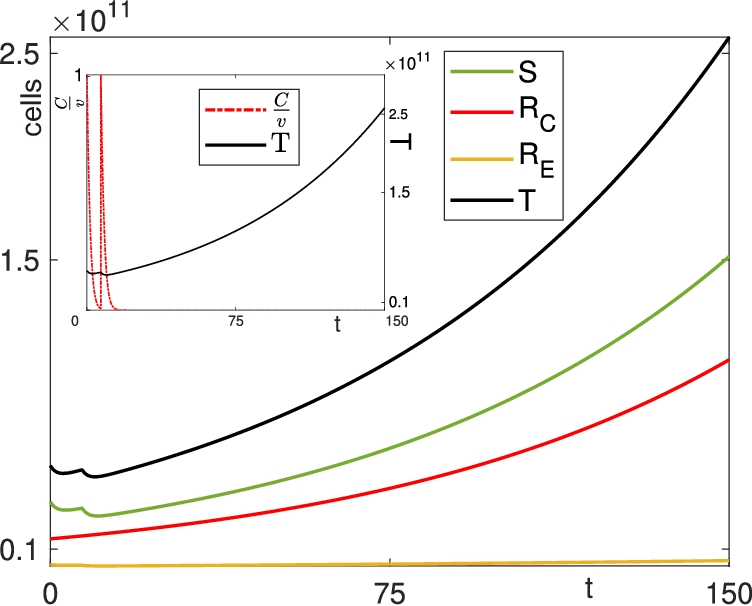}
    \caption{The dynamics of different tumor compartments and CAR-T cells for the treatment with only CAR-T cells (2C protocol) for the MVP and $L_{2}v=10^{9}$.}
    \label{fig:f2}
\end{figure}

By performing an \textit{in silico} trial applying only CAR-T cell therapy to the virtual cohort ($N=10000$ virtual patients), there is a slight positive effect on survivals (see KM curves in Fig. \ref{fig:f1}). The median survival time for the untreated population (NT) is 268 days. With a total dose of $10^9$ CAR-T cells, the median survival increases to 312 days, corresponding to a 16.42\% improvement. Doubling the CAR-T dose to $2 \cdot 10^9$ cells yields a median survival of 332 days, reflecting a 23.88\% gain relative to no treatment.

In Fig. \ref{fig:f2}, we present the dynamics of the tumor components and CAR-T cells for the MVP. The result shows that CAR-T cells do not proliferate and exert only a minimal impact on tumor dynamics for the MVP, characterized by median parameter values.

\begin{figure}[h!]
    \centering
    \includegraphics[width=0.95\linewidth]{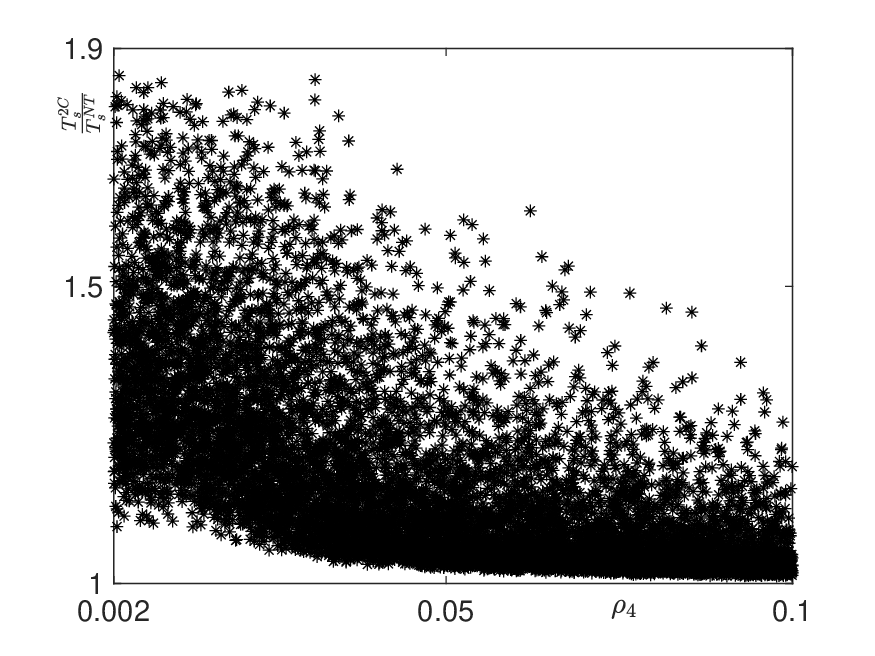}
    \caption{Correlation between ${T_{s}^{2C}}/{T_{s}^{NT}}$ and $\rho_4$ for $2v=10^{9}$.}
    \label{fig:f3a_c}
\end{figure}

\begin{table}[!h]
\caption{The statistically significant correlations ($r$ and p-value) between ${T}_{s}^{2C}$ and the model parameters { for $2v=10^{9}$}.}
\begin{tabular}{@{}ccc@{}}
Parameter & r & p-value \\ \hline
$r_{1}$ & -0.68 & 0.00 \\
$T_{0}$ & -0.21 & 0.00 \\
\label{t:t1}
\end{tabular}
\end{table}

Table \ref{t:t1} shows that survival time is strongly correlated with the growth rate $r_{1}$ of the sensitive population, and weakly correlated with the initial tumor size $T_{0}$ and the tumor inactivation rate $\rho_{4}$. If instead of $T_{s}^{2C}$ we consider the correlation between the fraction of $T_{s}^{2C}$ for CAR-T cell monotherapy over $T_{s}^{NT}$ without treatment, we find that the four most important parameters for survival improvements are the tumor inactivation rate $\rho_{4}$, the fraction of cells resistant to CAR-T $\delta_{2}$, and the mitotic stimulation of CAR-T cells by sensitive ($\rho_{2}$) and resistant ($\rho_{3}$) tumor cells (see Table \ref{t:t2}). However, also the mean life of activated CAR-T cells ($\rho_{1}$) and the sensitive cell growth rate ($r_{1}$) are statistically correlated, but with limited impact.

\begin{table}[!h]
\caption{The statistically significant correlations ($r$ and p-value) between  ${T}_{s}^{2C}/T_{s}^{NT}$ and the model parameters
 { for $2v=10^{9}$}}
\begin{tabular}{@{}ccc@{}}
Parameter & r & p-value \\ \hline
$\rho_{4}$ & -0.66 & 0.00 \\
$\delta_{2}$ & -0.38 & 0.00 \\
$\rho_{2}$ & 0.2 & 0.00 \\
$\rho_{3}$ & 0.2 & 0.00 \\
$T_{0}$ & -0.15 & 0.00 \\
\label{t:t2}
\end{tabular}
\end{table}

This relationship is further illustrated in Fig.~\ref{fig:f3a_c}, where a strong negative correlation between $\rho_4$ and ${T_{s}^{2C}}/{T_{s}^{NT}}$ is evident. Notably, lower values of $\rho_4$ are associated with greater therapeutic efficacy of CAR-T cell monotherapy relative to no treatment.

\subsubsection{Combined treatments: TMZ and CAR-T cells}\label{sec:res_combined}

In this section, we explore several combined treatments. The goal is to understand how TMZ and CAR-T cells should be optimally combined. Specifically, we evaluate a set of naive combination protocols in which two CAR-T cell injections ($L_{2} = 2$) are administered alongside ten TMZ cycles ($L_{1} = 10$). The timing of CAR-T administration is varied across three scenarios: before, between, and after the TMZ cycles. We consider two values for the total number of CAR-T cells injected: $2 v=10^{9}$ and $2v=2\cdot 10^{9}$. As a control treatment, we use the treatment with 10 TMZ cycles, since {{TMZ is typically administered to patients with MG}}. The median survival time under this 10T protocol is 558 days for both CAR-T cell dose distributions. The objective is to assess whether combined treatments can yield improvements over this baseline.

\begin{table}[!h]
\centering
\caption{Median survival times for different combined protocols with 10 ($L_1$) TMZ cycles and 2 ($L_2$) CAR-T cell injections with $r_2=0.5r_1$, for $L_2v=10^9$ and $L_2v=2\cdot 10^9$.}\label{table_ct}
\begin{tabular}{ccc}
   Protocol &$\widetilde{T}_{s}$, days, ${2}v=10^{9}$ & $\widetilde{T}_{s}$, days, ${2}v=2\cdot10^{9}$  \\ \hline
NT & 268 & 268 \\
2C & 312 & 332 \\
10T & 558 & 558 \\
5T2C5T & 652 & 689 \\
2C10T & 641 & 665 \\
1C5T1C5T & 653 & 688 \\
5T1C5T1C & 638 & 673 \\
10T2C & 597 & 636 \\
1C10T1C & 641 & 670
\end{tabular}
\end{table}

\begin{figure}[!h]
    \centering
    \includegraphics[width=0.9\linewidth]{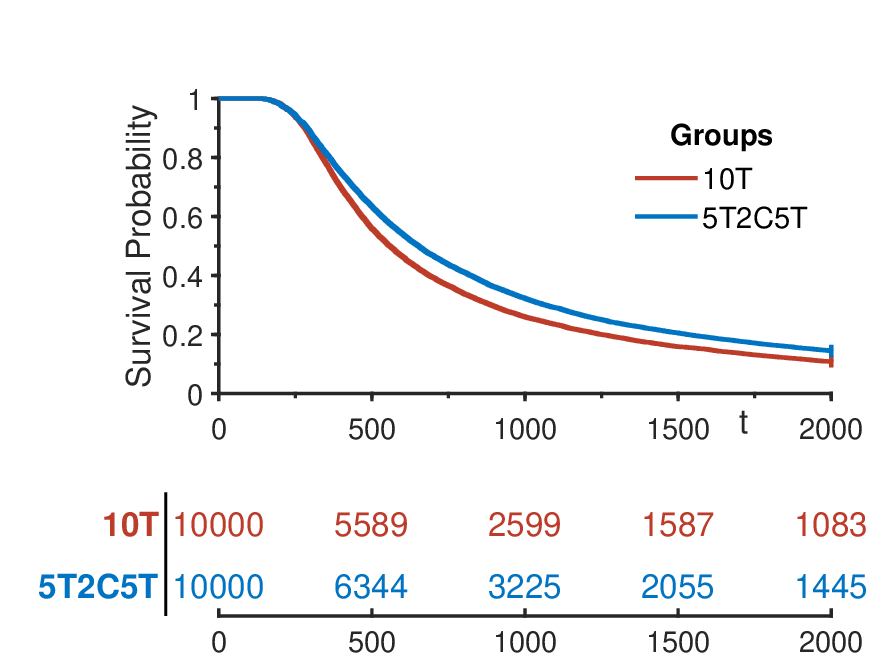}
       \includegraphics[width=0.9\linewidth]{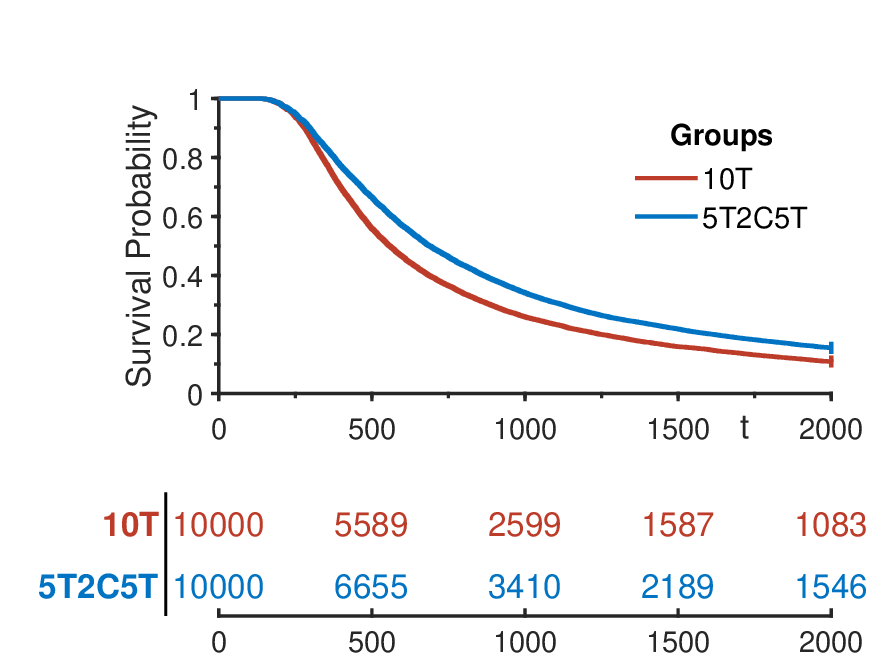}
    \caption{KM curves and risk tables comparing the 5T2C5T and 10T protocols, for $2v=10^{9}$ (top) and $2v=2\cdot 10^{9}$ (bottom). }
    \label{fig:f8}
\end{figure}

\begin{figure}[!h]
    \centering
    \includegraphics[width=0.9\linewidth]{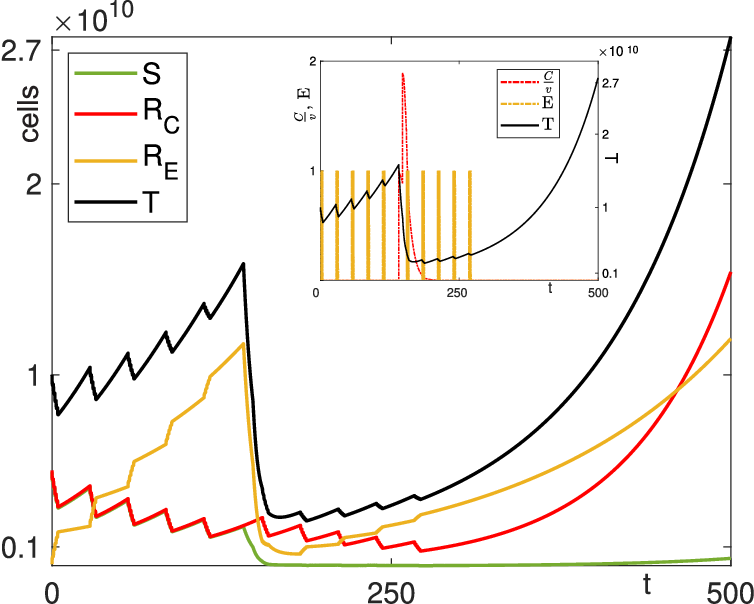}
    \caption{Dynamics for the MVP under the 5T2C5T protocol for $2v=10^{9}$. }
    \label{fig:f9}
\end{figure}

A summary of the median survival outcomes for each combined protocol is presented in Table \ref{table_ct}.

{We begin by analyzing the 5T2C5T protocol, in which two CAR-T cell injections are administered between two sets of five TMZ cycles. This protocol yields a median survival time of 652 days for a total CAR-T dose of $L_2 v = 10^9$, and 689 days for $L_2 v = 2 \cdot 10^9$. These correspond to substantial improvements of 94 days (16.85\%) and 131 days (23.48\%) relative to the 10T control protocol (558 days). The corresponding KM curves are shown in Fig.~\ref{fig:f8}.}

\begin{table}[!h]
\caption{The statistically significant correlations ($r$ and p-value) between ${T}_{s}^{5T2C5T}$ and model parameters for $2v=10^{9}$.}
\begin{tabular}{@{}ccc@{}}
Parameter & r & p-value \\ \hline
$r_{1}$ & -0.69 & 0.00 \\
$\alpha_{1}$ & 0.13 & 0.00 \\
$T_{0}$ & -0.12 & 0.00 \\
\label{t:t5a}
\end{tabular}
\end{table}

Figure \ref{fig:f9} shows the dynamics of {the model variables} for the MVP under the 5T2C5T protocol for $2v=10^{9}$. Notably, the CAR-T cells effectively target the TMZ-resistant tumor population, resulting in improved tumor control compared to TMZ monotherapy.

\begin{table}[!h]
\caption{The statistically significant correlations ($r$ and p-value) between  ${T}_{s}^{5T2C5T}/T_{s}^{NT}$ and model parameters {{for $2v=10^{9}$.}}}
\begin{tabular}{@{}ccc@{}}
Parameter & r & p-value \\ \hline
$\alpha_{1}$ & 0.69 & 0.00 \\
$\rho_{4}$ & -0.36 & 0.00 \\
$r_{1}$ & -0.29 & 0.00 \\
$T_{0}$ & 0.18 & 0.00 \\
$\epsilon_{1}$ & 0.17 & 0.00\\
$\rho_{2}, \rho_{3}$ & 0.12 & 0.00 \\
$\delta_{2}$ & -0.11 & 0.00 \\
\label{t:t6a}
\end{tabular}
\end{table}

In Tables \ref{t:t5a}--\ref{t:t6a}, we present correlations between survival time and its improvement in comparison to the absence of treatment for the 5T2C5T protocol. The two Tables show that the most important parameters that affect the protocol performance are $r_{1}$, $\alpha_{1}$, $T_{0}$ and $\rho_{4}$. Among these, the TMZ killing efficiency $\alpha_{1}$ emerges as the most critical determinant of treatment efficacy under the 5T2C5T combination strategy.

\begin{figure}[!h]
    \centering
    \includegraphics[width=0.9\linewidth]{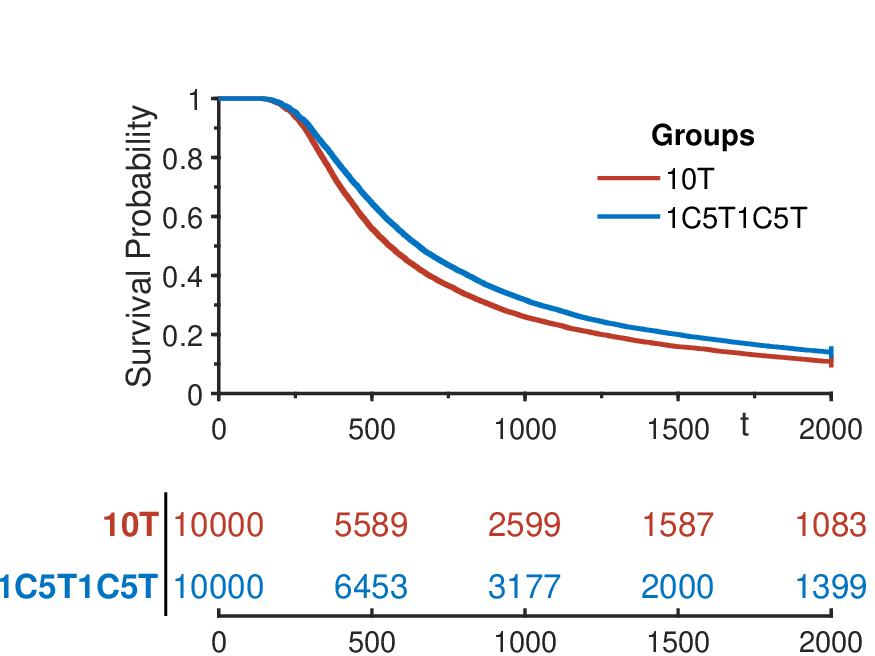}
       \includegraphics[width=0.9\linewidth]{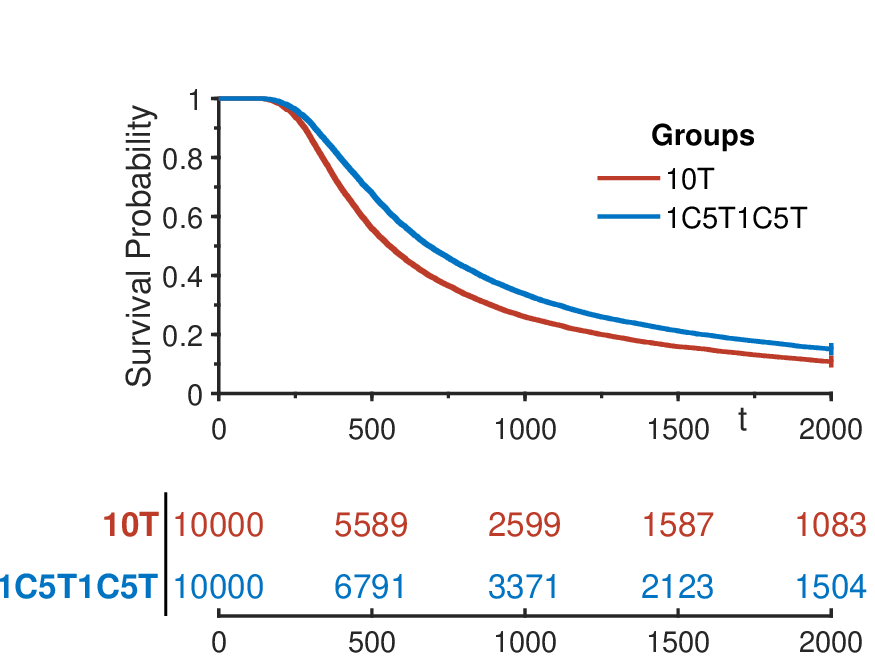}
    \caption{KM curves and risk tables comparing the 1C5T1C5T and 10T protocols for $L_2v=10^{9}$ (top) and $L_2v=2\cdot 10^{9}$ (bottom).}
    \label{fig:f12}
\end{figure}

\begin{figure}[!h]
    \centering
    \includegraphics[width=0.9\linewidth]{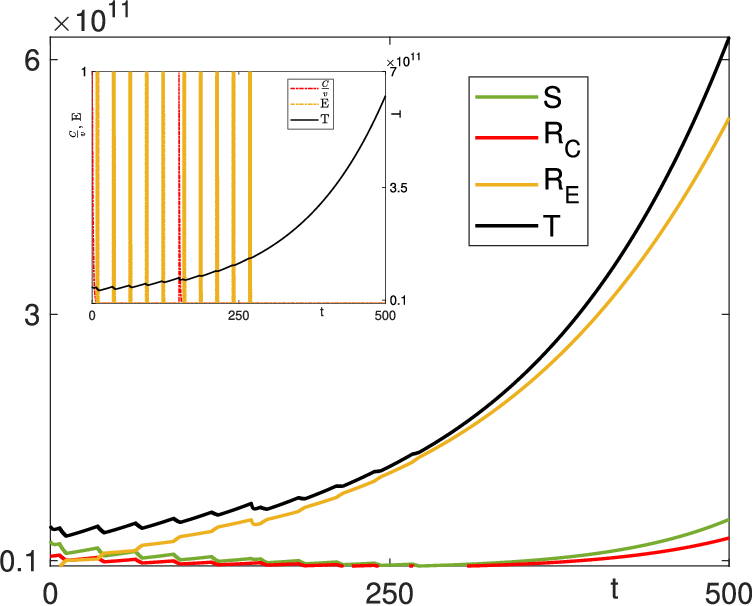}
    \caption{Dynamics for the MVP under the 1C5T1C5T protocol for $2v=10^{9}$.  }
    \label{fig:f13}
\end{figure}

Next, we consider the 1C5T1C5T protocol. The resulting median survival times are 653 days for $L_2v=10^{9}$, and 688 days for $L_2v=2\cdot 10^{9}$ (Fig. \ref{fig:f12}).  These outcomes correspond to gains of 95 days (17.03\%) and 130 days (23.30\%) relative to the 10T protocol. We also show the dynamics of tumor cells for this protocol applied to the MVP in Fig. \ref{fig:f13}. While the 1C5T1C5T strategy achieves survival benefits comparable to those of the 5T2C5T protocol, it appears less effective in targeting TMZ-resistant tumor cells. This observation is supported by the correlation results shown in Tables~\ref{t:t5b}--\ref{t:t6b}, which highlight the relative sensitivity of this protocol to resistance-related parameters.

\begin{figure}[]
    \centering
    \includegraphics[width=0.9\linewidth]{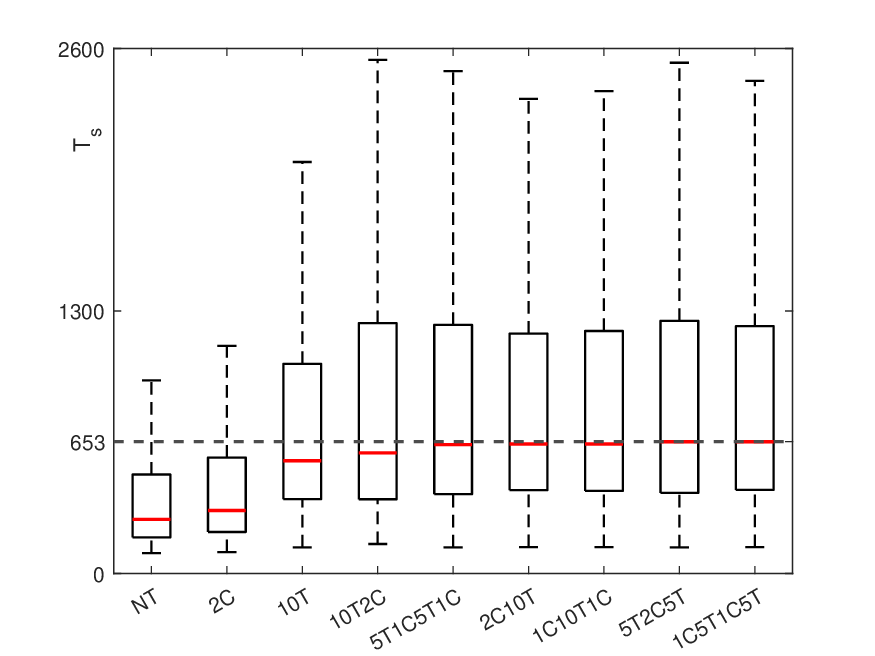}
       \includegraphics[width=0.9\linewidth]{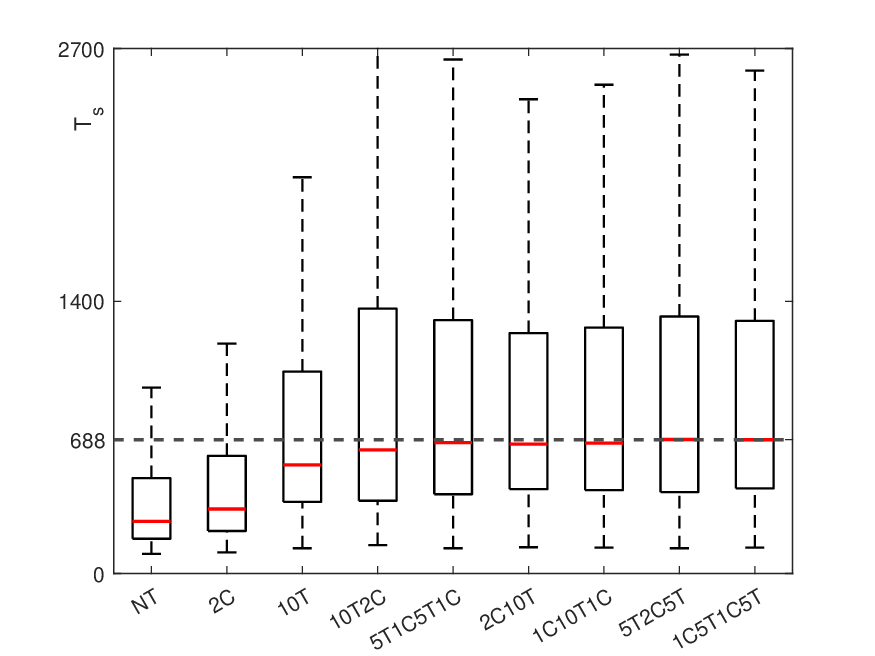}
    \caption{Box plot showing the efficacy of combined protocols for $2v=10^{9}$ (top) and $2v=2\cdot10^{9}$ (bottom).}
    \label{fig:f18}
\end{figure}

\begin{table}[!h]
\caption{The statistically significant correlations ($r$ and p-value) between ${T}_{s}^{1C5T1C5T}$ and model parameters for $2v=10^{9}$.}
\begin{tabular}{@{}ccc@{}}
Parameter & r & p-value \\ \hline
$r_{1}$ & -0.69 & 0.00 \\
$T_{0}$ & -0.13 & 0.00 \\
$\alpha_{1}$ & 0.12 & 0.00 \\
\label{t:t5b}
\end{tabular}
\end{table}

\begin{table}[!h]
\caption{The statistically significant correlations ($r$ and p-value) between  ${T}_{s}^{1C5T1C5T}/T_{s}^{NT}$ and model parameters {for $2v=10^{9}$}.}
\begin{tabular}{@{}ccc@{}}
Parameter & r & p-value \\ \hline
$\alpha_{1}$ & 0.66 & 0.00 \\
$\rho_{4}$ & 0.42 & 0.00 \\
$r_{1}$ & -0.2 & 0.00 \\
$\epsilon_{1}$ & 0.19 & 0.00\\
$T_{0}$ & 0.19 & 0.00 \\
$\rho_{2}, \rho_{3}$ & 0.14 & 0.00 \\
$\delta_{{2}}$ & -0.14 & 0.00 \\
\label{t:t6b}
\end{tabular}
\end{table}

Figure \ref{fig:f18} presents box plots of the survival times for all combined treatment protocols, using CAR-T cell doses of $L_2 v = 10^9$ and $L_2 v = 2 \cdot 10^9$. These plots illustrate the effectiveness of each protocol, highlighting that the 5T2C5T and 1C5T1C5T protocols are the most successful in terms of survival outcomes.

\textbf{Protocol comparison}. Here, we compare the combined protocols described earlier. We begin by calculating the pairwise correlations between the survival times of all protocols and find that the correlation coefficients are consistently around 0.99, with p-values close to zero. This high degree of correlation suggests that the protocols yield similar survival outcomes. This finding is further supported by the following analysis. For each patient, we assign a set of protocols. This set is constructed as follows: first, the protocol yielding the best survival outcome is assigned to the patient. Then, we include any protocol with a survival time that is no more than 5\% or 10\% shorter than the best protocol. This approach allows us to identify which virtual patients can benefit from multiple protocols and which might require a more individualized approach. When using a 5\% margin for protocol equivalence, we find that 6098 patients--approximately 61\% of the population--can be treated with any of the protocols. With a 10\% margin, 7528 patients--approximately 75\% of the population--are eligible for treatment with any protocol. These results indicate that more than half of the patients can be treated with any of the protocols with comparable outcomes.

\begin{table}[!h]
\caption{Shift in the median values of the model parameters for virtual patients with 3 suitable protocols.}
\begin{tabular}{@{}cccc@{}}
Parameter  & Shift in median value, \%  \\ \hline
$\rho_{4}$ & -32  \\
$r_{1}$ & 28  \\
$\rho_{2}, \rho_{3}$ & 17  \\
$\delta_{2}$ & -14  \\
$\alpha_{1}$ & -8  \\
$\epsilon_{1}$ & 7 \\
$T_{0}$ & 7  \\
 $\rho_{1}$ & -1 \\
  $\delta_{1}$ & 1 \\
\label{t:t_shift}
\end{tabular}
\end{table}

On the other hand, it is evident that some patients experience substantial improvements in survival time depending on the choice of protocol. To identify the regions of the parameter space associated with these patients, we select those for whom 1, 2, or 3 protocols yield similar results within a 10\% margin. This group includes approximately 1000 patients (approximately 10\% of the population). The shifts in the median values of key parameters for these patients are summarized in Table~\ref{t:t_shift}.  Our analysis reveals that patients with lower immune suppression, lower immune resistance, and higher growth rates or mitotic stimulation may benefit from a more individualized treatment approach, as these factors seem to play a significant role in determining which protocol is most effective for them.

\begin{figure}[]
    \centering
    \includegraphics[width=0.9\linewidth]{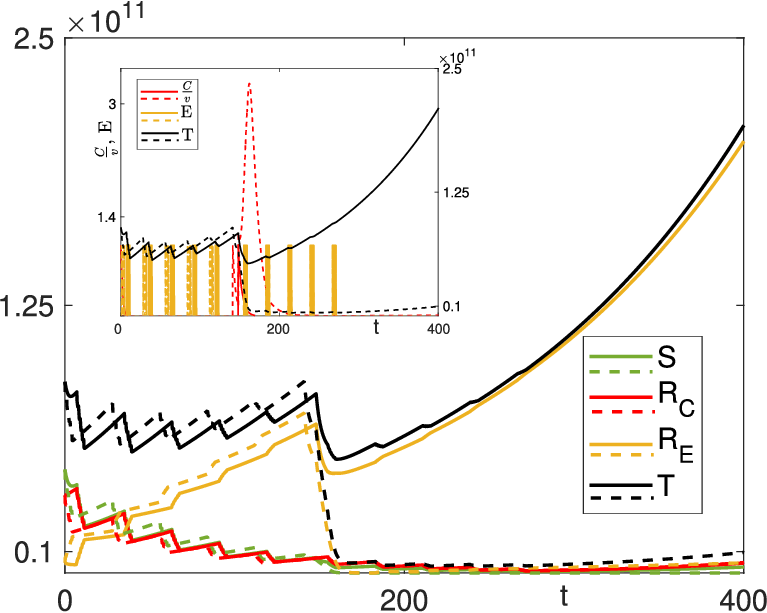}
       \includegraphics[width=0.9\linewidth]{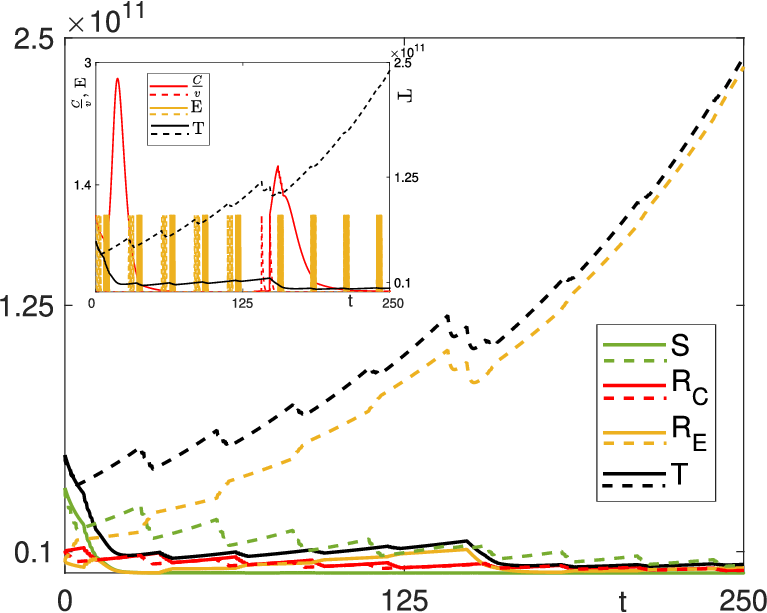}
    \caption{A comparison of tumor dynamics for 2 different virtual patients under the 5T2C5T (dashed line) and the 1C5T1C5T protocols (solid line) shows that 5T2C5T outperforms 1C5T1C5T (upper panel) and vice versa (bottom panel).   }
    \label{fig:f18b}
\end{figure}

Differences in protocol performance can, in some cases, be attributed to favorable conditions for CAR-T cell proliferation. For instance, although two of the best-performing protocols at the population level, 5T2C5T and 1C5T1C5T, show overall effectiveness, there are still individual patients for whom one protocol outperforms the other. Figure \ref{fig:f18b} illustrates tumor and treatment dynamics for two representative patients, where one of these protocols significantly outperforms the other. Notably, CAR-T cell proliferation is observed during the better-performing protocol.

 \begin{figure}[]
     \centering
     \includegraphics[width=0.9\linewidth]{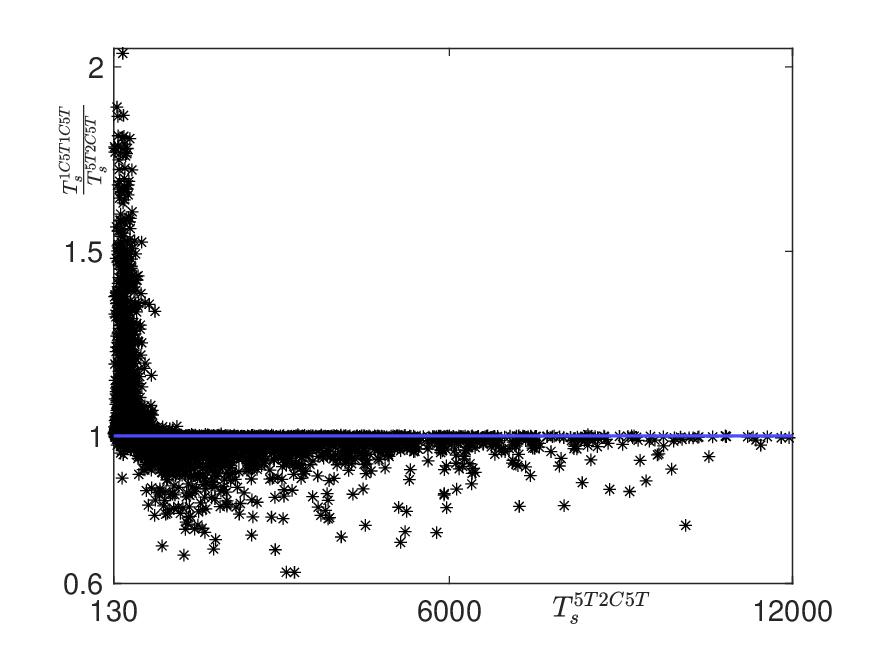}
     \caption{The correlation between the fraction $\frac{T_{s}^{1C5T1C5T}}{T_{s}^{5T1C5T}}$ and $T_{s}^{5T2C5T}$. }
     \label{fig:f18a}
 \end{figure}

Here, we focus on comparing the two best-performing protocols: 5T2C5T and 1C5T1C5T. Figure \ref{fig:f18a} shows the correlation between the difference $T_{s}^{1C5T1C5T}-T_{s}^{5T1C5T}$ and $T_{s}^{5T2C5T}$. Positive values of this difference indicate better performance of the 1C5T1C5T protocol, while negative values correspond to better performance of the 5T2C5T protocol.

From Fig. \ref{fig:f18a}, it is evident that the 1C5T1C5T protocol performs better for virtual patients with lower survival times, whereas the 5T2C5T protocol is more advantageous for those with longer survival times.

When comparing the performance of the 5T2C5T and 1C5T1C5T protocols in greater detail, we find that survival times for both protocols are almost linearly correlated, with a correlation coefficient of 0.99 and a p-value of 0. This suggests that, on average, if a virtual patient responds well to one protocol, that patient is likely to respond similarly to the other.

However, as shown in Fig. \ref{fig:f18a}, there are some differences in survival times for certain virtual patients. In particular, the group of patients where one protocol outperforms the other by more than 10 days consists of 3664 patients, approximately one-third of the virtual cohort. When the threshold for overperformance increases to 30 days, the number of patients decreases to 1840, roughly 18\% of the virtual cohort. Therefore, only a small fraction of virtual patients may derive substantially greater benefit from one protocol over the other.

 \begin{table}[!h]
 \caption{The statistically significant correlations ($r$ and p-value) between $T_{s}^{1C5T1C5T}/T_{s}^{5T2C5T}$ and model parameters for $2v=10^{9}$.}
 \begin{tabular}{@{}ccc@{}}
 Parameter & r & p-value \\ \hline
 $r_{1}$ & 0.32 & 0.00 \\
 $\alpha_{1}$ & -0.18 & 0.00 \\
 $\rho_{4}$ & -0.17 & 0.00 \\
 $\delta_{{{2}}}$ & -0.1 & 0.00 \\
 \label{t:ttt}
 \end{tabular}
 \end{table}

To understand the reasons behind the differences in performance, we investigate the correlations between the ratio $T_{s}^{1C5T1C5T}/T_{s}^{5T2C5T}$ and the model parameters. The results are presented in Table \ref{t:ttt}. Consistent with previous findings, the growth rate ($r_{1}$) emerges as the most important parameter. This finding reinforces that overall survival is largely determined by the intrinsic proliferative capacity of the tumor. Furthermore, the 1C5T1C5T protocol tends to perform better for virtual patients with shorter survival times, while the 5T2C5T protocol is more advantageous for those with longer survival times.

 \begin{figure}[]
     \centering
     \includegraphics[width=0.9\linewidth]{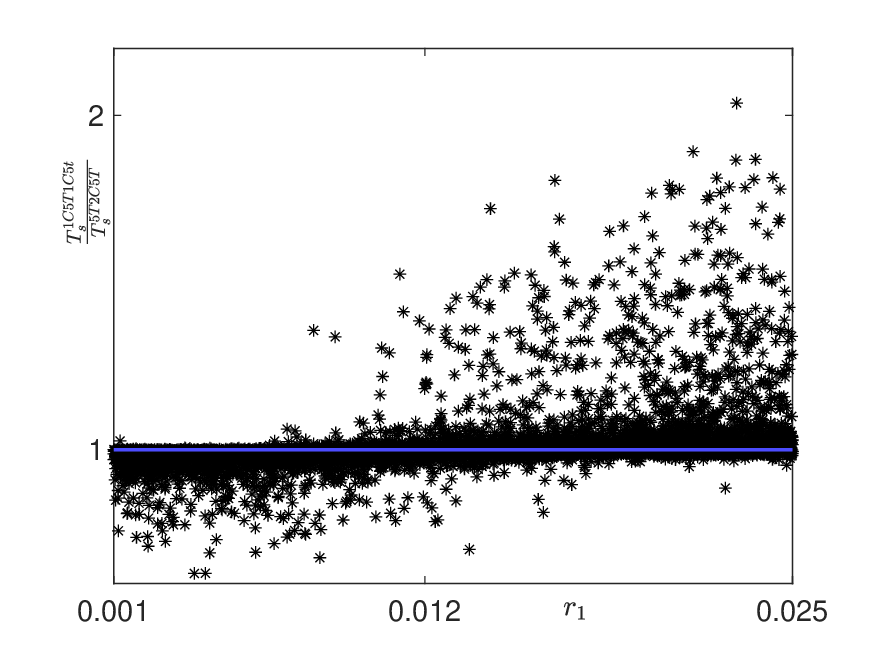}
        \includegraphics[width=0.9\linewidth]{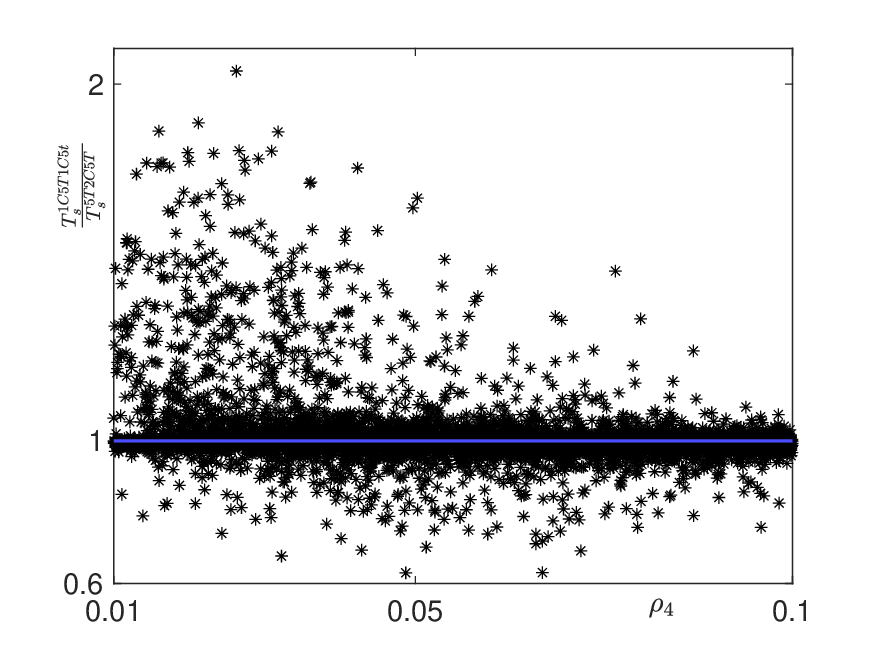}
     \caption{The correlation between $\dfrac{T_{s}^{1C5T1C5T}}{T_{s}^{5T2C5T}}$ and the parameter $r_1$ (top panel) or the parameter $\rho_4$ (bottom panel). }
     \label{fig:ttt}
 \end{figure}

Figure \ref{fig:ttt} presents the correlations between the ratio $T_{s}^{1C5T1C5T}/T_{s}^{5T2C5T}$ and the parameters $r_{1}$ and $\rho_{4}$. We observe that the 5T2C5T protocol performs better for tumors with lower $r_{1}$, and there is no correlation between its performance and $\rho_{4}$. In contrast, the 1C5T1C5T protocol is more effective for tumors with higher $r_{1}$ and lower $\rho_{4}$.
These findings highlight the potential for personalized treatment strategies, emphasizing the importance of tailoring therapies based on specific tumor characteristics, which justifies further research into personalized treatment protocols.

\subsubsection{Fast-growing TMZ-resistant cells}\label{sec:res_fast_grow}

In the previous Sections, it was assumed that $r_{2}=r_{1}/2$, meaning that TMZ-resistant cells proliferate at a slower rate than the other two populations \cite{campos2014aberrant,stepanenko2016temozolomide,yuan2018abt,dai2018scd1}.  However, in some cases, it has been observed that TMZ-resistant cells ($R_{E}$) grow similarly or faster than sensitive cells \cite{gupta2014discordant,stepanenko2016temozolomide,dai2018scd1,Delobel_PLOS}.

\begin{table}[]
\begin{center}
\caption{Median survival times $\widetilde{T}_{s}^{v}$ in days for different protocols with $r_2=r_1$ (second and third columns) and $r_2=2r_1$ (fourth and fifth columns), for $L_2v=10^9$ and $L_2v=2\cdot 10^9$. }
\vspace{0.25cm}
\label{t:t7}
\begin{center}
\begin{tabular}{c|cc|cc}
 & \multicolumn{2}{|c|}{$r_2=r_1$} & \multicolumn{2}{|c}{$r_2=2r_1$}\\
  \scriptsize{ Protocol} &\scriptsize{$\widetilde{T}_{s}, 2v=10^{9}$}& \scriptsize{$\widetilde{T}_{s}, 2v=2\cdot 10^{9}$} & \scriptsize{$\widetilde{T}_{s}, 2v=10^{9}$} & \scriptsize{$\widetilde{T}_{s}, 2v=2 \cdot 10^{9}$}
\\ \hline
NT & 264 & - & 221 & - \\
2C & 310 & 330 & 275 & 302 \\
10T & 329 & - & 179 & - \\
5T2C5T & 397 & 455 & 182 & 186 \\
2C10T & 427 & 448 & 241 & 254 \\
1C5T1C5T & 456 & 507 & 239 & 260 \\
5T1C5T1C & 380 & 421 & 181 & 183 \\
10T2C & 334 & 339 & 179 & 179\\
1C10T1C & 425 &  447 &  234 & 244\\
\end{tabular}
\end{center}
\end{center}
\end{table}

In those cases, it is worth considering which treatment (or treatments) is (are) the optimal one(s) among those under consideration.

\begin{figure}[!h]
    \centering
    \includegraphics[width=0.47\linewidth]{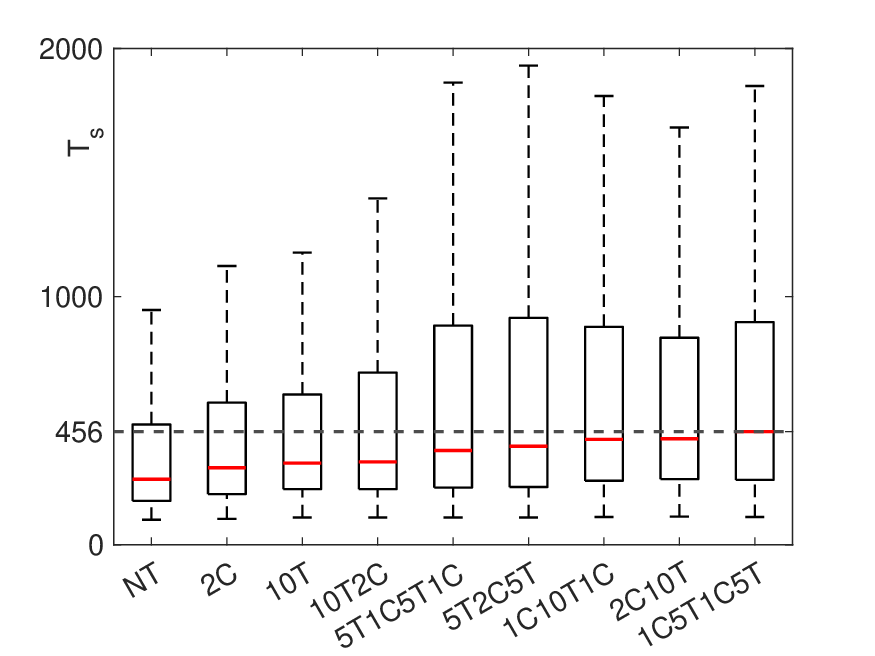}
       \includegraphics[width=0.47\linewidth]{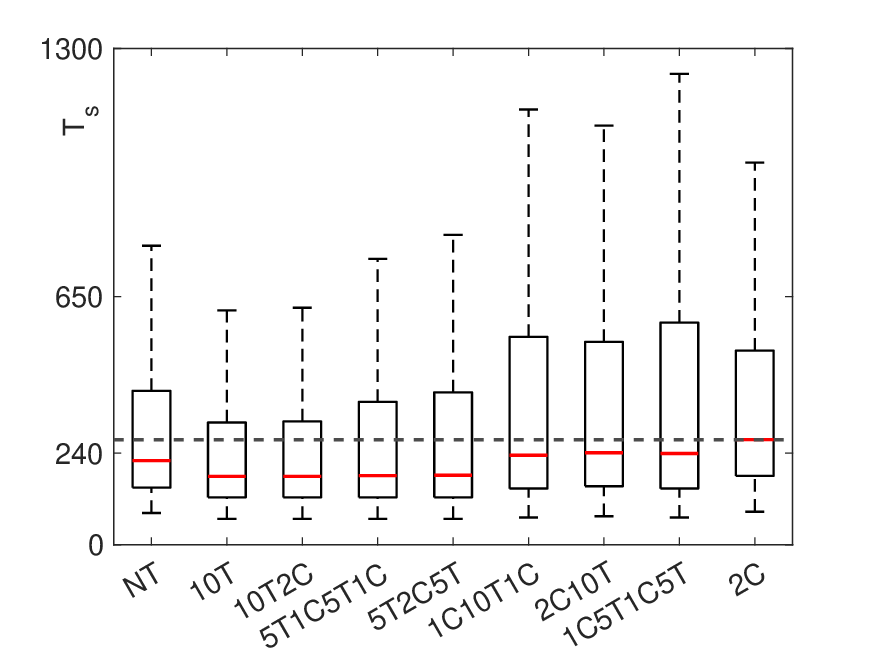}
    \caption{Box plot showing the efficacy of the selected combined protocols for $r_{2}=r_{1}$ (left) and $r_{2}=2r_{1}$ (right) for $2v=10^{9}$.}
    \label{fig:f19}
\end{figure}

We show overall performance of the considered protocols in Table \ref{t:t7} and in Figure \ref{fig:f19}.

\begin{table}[!h]
\caption{The statistically significant correlations ($r$ and p-value) between ${T}_{s}^{1C5T1C5T}$ and model parameters for $r_{2}=r_{1}$, $2v=10^{9}$.}
\begin{tabular}{@{}ccc@{}}
Parameter & r & p-value \\ \hline
$r_{1}$ & -0.66 & 0.00 \\
$\rho_{4}$ & -0.19 & 0.00 \\
$T_{0}$ & -0.15 & 0.00 \\
\label{t:t5c}
\end{tabular}
\end{table}

\begin{table}[!h]
\caption{The statistically significant correlations ($r$ and p-value) between  ${T}_{s}^{1C5T1C5T}/T_{s}^{NT}$ and model parameters for $r_{2}=r_{1}$, $2v=10^{9}$.}
\begin{tabular}{@{}ccc@{}}
Parameter & r & p-value \\ \hline
$\rho_{4}$ & -0.62 & 0.00 \\
$\alpha_{1}$ & 0.31 & 0.00 \\
$\rho_{2}, \rho_{3}$ & 0.26 & 0.00 \\
$r_{1}$ & -0.12 & 0.00 \\
$\delta_{1}$ & -0.11 & 0.00 \\
\label{t:t6c}
\end{tabular}
\end{table}

We find that the best-performing protocol is the 1C5T1C5T protocol with $r_{2} = r_{1}$, such as with $r_{2} = r_{1}/2$. The second best protocol is 2C10T. The correlations between survival time and model parameters for the best-performing protocol, 1C5T1C5T, are presented in Tables \ref{t:t5c} and \ref{t:t6c}. From these tables, we observe that survival time is strongly correlated with tumor growth rate ($r_1$), and weakly correlated with tumor immune suppression ($\rho_4$) and its initial size ($T_0$).

However, when considering the improvement of a combined protocol relative to the absence of treatment, the most important parameter becomes tumor immune suppression ($\rho_4$), with the efficiency of TMZ ($\alpha_1$) and mitotic stimulations ($\rho_2$ and $\rho_3$) also playing significant roles.

The correlations between the model parameters, survival time, and its improvement for the 2C10T and 5T2C5T protocols are very similar to those for 1C5T1C5T, so we do not present them here. Figs. \ref{fig:f20a}, \ref{fig:f20b}, and \ref{fig:f20c} show tumor dynamics for the MVP and KM curves for the 1C5T1C5T, 2C10T, and 5T2C5T protocols, respectively.

\begin{figure}[!h]
    \centering
    \includegraphics[width=0.45\linewidth]{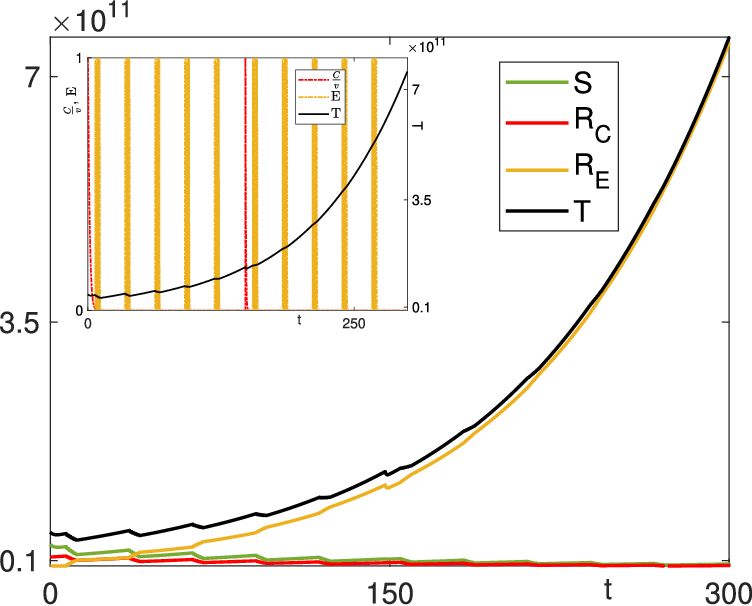}
       \includegraphics[width=0.45\linewidth]{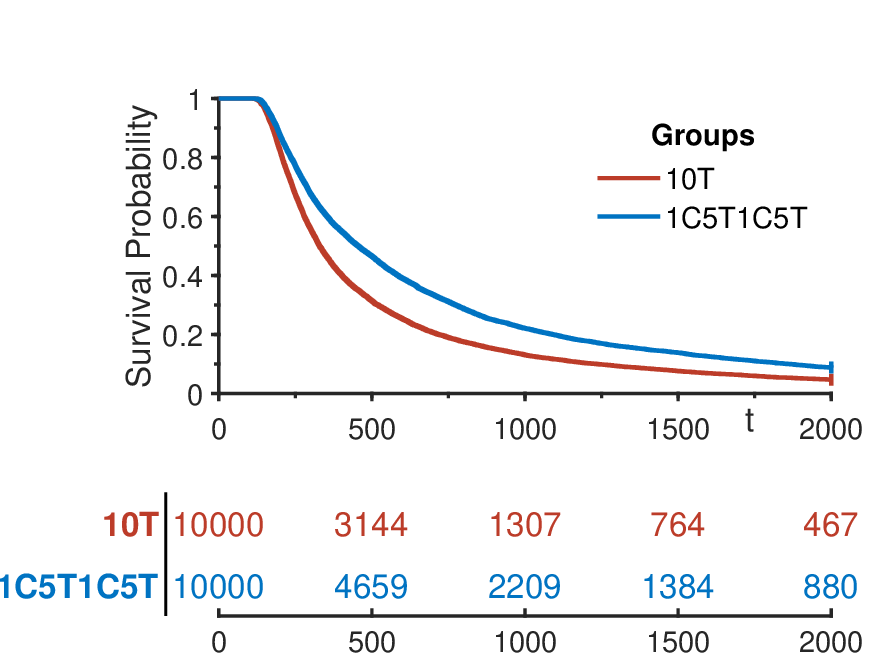}
    \caption{Dynamics for the MVP (left) and KM curves (right) for $r_{2}=r_{1}$ under the 1C5T1C5T protocol, for $v=0.5\cdot 10^{9}$.}
    \label{fig:f20a}
\end{figure}

\begin{figure}[!h]
    \centering
    \includegraphics[width=0.45\linewidth]{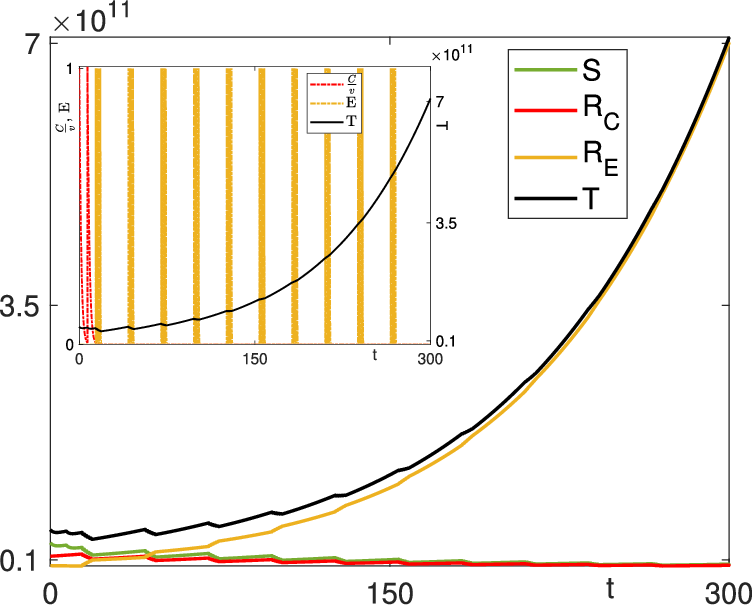}
       \includegraphics[width=0.45\linewidth]{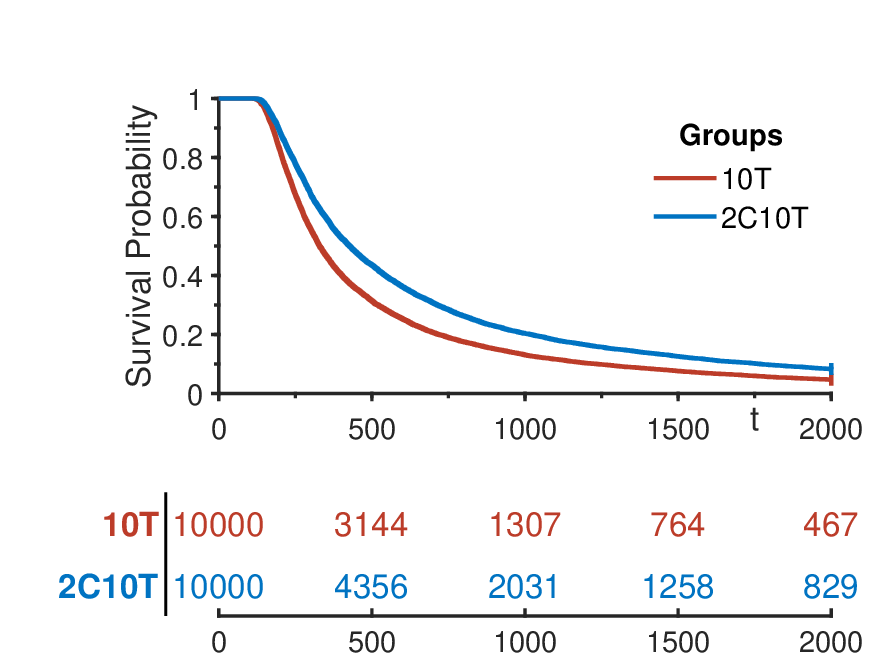}
    \caption{Dynamics for the MVP (left) and KM curve (right) for  $r_{2}=r_{1}$ under the 2C10T protocol, for $v=0.5\cdot 10^{9}$.}
    \label{fig:f20b}
\end{figure}

\begin{figure}[!h]
    \centering
    \includegraphics[width=0.45\linewidth]{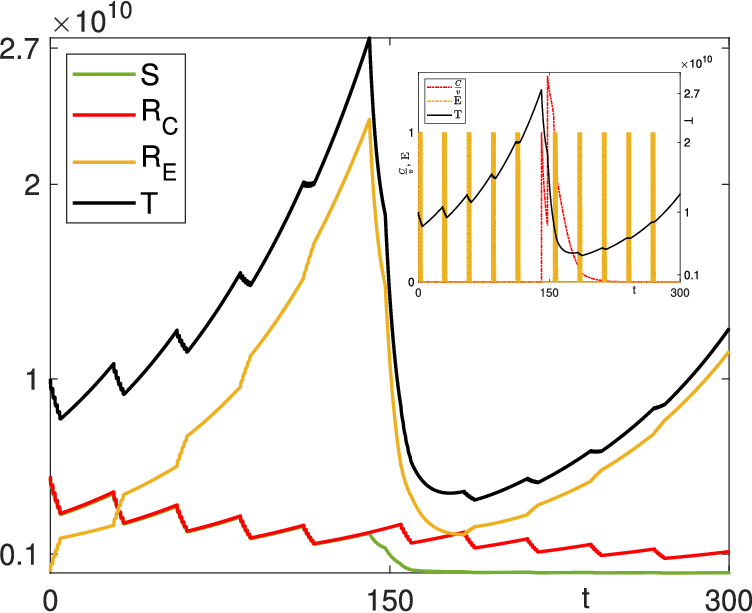}
       \includegraphics[width=0.45\linewidth]{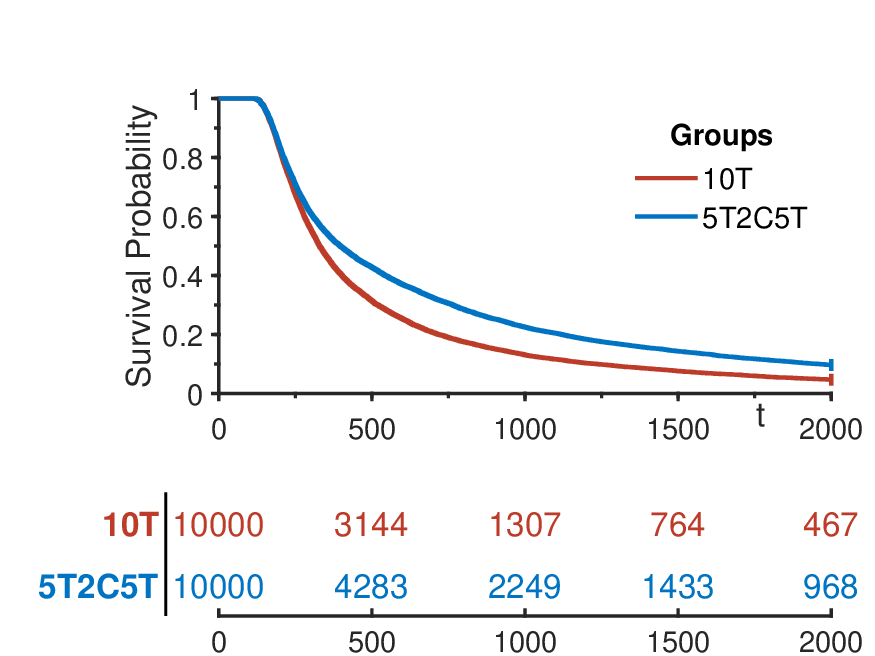}
    \caption{Dynamics for the MVP (left) and KM curve (right) for  $r_{2}=r_{1}$ the 5T2C5T protocol, for $v=0.5\cdot 10^{9}$.}
    \label{fig:f20c}
\end{figure}

\begin{table}[!h]
\caption{The statistically significant correlations ($r$ and p-value) between ${T}_{s}^{2C}$ and model parameters for $r_{2}=2r_{1}$, $2v=10^{9}$.
}
\begin{tabular}{@{}ccc@{}}
Parameter & r & p-value \\ \hline
$r_{1}$ & -0.67 & 0.00 \\
$T_{0}$ & -0.19 & 0.00 \\
$\rho_{4}$ & -0.13 & 0.00 \\
\label{t:t5d}
\end{tabular}
\end{table}

\begin{table}[!h]
\caption{The statistically significant correlations ($r$ and p-value) between  ${T}_{s}^{2C}/T_{s}^{NT}$ and model parameters for $r_{2}=2 r_{1}$, $2v=10^{9}$.}
\begin{tabular}{@{}ccc@{}}
Parameter & r & p-value \\ \hline
$\rho_{4}$ & -0.69 & 0.00 \\
$\delta_{1}$ & -0.26 & 0.00 \\
$\rho_{2}, \rho_{3}$ & 0.23 & 0.00 \\
$T_{0}$ & -0.21 & 0.00 \\
$\delta_{1}$ & -0.16 & 0.00 \\
\label{t:t6d}
\end{tabular}
\end{table}

Interesting, for the case with $r_{2}=2r_{1}$, synergistic effects are not founded in any of the investigated combined protocols. Such as shown in the TMZ monotherapy Section in Fig. \ref{fig:f6}, in the combined protocols, it is better not to administer TMZ at all. In fact, the best protocol reported in Table \ref{t:t7} is the 2C protocol, where only 2 CAR-T injections are administered.

The correlations between survival time and model parameters for the best-performing protocol, 2C, are presented in Table \ref{t:t5d}. We observe that survival time is strongly correlated with tumor growth rate ($r_1$), and weakly correlated with tumor immune suppression ($\rho_4$) and its initial size ($T_0$).

When considering the improvement of the 2C protocol in relation to the absence of treatment reported in Table \ref{t:t6d}, the most important and highly correlated parameter becomes tumor immune suppression ($\rho_4$), with initial percentage of CAR-T-resistant cells ($\delta_{2}$), mitotic stimulations ($\rho_2$ and $\rho_3$), initial tumor size ($T_{0}$), and initial percentage of TMZ-resistant cells ($\delta_{1}$), also playing roles. Note that even here, with only CAR-T monotherapy, the initial percentage of TMZ resistant cells correlates with improvement is survival due to the high aggressiveness ($r_2$) of the TMZ resistant population ($R_E$).

Figure \ref{fig:f20d} shows the tumor dynamics for the MVP and KM curves for the 2C protocol.

\begin{figure}[!h]
    \centering
    \includegraphics[width=0.45\linewidth]{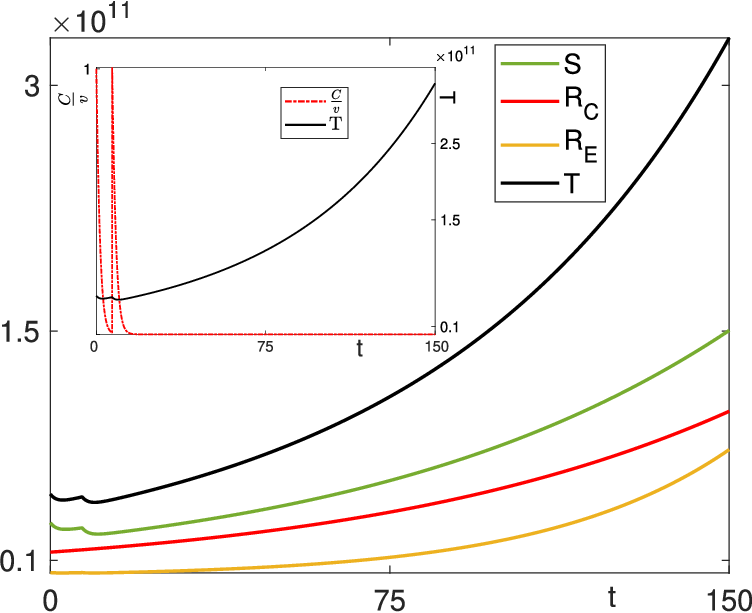}
       \includegraphics[width=0.45\linewidth]{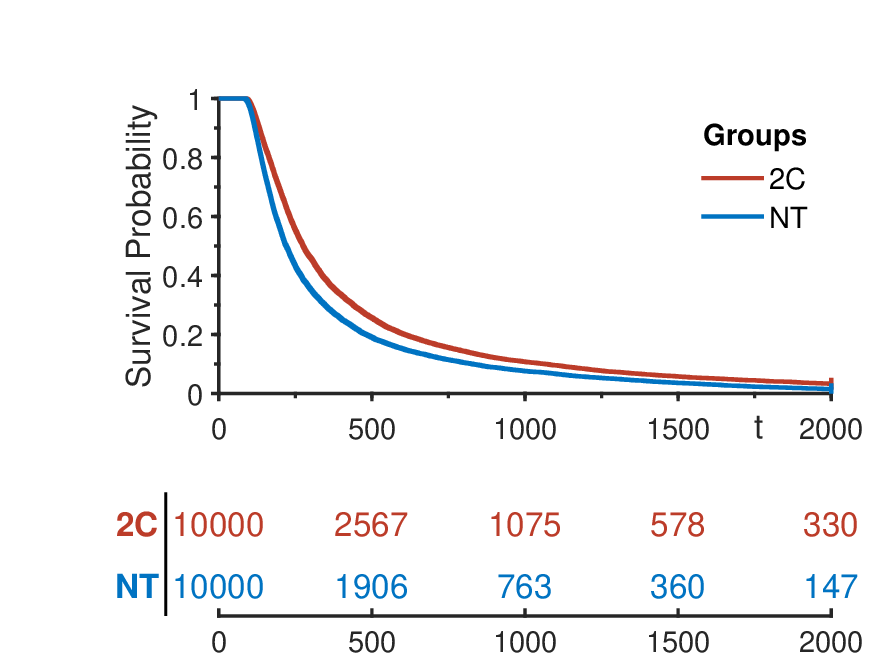}
    \caption{Dynamics for the MVP (left) and KM curve (right) for $r_{2}=2r_{1}$ under the 2C protocol for $v=0.5\cdot 10^{9}$.}
    \label{fig:f20d}
\end{figure}

\section{Discussion}\label{sec:conclusions}

This Section discusses the developed model and its results. We begin with insights from mathematical analysis, followed by simulations findings, which are categorized into monotherapy outcomes, combined therapy outcomes, and fast-growing TMZ-resistant cell cases. Finally, we address future research and general conclusions.

\subsection{The mathematical model}

In this {work}, we propose a mathematical model to investigate potential improvements in glioma treatment by combining two therapies: temozolomide (TMZ) and CAR-T cell therapy. TMZ is a standard treatment for gliomas \cite{friedman2000temozolomide}, often used in multimodal therapies such as the Stupp protocol \cite{stupp_protocl2005}. On the other hand, CAR-T cell therapy is a novel treatment, which has shown promising success in treating non-solid tumors like leukemias \cite{Maude,Miliotou}. Preclinical studies in animal models have demonstrated the potential of combining these therapies \cite{suryadevara2018temozolomide,combined_CART_TMZ}, although their clinical effectiveness is still under investigation in an ongoing trial (NCT04165941) \cite{TMZ_CART_trial}.

Our goal is to contribute to and inform future clinical trials through the insights gained from this work. The strong agreement between simulation results for both TMZ and CAR-T cell monotherapies and real-world clinical findings--discussed below--validates the reliability of the proposed model. This alignment provides confidence in its predictive power and its potential utility in designing and optimizing future combined treatment protocols.

Note that while the primary focus is on gliomas, the model could be adapted and calibrated for other cancers with similar biological mechanisms, offering a basis for similar investigations into combined CAR-T cell therapy and chemotherapy for solid cancers, a new promising and under investigation approach \cite{Moon}.

\subsection{Mathematical Analysis}

{After constructing the model, we analyze it}, initially considering a single dose of both treatments, followed by an analysis with continuous treatment of both therapies.
Interestingly, with a single dose of both treatments, tumor eradication is not possible, which is consistent with clinical observations and the challenges posed by these types of tumors \cite{gliomas2024}.

However, when continuous treatment is applied, we {identify} a threshold for both TMZ and CAR-T cell dosages at which tumor eradication becomes achievable. Importantly, the values presented in Proposition \ref{p:p5} are feasible: the daily critical CAR-T cell dosage $V$ is approximately $5.3\cdot10^{7}$ $\mbox{cells}\cdot \mbox{day}^{-1}$, and when the daily maximal TMZ dosage ($E_0 = 1$) is applied, the critical value for the sum of TMZ efficacy against the tumor and its transition rate from sensitive to TMZ-resistant cells ($\alpha_1 + \epsilon_1$) is approximately 0.21, based on the maximum sensitive growth rate $r_1$ from Table \ref{tab:paramenters}.
Thus, we {find} conditions on the dosages of CAR-T cells and the efficiency and dosage of TMZ that ensure the stability of the equilibrium with zero tumor cells under daily constant treatment.

Continuous treatment is not feasible due to patient toxicity constraints. Therefore, we propose an impulsive treatment strategy, where CAR-T cells and TMZ doses above the critical thresholds are administered {a finite number of times}, driving the system governed by \eqref{eqS}--\eqref{eqE} towards a tumor-free equilibrium, while maintaining treatment within tolerable toxicity levels, thereby ensuring clinical feasibility.

\subsection{TMZ monotherapy}

Regarding TMZ monotherapy applications, our \textit{in silico} trial simulations show that TMZ achieves a double median survival compared to no treatment, gaining almost 8 months (250 days) in median survival. There have been no randomized trials that compared standard chemoradiotherapy with TMZ monotherapy for MG. However, TMZ has been administered alone in some elderly patients. Scott and colleagues conducted a retrospective review of 206 patients and found that patients receiving chemotherapy had a median survival gain of almost 8 months compared to patients not receiving chemotherapy \cite{scott2011aggressive}, as in our \textit{in silico} results.

The relation between the growth rates of sensitive and TMZ-resistant cells is not well characterized \cite{campos2014aberrant,stepanenko2016temozolomide,yuan2018abt,dai2018scd1,gupta2014discordant,stepanenko2016temozolomide,dai2018scd1,Delobel_PLOS}. Importantly, this improvement persists even when TMZ-resistant cells proliferate at rates equal to those of sensitive cells. Administering 10 cycles of TMZ consistently outperforms shorter treatment durations, emphasizing the importance of sustained therapy, as in the Stupp protocol \cite{stupp_protocl2005}--the standard protocol for high-grade gliomas since 2005--where, after 6 weeks of concomitant chemoradiotherapy, adjuvant TMZ cycles are administered until the patient tolerates TMZ toxicity. However, when TMZ-resistant cells proliferate at rates greater than those of sensitive cells, it is more effective to avoid administering TMZ, as virtual patients show no response. This aligns with clinical observations for highly aggressive gliomas, where approximately 50\% of glioblastoma patients do not respond to TMZ treatment \cite{lee2016temozolomide}.

Focusing again on the case with $r_1=0.5r_2$ and the correlation between the model parameters and the survival rate applying 10 TMZ cycles, the parameter showing the highest correlation is the tumor proliferation rate $r_1$, followed by the tumor size $T_0$ and the efficacy of TMZ against the tumor $\alpha_1$. The importance of slow tumor growth as a good prognostic biomarker is well-known: the overexpression of Epidermal Growth Factor Receptor (EGFR) is an indicator of a poor prognosis in overall survival \cite{li2018prognostic}. In addition, the Ki-67 level, a protein marker associated with cell proliferation, serves as a significant prognostic factor in gliomas \cite{chen2015ki}. Higher Ki-67 levels indicate increased tumor cell proliferation, correlating with more aggressive tumor behavior and poorer patient outcomes.
If we now analyze the correlation between model parameters and the improvement in survival compared to untreated cases (the $T_{s}^{10T}/T_{s}^{NT}$ ratio), the most influential parameter is $\alpha_1$. This finding is consistent with \cite{skaga2022functional}, where the authors identified TMZ sensitivity in individual glioblastoma stem cell cultures as a predictor of patient survival.

\subsection{CAR-T monotherapy}

We then analyze the impact of CAR-T cell monotherapy on survival in virtual patients with MG. First, we examine how tumor immunosuppression ($\rho_4$) influences median survival time. Our findings indicate that only patients with low immunosuppression levels in the tumor microenvironment ($\rho_4 \leq 0.1$) derive significant benefit from CAR-T cell therapy. This result aligns with the well-documented challenges posed by the immunosuppressive nature of the solid tumor microenvironment \cite{kringel2023chimeric}.

Next, we show that a higher number of injected CAR-T cells leads to improved survival outcomes, consistent with previous findings \cite{Ode1,Bodnar2023amcs,Forys}. In an experimental study, the authors observed that a low dose of CAR-T cells suppressed GL261/EGFRvIII tumor growth, whereas a high dose resulted in complete eradication of xenograft tumors \cite{chen2019antitumor}. We further investigate the effect of dose distribution on survival and find that single-dose administration at the start is optimal due to the rapid initial expansion of CAR-T cells. However, to mitigate potential adverse and toxicological effects, we adjust the number of CAR-T doses to two ($L_2=2$).

We conduct an \textit{in silico} trial to assess the effect of CAR-T cell monotherapy on median survival, observing an increase of approximately seven months. This finding aligns with a recent review reporting median survival times ranging from 5.5 to 11.1 months across analyzed studies \cite{agosti2024car}.

Next, we analyze the influence of model parameters on patient survival and find that the most correlated parameter is the growth rate $r_1$ of the sensitive tumor population. Interestingly, when considering the correlation between improvement in survival relative to the untreated case--expressed as the $T_{s}^{2C}/T_{s}^{NT}$ ratio--the most influential parameters are those directly related to CAR-T cell treatment: $\rho_4$, $\rho_2$, $\rho_3$, and the fraction of tumor cells resistant to CAR-T therapy, $\delta_1$. These results align with experimental and clinical findings, which identify major challenges in CAR-T therapy for MG as the immunosuppressive tumor microenvironment, insufficient cell trafficking, rapid expansion, and tumor heterogeneity \cite{kringel2023chimeric}.

Indeed, a CAR-T cell product that is resistant to tumor-induced immunosuppression ($\rho_4$) and exhibits rapid expansion towards tumor cells ($\rho_2$ and $\rho_3$) may play a pivotal role in disease control, as already demonstrated in leukemia treatment \cite{Maude,Miliotou}. The efficacy of CAR-T therapy can be further evaluated based on the quality of the patient’s effector cells prior to genetic modification and the CAR-T cell generation employed in the treatment.

This aligns closely with the well-established and critical role of CAR-T cell killing efficiency and rapid stimulation in shaping the therapeutic dynamics of this approach. Indeed, significant efforts have been made by the biomedical community to develop improved CAR-T cell products, characterized by faster expansion, reduced susceptibility to immunosuppression, and enhanced killing efficiency \cite{Sterner}.

Moreover, our model underscores the critical role of tumor-induced immunosuppression, represented by the parameter $\rho_4$, in influencing the behavior of the cell populations studied. The results indicate a clear trend: as the value of $\rho_4$ increases, the effectiveness of the therapy decreases. This finding highlights the potential of incorporating immune checkpoint inhibitors as a strategy to modulate immune responses and prevent tumor cells from deactivating CAR-T cells \cite{xu2020immunotherapy}. By mitigating the tumor-induced inactivation rate of CAR-T cells, this approach could enhance therapeutic outcomes and improve patient prognosis.


\subsection{TMZ and CAR-T combined therapy}

We study the combination of TMZ and CAR-T cell therapies, exploring six different combination protocols to identify the optimal approach to improve survival time. These results are compared with standard 10T treatment to assess potential improvements in therapeutic outcomes.

The best results are obtained with the 5T2C5T and 1C5T1C5T protocols, particularly in the case where the proliferation rate of sensitive cells ($r_1$) is higher than that of resistant cells ($r_2$). Our \textit{in silico} trials predict median survival times of nearly 650 days, representing an improvement of approximately 400, 340, and 100 days compared to no treatment, CAR-T cell monotherapy, and TMZ monotherapy, respectively. Currently, no clinical trial results are available for direct comparison with our \textit{in silico} results.

An experimental study has shown that a combined treatment regimen of MGMT-modified $\gamma\delta$ T cells and TMZ chemotherapy improves survival in \textit{in vivo} primary high-grade gliomas compared to both monotherapy treatments \cite{combined_CART_TMZ}. Moreover, it has been reported that concomitant treatments outperform sequential treatments in terms of survival. Our \textit{in silico} results support the efficacy of combining TMZ with CAR-T cell therapy. Additionally, the superiority of the 1C5T1C5T protocol, where the two treatments alternate and are present simultaneously in virtual patients, aligns with \textit{in vivo} findings \cite{combined_CART_TMZ}.

We further investigate the correlations between model parameters and survival, finding that, even with combined treatment, the biological process most strongly associated with survival is tumor growth. Overexpression of EGFR \cite{li2018prognostic} and Ki-67 levels \cite{chen2015ki} remain reliable prognostic biomarkers in combined treatment applications. Interestingly, when considering survival improvement relative to the untreated case, the most critical parameter is the killing efficacy of TMZ, followed by tumor immunosuppression. This highlights the dominant role of TMZ in combined treatments.

Since combined protocols achieve similar median survival times, we conduct a detailed comparison to identify potential differences. Notably, most virtual patients exhibit similar survival outcomes regardless of the combined protocol used. If virtual patients respond well to one protocol, they are likely to respond similarly to others. However, this preliminary analysis does not account for toxicity, which could vary significantly between protocols and impact clinical decisions.

We then focus on the two best-performing protocols: 5T2C5T and 1C5T1C5T. While their median survival outcomes are nearly identical, a subset of the virtual cohort exhibit significant differences (up to double survival times) when treated with one protocol versus the other. Specifically, the 1C5T1C5T protocol is more effective for patients with shorter survival times, whereas the 5T2C5T protocol shows greater advantages for those with longer survival times.

This observation is further supported by correlations between the ratio $T_s^{1C5T1C5T} / T_s^{5T2C5T}$ and the tumor growth rate $r_1$. Specifically, the 5T2C5T protocol is more effective for tumors with lower $r_1$, while no significant correlation is observed with immunosuppression ($\rho_4$). Conversely, the 1C5T1C5T protocol is more beneficial for tumors with higher $r_1$ and lower $\rho_4$.

These findings underscore the potential and importance of personalized treatment strategies in oncology, highlighting its potential for more precise and effective cancer care. By tailoring therapies to the specific characteristics of each patient's tumor, it becomes possible to optimize treatment efficacy, thereby improving outcomes.

\subsection{Cases with Fast-growing TMZ-resistant Cells}

Finally, given the poorly defined relationship between TMZ-resistant and TMZ-sensitive cell growth in MG  \cite{campos2014aberrant,stepanenko2016temozolomide,yuan2018abt,dai2018scd1,gupta2014discordant,stepanenko2016temozolomide,dai2018scd1,Delobel_PLOS}, we investigate how the efficacy of combined protocols changes with varying levels of aggressiveness of resistant MG. Importantly, we found that the 1C5T1C5T protocol remains the most effective in the case with $r_2=r_1$. This robust finding supports the recommendation of the 1C5T1C5T protocol for future clinical trials, except in cases of highly aggressive TMZ-resistant tumors ($r_2 = 2r_1$).

We also investigate the correlations between survival and model parameters, finding that, even in moderately aggressive TMZ-resistant MG ($r_2=r_1$), the biological process most strongly related to survival is tumor growth. However, when we examine the improvement in terms of survival compared to the untreated case, the most important parameter is tumor immunosuppression, followed by TMZ killing efficacy. These findings underscore the critical role of CAR-T cells in managing aggressive TMZ-resistant populations within combination therapy protocols.

Instead, for highly aggressive TMZ-resistant cells ($r_2=2r_1$), no synergistic benefit from combining TMZ with CAR-T cells. In fact, the best protocol in this scenario is 2C, consistent with the findings from TMZ monotherapy, where TMZ inadvertently favors expansion of the TMZ-resistant population, which can only be controlled through CAR-T cell therapy. This highlights the potential of CAR-T cells in combating TMZ-resistant aggressive MG, such as glioblastomas, where approximately 50\% of patients do not respond to TMZ \cite{lee2016temozolomide}.

In this aggressive scenario, tumor growth of the CAR-T-resistant population is most strongly linked to overall survival. However, when analyzing survival improvements relative to the untreated case, tumor immunosuppression becomes the most influential parameter. Additional contributing factors include the initial fractions of CAR-T- and TMZ-resistant cells, mitotic stimulation, and initial tumor burden. Interestingly, even in protocols involving CAR-T monotherapy, the fraction of TMZ-resistant cells influences survival outcomes due to the high proliferative capacity ($r_2$) of this subpopulation.

Together, these findings highlight the essential role of CAR-T cell therapy in controlling aggressive, TMZ-resistant MG and provide valuable insights for optimizing combination treatment strategies.

\subsection{Future Research}

{Let us remark that for the ranges of parameters given in Table \ref{tab:paramenters} and considered initial conditions we do not observe any complex dynamics in the system \eqref{eqS}--\eqref{eqE}.
This fact may be connected with the finite number of treatment applications and the stability of the invariant hyperplane $K-S_{1}-S_{2}-R=0$ of \eqref{eqS}--\eqref{eqE} at $C=E=0$.
It may also be related to the high growth rates of tumor cells, which, as we have demonstrated above, define the dynamics.
On the other hand, system \eqref{eqS}--\eqref{eqE} may exhibit more complex dynamics for other values of the parameters and/or initial conditions. In this work, our primary goal is to understand how CAR-T therapy and chemotherapy can be effectively combined to maximize median survival in the virtual population of MG virtual patients. On the other hand, it may be an interesting problem to explore possible types of dynamics governed by the system \eqref{eqS}--\eqref{eqE}, which should be considered elsewhere.}

Moreover, it could be essential to consider the role of spatial effects in CAR-T cell therapy, particularly for gliomas, which exhibit infiltrative behavior \cite{owens2024spatiotemporal}. While our model provides valuable insights into the interactions between CAR-T cell therapy, TMZ, and MG cells, spatial dimension is a critical factor in influencing these interactions. To enhance the accuracy and applicability of our findings, future studies should incorporate spatial elements to better assess the efficacy of the proposed treatments in a more realistic setting.

Finally, we aim to extend the proposed mathematical model by incorporating experimental data to more precisely calibrate the parameters, refining our understanding of the treatment dynamics, and evaluating the significance of key model parameters. Calibrating and validating the model on real data would also confirm that all essential biological mechanisms are described within the proposed mathematical model.

\subsection{General Conclusion}

It is important to note that our results are more qualitative than quantitative. Therefore, what we propose is not the exact protocol itself but more the concept of alternating CAR-T injections and TMZ cycles as an optimal synergistic combined treatment strategy. Based on our findings, initiating treatment with a CAR-T injection followed by TMZ administration, and then alternating the two treatments, is suggested as the most promising approach. This strategy is known as evolutionary double-bind \cite{gatenby2009lessons}, and it has already emerged as the optimal administration strategy in previous studies \cite{orlando2012cancer,italia2022optimal}.

Our research paves the way for exploring the synergy between TMZ and CAR-T cell therapy in treating gliomas, a currently incurable tumor. We hope that our findings will encourage further mathematical and medical investigations in this area, contributing to the development of optimized, personalized treatment strategies that combine immunotherapy and chemotherapy for brain tumor patients. Through these efforts, we anticipate making significant strides in improving therapeutic outcomes and ultimately advancing the fight against gliomas.

\section*{Acknowledgments}

This article is part of the research project SBPLY/23/180225/000041, funded by the EU through the ERDF and by the JCCM through INNOCAM, Ministerio de Ciencia e Innovaci\'on, Spain (doi:10.13039/501100011033) and University of Castilla-La Mancha grant 2022-GRIN-34405 (Applied Science Projects within the UCLM research programme). M.I and J.B.-B. were partially supported by European Regional Development Fund (ERDF A way of making Europe) under grant PID2022-142341OB-I00. D.S. is partially supported by H2020-MSCACOFUND-2020-101034228-WOLFRAM2 and PIBA-2024-1-0016.

\section*{Data Availability}

Data sharing is not applicable to this article as no new data were created or analyzed in this study.

\section*{Code Availability}
The simulation codes are available on \href{https://github.com/disinel/-CAR-T-cell-therapy-in-combination-with-chemotherapy-for-malignant-gliomas.git}{GitHub}.

\section*{Appendix}

In this appendix, we show the outcomes for the other combined protocols considered in the article.

  The first protocol that we considered here is the 5T1C5T1C protocol. For this protocol, median survival time is 638 days for $v=10^{9}$ and 673 days for $v=2\cdot 10^{9}$. Then, we have gains of $80$ and $115$ days {(see KM curves  and the dynamics for the MVP in Figs. \ref{fig:f14} and \ref{fig:f15})}.

  \begin{figure}[!h]
    \centering
    \includegraphics[width=0.9\linewidth]{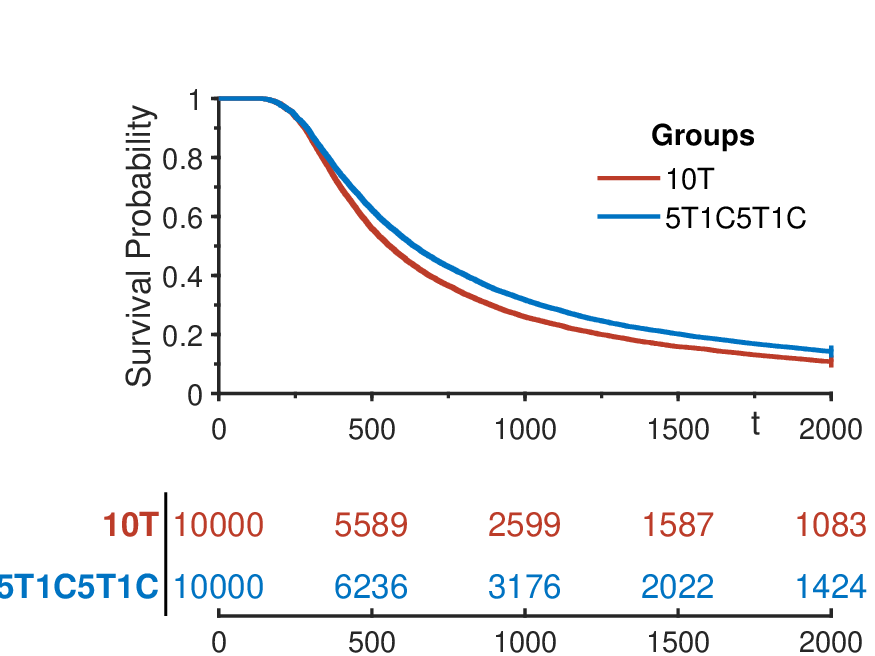}
       \includegraphics[width=0.9\linewidth]{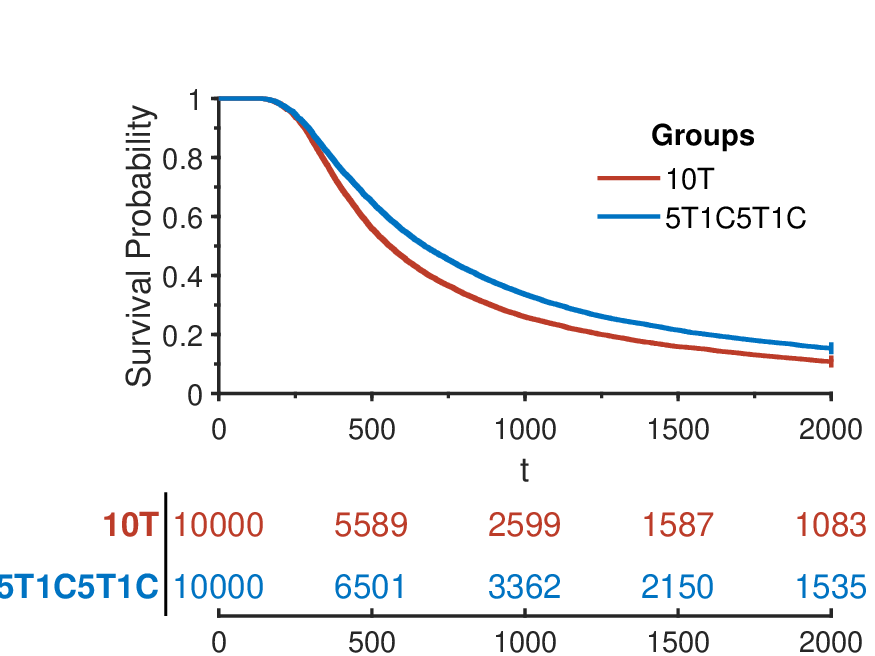}
    \caption{KM curves comparing the 10T and 5T1C5T1C protocols for $v=10^{9}$ (left) and $v=2\cdot 10^{9}$ (right)}
    \label{fig:f14}
\end{figure}

\begin{figure}[!h]
    \centering
    \includegraphics[width=0.9\linewidth]{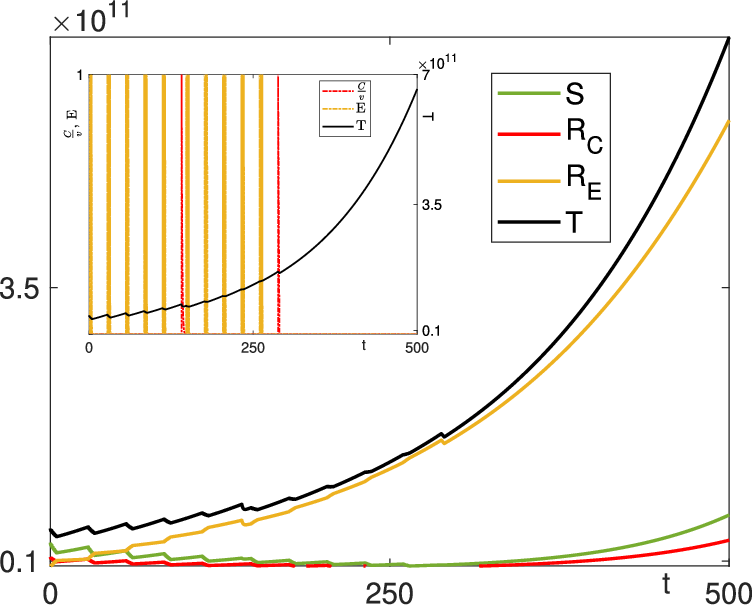}
    \caption{Dynamics for the MVP under the 5T1C5T1C protocol for $v=10^{9}$ }
    \label{fig:f15}
\end{figure}

\begin{figure}[!h]
    \centering
    \includegraphics[width=0.9\linewidth]{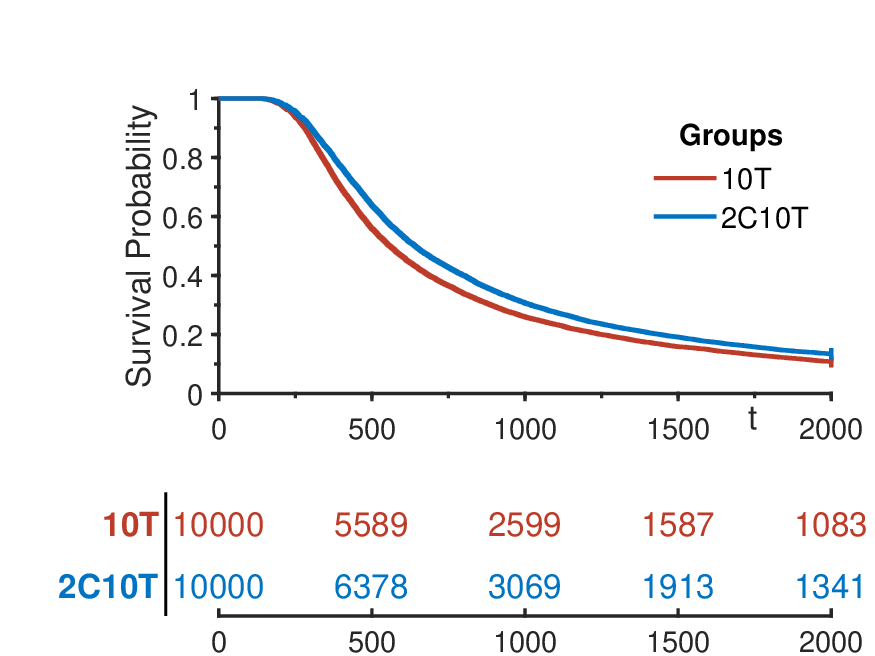}
       \includegraphics[width=0.9\linewidth]{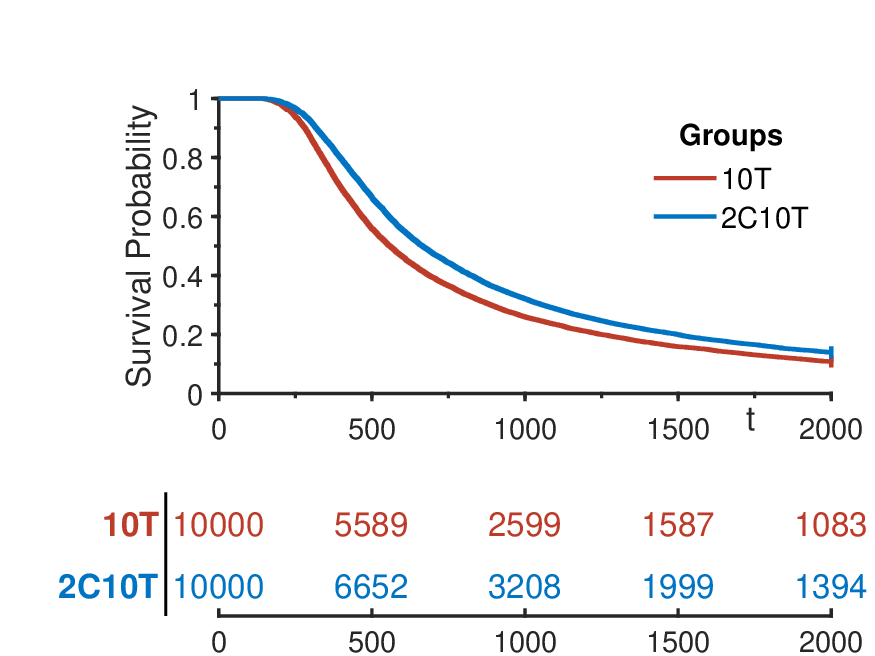}
    \caption{KM curves comparing the 10T and the 2C10T protocols for $2v=10^{9}$ (top) and $2v=2\cdot 10^{9}$ (bottom).}
    \label{fig:f10}
\end{figure}

\begin{figure}[!h]
    \centering
    \includegraphics[width=0.9\linewidth]{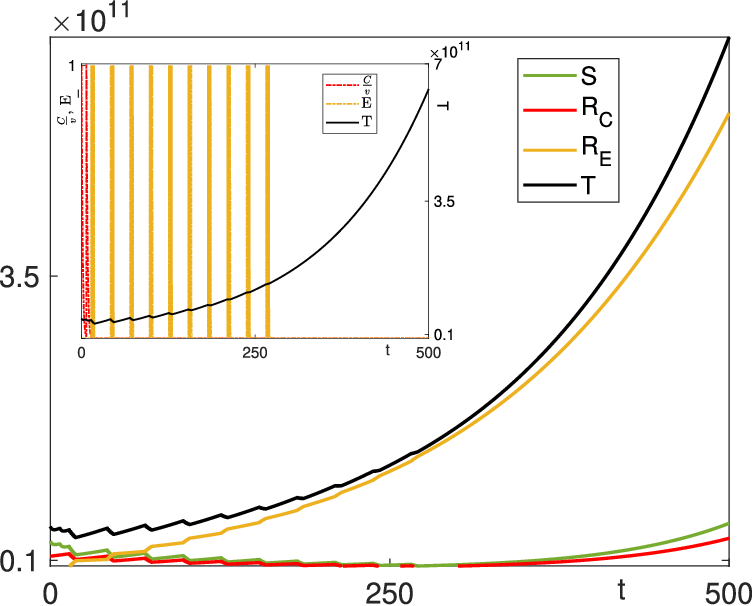}
    \caption{Dynamics for the MVP under 2C10T protocol for $2v=10^{9}$. }
    \label{fig:f11}
\end{figure}

Next, we consider the 2C10T protocol. Its median survival time is 641 days for $L_2v=10^{9}$ and 665 days for $L_2v=2\cdot 10^{9}$. Thus, we have gains of 14.87\% and 19.18\%, i.e., $83$ and $107$ days, respectively (see KM curves in Fig. \ref{fig:f10}). We also present the dynamics of tumor cells for this protocol applied to the MVP {(Fig. \ref{fig:f11})}. This protocol is less efficient in dealing with TMZ-resistant cells.

\begin{figure}[!h]
    \centering
    \includegraphics[width=0.9\linewidth]{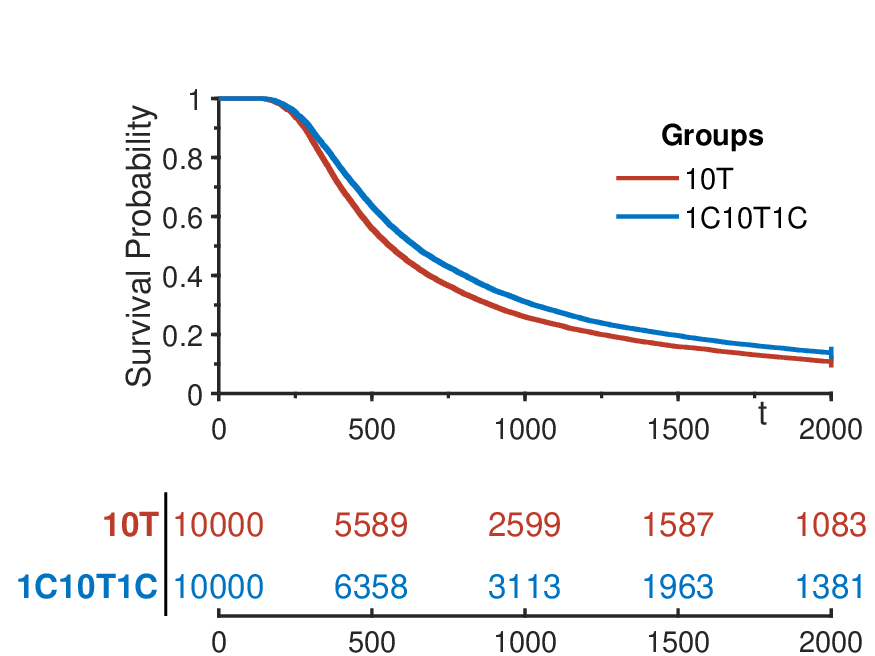}
       \includegraphics[width=0.9\linewidth]{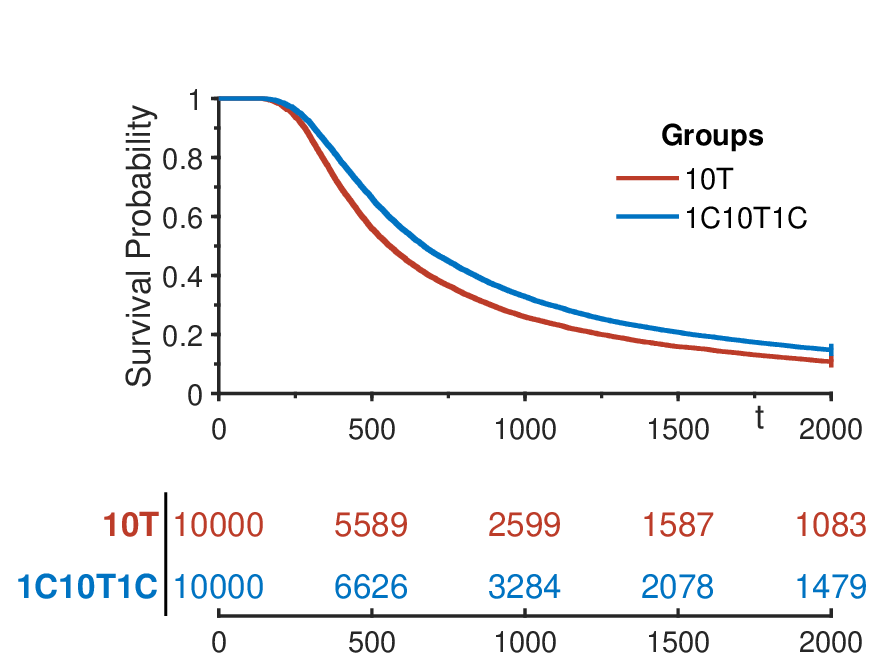}
    \caption{KM curves comparing the 10T and 1C10T1C protocols for $2v=10^{9}$ (top) and $2v=2\cdot 10^{9}$ (bottom).}
    \label{fig:f10a}
\end{figure}

\begin{figure}[!h]
    \centering
    \includegraphics[width=0.9\linewidth]{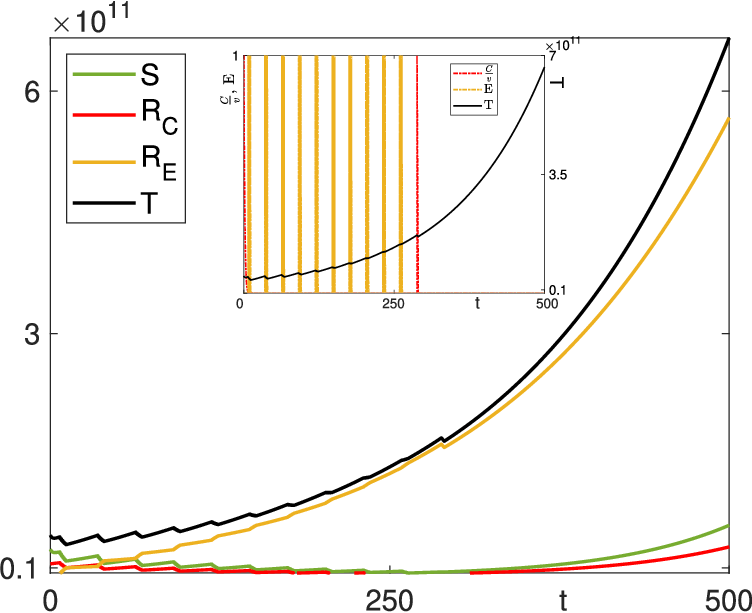}
    \caption{Dynamics for the MVP under the 1C10T1C protocol for $2v=10^{9}$. }
    \label{fig:f11a}
\end{figure}

Another protocol that we consider is the 1C10T1C protocol. Its median survival is 641 and 670 days for $L_2v=10^{9}$ and $L_2v=2\cdot 10^{9}$, respectively. Thus, we have gains of 14.87\% and 20.00\%, i.e., $83$ and $112$ days, respectively (see KM curves in Fig. \ref{fig:f10a}). We also present the dynamics of the model variables for this protocol applied to the MVP in {Fig. \ref{fig:f11a}}: this protocol is less efficient in dealing with TMZ-resistant cells.

Finally, we consider the protocol 10T2C. Median survival time for this protocol is 603 days for $v=10^{9}$ and 630 days for $v=2\cdot 10^{9}$. Thus, we have gains of $45$ and $72$ days {(see Figs. \ref{fig:f16} and \ref{fig:f17})}.

\begin{figure}[!h]
    \centering
    \includegraphics[width=0.9\linewidth]{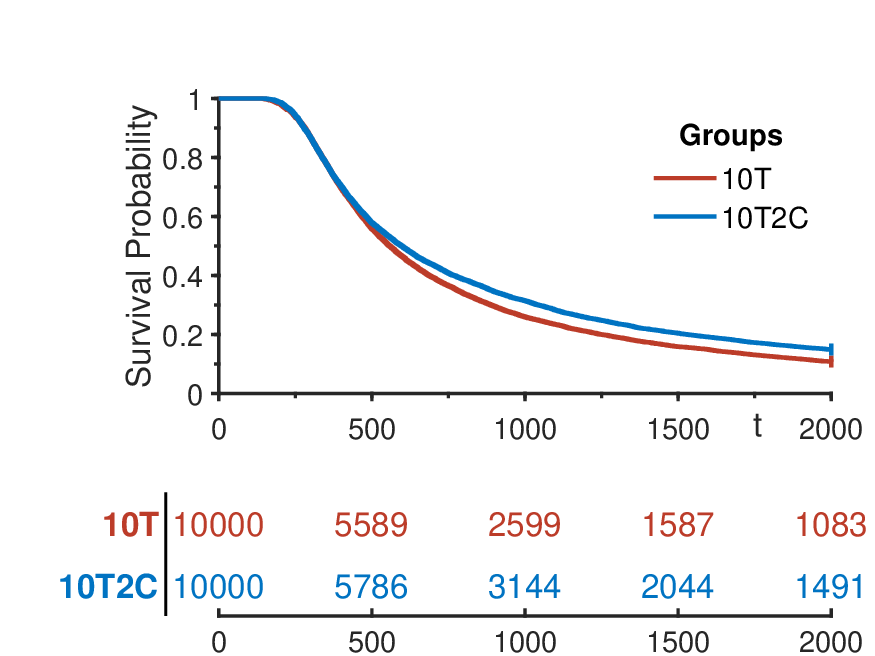}
       \includegraphics[width=0.9\linewidth]{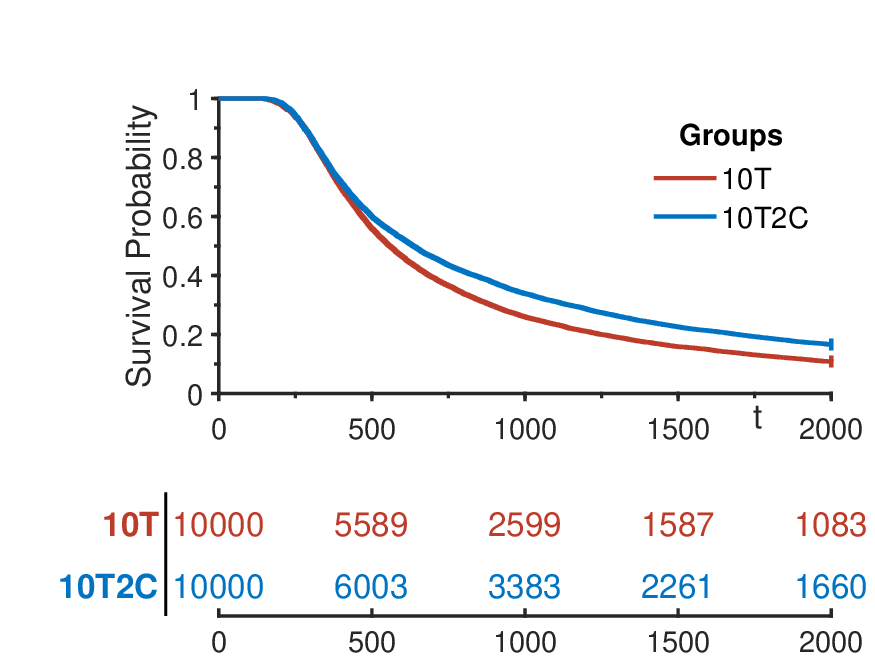}
    \caption{KM curves comparing the 10T and 10T2C protocols for $2v=10^{9}$ (left), $2v=2\cdot 10^{9}$ (right)}
    \label{fig:f16}
\end{figure}

\begin{figure}[!h]
    \centering
    \includegraphics[width=0.9\linewidth]{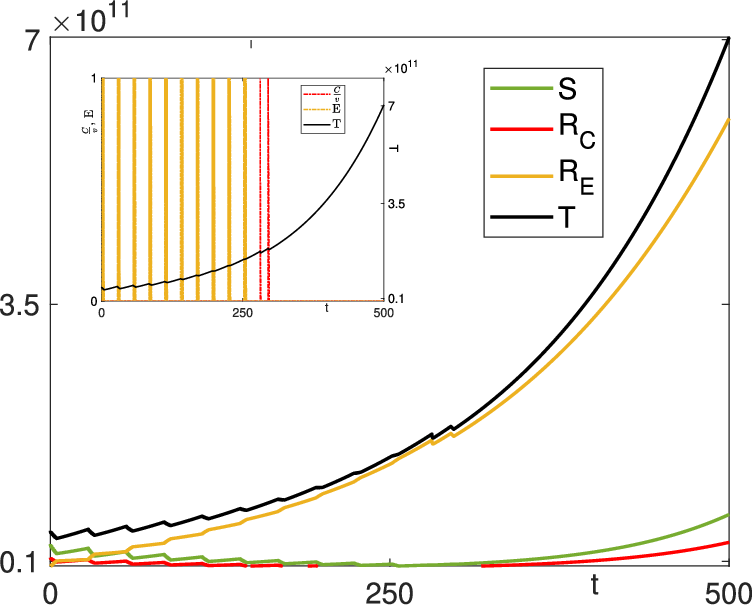}
    \caption{Dynamics for the MVP under the 10T2C protocol for $2v=10^{9}$. }
    \label{fig:f17}
\end{figure}

\section*{References}
\bibliography{bibliog}

\end{document}